\theoremstyle{definition}
\newtheorem{definition}{Definition}[section]
\newtheorem{remark}[definition]{Remark}
\newtheorem{example}[definition]{Example}
\newtheorem{conjecture}[definition]{Conjecture}
\newtheorem{problem}[definition]{Problem}
\theoremstyle{plain}
\newtheorem{theorem}[definition]{Theorem}
\newtheorem{lemma}[definition]{Lemma}
\newtheorem{proposition}[definition]{Proposition}
\numberwithin{equation}{section}
\def\MX{\mathrm{Mat}_X(\mathbb{C})}
\def\<{\langle}
\def\>{\rangle}
\def\Spec{\mathrm{Spec}}
\def\1{\mathds{1}}
\begin{document}
\title{Remarks on pseudo-vertex-transitive graphs with small diameter}

\author[a,b]{Jack H. Koolen}
\author[c]{Jae-Ho Lee
	{\thanks{Corresponding author. \protect \\
	\indent \ \ 
	E-mail: koolen@ustc.edu.cn (J.H. Koolen), jaeho.lee@unf.edu (J.-H. Lee),  tansusan1@ahjzu.edu.cn (Y.-Y Tan)}}}
\author[d]{Ying-Ying Tan}

\affil[a]{\small School of Mathematical Sciences, University of Science and Technology of China, Hefei, Anhui, 230026, PR China.}
\affil[b]{CAS Wu Wen-Tsun Key Laboratory of Mathematics, University of Science and Technology of China, 96 Jinzhai Road, Hefei, Anhui, 230026, PR China.}
\affil[c]{Department of Mathematics and Statistics, University of North Florida, Jacksonville, FL 32224, USA.}
\affil[d]{School of Mathematics $\&$ Physics, Anhui Jianzhu University, Hefei, Anhui, 230201, PR China.}

\date{}

\maketitle

\begin{abstract}
Let $\Gamma$ denote a $Q$-polynomial distance-regular graph with vertex set $X$ and diameter $D$.
Let $A$ denote the adjacency matrix of $\Gamma$.
For a vertex $x\in X$ and for $0 \leq i \leq D$, let $E^*_i(x)$ denote the projection matrix to the $i$th subconstituent space of $\Gamma$ with respect to $x$.
The Terwilliger algebra $T(x)$ of $\Gamma$ with respect to $x$ is the semisimple subalgebra of $\mathrm{Mat}_X(\mathbb{C})$ generated by $A, E^*_0(x), E^*_1(x), \ldots, E^*_D(x)$.
Let $V$ denote a $\mathbb{C}$-vector space consisting of complex column vectors with rows indexed by $X$.
We say $\Gamma$ is pseudo-vertex-transitive whenever for any vertices $x,y \in X$, 
there exists a $\mathbb{C}$-vector space isomorphism $\rho:V\to V$ such that $(\rho A - A \rho)V=0$ and $(\rho E^*_i(x) - E^*_i(y)\rho)V=0$ for all $0\leq i \leq D$.
In this paper, we discuss pseudo-vertex transitivity for distance-regular graphs with diameter $D\in \{2,3,4\}$.
For $D=2$, we show that a strongly regular graph is pseudo-vertex-transitive if and only if all its local graphs have the same spectrum.
For $D = 3$, we consider the Taylor graphs and show that they are pseudo-vertex transitive.
For $D=4$, we consider the antipodal tight graphs and show that they are pseudo-vertex transitive.


\bigskip
\noindent
\textbf{Keywords:} 
Distance-regular graph; Terwilliger algebra; Pseudo-vertex-transitive; Strongly regular graph; Taylor graph; Antipodal tight graph

\hfil\break
\noindent \textbf{2020 Mathematics Subject Classification:}
05E30
\end{abstract}

\section{Introduction}
Let $\Gamma$ denote a $Q$-polynomial distance-regular graph with vertex set $X$ and diameter $D$.
We note that $\Gamma$ can be regarded as a combinatorial analogue of a compact rank one symmetric space \cite[p. 311]{bi}.
We recall the Terwilliger algebra of $\Gamma$.
Let $A$ denote the adjacency matrix of $\Gamma$.
Fix a \emph{base vertex} $x \in X$. 
For $0 \leq i \leq D$, let $E^*_i=E^*_i(x)$ denote the projection matrix onto the $i$th subconstituent space of $\Gamma$ with respect to $x$.
The \emph{Terwilliger algebra} (or \emph{subconstituent algebra}) $T(x)$ of $\Gamma$ with respect to $x$ is the subalgebra of $\MX$ generated by $A$, $E^*_0$, $E^*_1, \ldots, E^*_D$ \cite{ter1}.
Note that $T(x)$ is finite-dimensional and semisimple.
Let $T(y)$ denote the Terwilliger algebra of $\Gamma$ with respect to another base vertex $y\in X$.
We say the Terwilliger algebras $T(x)$ and $T(y)$ are \emph{isomorphic} whenever there exists a $\mathbb{C}$-algebra isomorphism from $T(x)$ to $T(y)$ that sends $A$ to $A$ and $E^*_i$ to $E^*_i(y)$ for $0 \leq i \leq D$.
We remark that the isomorphism class of $T(x)$ may depend on the choice of the base vertex $x$.
For instance, the twisted Grassmann graph, introduced by van Dam and Koolen \cite{2005vDK}, has two orbits of the automorphism group on its vertex set, say $X_1$ and $X_2$. 
For vertices $x\in X_1$ and $y\in X_2$, the Terwilliger algebras $T(x)$ and $T(y)$ of the twisted Grassmann graph are not isomorphic; cf \cite{bfk}. 
Let $V$ denote the $\mathbb{C}$-vector space consisting of complex column vectors with rows indexed by $X$.
Observe that $\MX$ acts on $V$ by left multiplication.
View $V$ as a left module for $T(x)$ and call this the \emph{standard module of $T(x)$} (or \emph{ standard $T(x)$-module}).
Since $T(x)$ is semisimple, $V$ decomposes into a direct sum of irreducible $T(x)$-modules.
We say $\Gamma$ is \emph{pseudo-vertex-transitive} whenever for every pair of vertices $x,y \in X$ there exists a $\mathbb{C}$-vector space isomorphism $\rho: V\to V$ such that $(\rho A - A \rho)V=0$ and $(\rho E^*_i - E^*_i(y)\rho)V=0$ for all $0\leq i \leq d$.
In other words, when $\Gamma$ is pseudo-vertex-transitive, it means that the isomorphism class of the standard $T(x)$-module of $\Gamma$ does not depend on the base vertex $x$; consequently, the multiplicities of irreducible $T(x)$-modules do not depend on the base vertex $x$.

We recall the notion of the thinness of $\Gamma$.
Let $W$ denote an irreducible $T(x)$-module.
Then $W$ is a direct sum of nonzero spaces among $E^*_0W, E^*_1W, \ldots, E^*_DW$, and also a direct sum of nonzero spaces among $E_0W, E_1W, \ldots, E_DW$, where $E_i$ is the $i$th primitive idempotent of $\Gamma$.
We note that the dimension of $E^*_iW$ is at most $1$ for $0 \leq i \leq D$ if and only if the dimension of $E_iW$ is at most $1$ for $0 \leq i \leq D$ \cite[Lemma 3.9]{ter1}; in this case, $W$ is called \emph{thin}.
The graph $\Gamma$ is called \emph{thin} whenever every irreducible $T(x)$-module is thin for every vertex $x\in X$.

In the present paper, we discuss the thinness and pseudo-vertex transitivity of $Q$-polynomial distance-regular graphs with small diameter $D\in \{2,3,4\}$.
For $D=2$, a distance-regular graph is strongly regular. 
We show that pseudo-vertex transitivity of a strongly regular graph is determined by the spectrum of its local graph; cf. Theorem \ref{prop:char_p.v.t}.
For $D\in \{3,4\}$, we discuss only the distance-regular antipodal double covers. 
Let $\Gamma$ denote a distance-regular antipodal double cover with diameter $D$.
If $D=3$, then $\Gamma$ is a Taylor graph. 
We discuss the thinness of $\Gamma$ and show that the isomorphism class of the standard module of the Terwilliger algebra of $\Gamma$ is determined by its intersection numbers; thus, it is pseudo-vertex-transitive; cf. Theorem \ref{thm:D=3}.
If $D=4$, $\Gamma$ is $Q$-polynomial if and only if $\Gamma$ is tight \cite{2002JKDiscMath}. 
We discuss the thinness of $\Gamma$ and show that $\Gamma$ is pseudo-vertex-transitive, provided that $\Gamma$ is tight; cf. Theorem \ref{thm:D=4}.
We note that bipartite and/or antipodal $Q$-polynomial distance-regular graphs are pseudo-vertex-transitive; cf. Proposition \ref{prop:antiDRG pvt}.
In a future paper, we will discuss pseudo-vertex transitivity of general $Q$-polynomial distance-regular graphs with diameter $D\geq 3$.

We remark that the thin property of an antipodal $Q$-polynomial double cover with $D\in \{3,4\}$ plays an important role in determining pseudo-vertex transitivity. 
From this point of view, we could ask the following question: for $D\geq 3$ does the thinness of a $Q$-polynomial distance-regular graph guarantee its pseudo-vertex transitivity?
Recently, Ito and Koolen suggested the following problem:
\begin{problem}[The Ito-Koolen Problem \cite{2020TKCP}]\label{IK problem} 
Classify all thin pseudo-vertex-transitive $Q$-polynomial distance-regular graphs with large enough diameter.
\end{problem}
\noindent
The ultimate goal of our study is to classify thin pseudo-vertex-transitive $Q$-polynomial distance-regular graphs.
The present paper shows that such graphs with small diameter are difficult to classify.

This paper is organized as follows.
Sections \ref{Sec:Prelim} and \ref{sec:Ter_Alg} review some preliminaries concerning distance-regular graphs and the Terwilliger algebra.
Section \ref{Sec:PVT} introduces the notion of a pseudo-vertex-transitive distance-regular graph.
It also contains what it means for two $Q$-polynomial distance-regular graphs to be $T$-isomorphic.
Section \ref{sec:SRG} deals with the thinness of strongly regular graphs and contains a characterization of their pseudo-vertex transitivity. 
It also contains some examples of cospectral strong regular graphs and discusses their pseudo-vertex transitivity.
Section \ref{sec:antipodal 2-cover} reviews some preliminaries concerning the antipodal distance-regular graphs.
Section \ref{sec:D=3} deals with Taylor graphs and their thinness and pseudo-vertex transitivity. 
Section \ref{sec:D=4} discusses the antipodal tight graphs with $D=4$.
The paper ends with a brief summary and directions for future work in Section \ref{Sec:conclusion}.

Throughout the paper, we use the following notation.
Let $X$ denote a nonempty finite set.
Let $\MX$ denote the $\mathbb{C}$-algebra consisting of all complex matrices whose rows and columns are indexed by $X$.
Let $V=\mathbb{C}^X$ denote the $\mathbb{C}$-vector space consisting of all complex column vectors with rows indexed by $X$.
We endow $V$ with the standard Hermitian inner product $\< u,v \>=u^t\overline{v}$ for $u, v \in V$.
We view $V$ as a left module for $\MX$, called the \emph{standard module}.

\section{Preliminaries: distance-regular graphs}\label{Sec:Prelim}
In this section, we recall some definitions and notation.
Let $\Gamma$ denote a finite, simple, undirected, connected graph with vertex set $X$ and diameter $D=\max\{\partial(x,y)\mid x,y \in X\}$, where $\partial$ denotes the path-length distance function for $\Gamma$.
For a vertex $x \in X$, define $\Gamma_i(x) = \{y\in X \mid \partial(x,y)=i\}$ for $0 \leq i \leq D$.
The subgraph $\Delta_i(x)$, $0\leq i \leq D$, of $\Gamma$ induced by $\Gamma_i(x)$ is called the $i$th \emph{subconstituent} of $\Gamma$ with respect to $x$. 
The number $|\Gamma_1(x)|$ is called the \emph{valency} of $x$ in $\Gamma$.
A graph $\Gamma$ is said to be \emph{regular with valency} $k$ (or \emph{$k$-regular}) if each vertex of $\Gamma$ has valency $k$.
We abbreviate $\Delta(x)=\Delta_1(x)$, the first subconstituent of $\Gamma$ with respect to $x$, and call this the \emph{local} graph of $\Gamma$ at $x$.
Let $\lambda$ denote an eigenvalue of $\Delta(x)$. 
We call $\lambda$ a \emph{local eigenvalue} of $\Gamma$ with respect to $x$.
We say a graph $\Gamma$ is \emph{locally} $\Delta$ whenever all local graphs of $\Gamma$ are isomorphic to $\Delta$.

We say $\Gamma$ is \emph{distance-regular} whenever for all integers $0\leq h,i,j \leq D$ and for all vertices $x,y \in X$ with $\partial(x,y)=h$, the number $p^h_{ij} = |\Gamma_i(x)\cap \Gamma_j(y)|$ is independent of $x$ and $y$.
The constants $p^h_{ij}$ are called the \emph{intersection numbers} of $\Gamma$.
We abbreviate $a_i=p^i_{1,i} (0 \leq i \leq D)$, $b_i=p^i_{1,i+1} (0 \leq i \leq D-1)$ and $c_i=p^i_{1, i-1} (1\leq i \leq D)$.
Observe $\Gamma$ is regular with valency $k=b_0$, and $c_i+a_i+b_i=k$ for $0 \leq i \leq D$, where we define $c_0=0$ and $b_D=0$.
The sequence $\{b_0, b_1,\ldots, b_{D-1}; c_1, c_2, \ldots, c_D\}$ is called the \emph{intersection array} of $\Gamma$.
We say $\Gamma$ is \emph{bipartite} whenever $a_i=0$ for $0\leq i \leq D$.

We assume that $\Gamma$ is distance-regular with diameter $D$. 
For $0 \leq i \leq D$, let $A_i$ denote the matrix in $\MX$ defined by 
\begin{equation*}
	(A_i)_{xy} = \begin{cases}
	1 & \quad \text{if} \quad \partial(x,y)=i,\\
	0 & \quad \text{if} \quad \partial(x,y)\ne i,\\
	\end{cases}
	\qquad (x,y  \in X).
\end{equation*}
We call $A_i$ the $i$th \emph{distance matrix} of $\Gamma$.
We abbreviate $A: =A_1$, called the \emph{adjacency matrix} of $\Gamma$.
Observe (i) each $A_i$ is real symmetric; (ii) $A_0=I$; (iii) $\sum^D_{i=0}A_i=J$, the all-ones matrix; (iv) $A_iA_j = \sum^D_{h=0}p^h_{ij}A_h$ $(0\leq i,j \leq D)$.
By these facts, we find that $A_0, A_1,\ldots, A_D$ is a basis for a commutative subalgebra $M$ of $\MX$, which we call the \emph{Bose-Mesner algebra} of $\Gamma$.
It is known that $A$ generates $M$.
The algebra $M$ has a second basis $E_0, E_1, \ldots, E_D$ such that (i) $E_0=|X|^{-1}J$; (ii) $\sum^D_{i=0}E_i=I$; (iii) $E_iE_j=\delta_{ij}E_i$; cf. \cite[p.45]{1989BCN}. 
We call $E_i$ the \emph{$i$th primitive idempotent} of $\Gamma$.
Since $\{E_i\}^D_{i=0}$ is a basis for $M$, there exist complex scalars $\{\theta_i\}^D_{i=0}$ such that $A=\sum^D_{i=0}\theta_i E_i$.
Observe $AE_i=E_iA=\theta_iE_i$ for $0\leq i \leq D$. 
The scalars $\{\theta_i\}^D_{i=0}$ are real \cite[p.197]{bi}, and mutually distinct as $A$ generates $M$. 
We call $\theta_i$ the \emph{eigenvalue} of $\Gamma$ associated with $E_i$ for $0\leq i \leq D$.
Observe $V=E_0V+E_1V+\cdots + E_DV$, an orthogonal direct sum.
For $0\leq i \leq D$, $E_iV$ is the eigenspace of $A$ associated with $\theta_i$. 
For $0\leq i \leq D$, we denote by $m_i$ the rank of $E_i$ and observe $m_i=\dim(E_iV)$.
We call $m_i$ the \emph{multiplicity} of $\theta_i$.
By the \emph{spectrum} of $\Gamma$, we mean the multiset containing its eigenvalues, each with its multiplicity, denoted by $\Spec(\Gamma)=\{\theta^{m_0}_0, \theta^{m_1}_1, \ldots, \theta^{m_D}_D\}$.

We recall the notion of the $Q$-polynomial property of $\Gamma$.
Let $\circ$ denote the entrywise product in $\MX$.
Since $A_i\circ A_j =\delta_{ij}A_i$ $(0 \leq i, j \leq D)$, the Bose-Mesner algebra $M$ is closed under $\circ$.
Since $\{E_i\}^D_{i=0}$ is a basis for $M$, there exist complex scalars $q^h_{ij}$ such that 
\begin{equation*}
E_i\circ E_j = |X|^{-1}\sum^D_{h=0}q^h_{ij}E_h, \quad (0 \leq i,j \leq D).
\end{equation*}
By \cite[p.48, 49]{1989BCN} the scalars $q^h_{ij}$ are real and nonnegative.
The $q^h_{ij}$ are called the \emph{Krein parameters} of $\Gamma$.
We say $\Gamma$ is \emph{$Q$-polynomial} (with respect to the ordering $E_0, E_1, \ldots, E_D$) whenever for all integers $0\leq h,i,j\leq D$, $q^h_{ij}=0$ (resp. $q^h_{ij}\ne0$) if one of $h,i,j$ is greater than (resp. equal to) the sum of the other two \cite[p.235]{1989BCN}.
{
From now on, unless otherwise stated, we assume that a $Q$-polynomial distance-regular graph discussed in this paper has the ordering $E_0, E_1, \ldots, E_D$.
}
For more background information about distance-regular graphs, we refer the reader to \cite{1989BCN, bi, dkt}.

\section{Preliminaries: the Terwilliger algebra}\label{sec:Ter_Alg}
In this section, we recall the Terwilliger algebra of a $Q$-polynomial distance-regular graph.
Let $\Gamma$ denote a $Q$-polynomial distance-regular graph with vertex set $X$ and diameter $D$. 
Fix a vertex $x \in X$. We refer to $x$ as a ``base'' vertex.
For $0 \leq i \leq D$, we define the diagonal matrix $E^*_i=E^*_i(x) \in \MX$ with diagonal entry
\begin{equation*}
	(E^*_i)_{yy} = \begin{cases}
	1 & \quad \text{if} \quad \partial(x,y)=i,\\
	0 & \quad \text{if} \quad \partial(x,y)\ne i,\\
	\end{cases}
	\qquad (y  \in X).
\end{equation*}
We call $E^*_i$ the $i$th \emph{dual primitive idempotent} of $\Gamma$ with respect to $x$.
Observe $\sum^D_{i=0}E^*_i=I$ and $E^*_iE^*_j=\delta_{ij}E^*_i$.
By these facts, $E^*_0, E^*_1, \ldots, E^*_D$ is a basis for a commutative subalgebra $M^*=M^*(x)$ of $\MX$ with respect to $x$, which we call the \emph{dual Bose-Mesner algebra} of $\Gamma$.
Recall the primitive idempotents $\{E_i\}^D_{i=0}$ of $\Gamma$.
For $0 \leq i \leq D$, define the diagonal matrix $A^*_i=A^*_i(x) \in \MX$ with diagonal entry $(A^*_i)_{yy} = |X|(E_i)_{xy}$ for $y\in X$.
By \cite[p.379]{ter1}, $A^*_0, A^*_1, \ldots, A^*_D$ is a basis for $M^*$. 
Moreover $A^*_0=I$ and $A^*_iA^*_j=\sum^D_{h=0} q^h_{ij} A^*_h$.
We call $A^*_i$ the $i$th \emph{dual distance matrix} of $\Gamma$ with respect to $x$.
We abbreviate $A^*=A^*_1$ and call this the \emph{dual adjacency matrix} of $\Gamma$ with respect to $x$.
The matrix $A^*$ generates $M^*$ \cite[Lemma 3.11]{ter1}.
Since $\{E^*_i\}^D_{i=0}$ is a basis for $M^*$, there exist complex scalars $\{\theta^*_i\}^D_{i=0}$ such that $A^*=\sum^D_{i=0}\theta^*_iE^*_i$.
Observe $A^*E^*_i=E^*_iA^*=\theta^*_iE^*_i$ for $0 \leq i \leq D$.
The scalars $\{\theta^*_i\}^D_{i=0}$ are real \cite[Lemma 3.11]{ter1} and mutually distinct.
We call $\theta^*_i$ the \emph{dual eigenvalue} of $\Gamma$ associated with $E^*_i$.
Observe $V=E^*_0V+E^*_1V+\cdots +E^*_DV$, an orthogonal direct sum.
For $0 \leq i \leq D$, $E^*_iV$ is the eigenspace of $A^*$ associated with $\theta^*_i$, which we call the $i$th \emph{subconstituent space} of $\Gamma$ with respect to $x$.

Recall the Bose-Mesner algebra $M$ of $\Gamma$ and the dual Bose-Mesner algebra  $M^*(x)$ of $\Gamma$ with respect to $x$.
Let $T=T(x)$ denote the subalgebra of $\MX$ generated by $M$ and $M^*$.
We call $T$ the \emph{Terwilliger algebra} (or \emph{subconstituent algebra}) of $\Gamma$ with respect to $x$ \cite{ter1}.
Note that $A$ and $A^*$ generate $T$.
The algebra $T$ is finite-dimensional and noncommutative.
The algebra $T$ is semisimple since it is closed under the conjugate-transpose map.
By \cite[Lemma 3.2]{ter1}, we have the following relations in $T$. 
For $0 \leq i,j \leq D$,
\begin{align*}
	E^*_iAE^*_j = 0 \quad \text{if } \quad |i-j|>1,\\
	E_iA^*E_j = 0 \quad \text{if } \quad |i-j|>1.
\end{align*} 

By a $T$-module, we mean a subspace $W$ of $V$ such that $BW\subseteq W$ for all $B\in T$.
Observe that $V$ is a $T$-module, called the \emph{standard module of $T$} (or \emph{standard $T$-module}).
A $T$-module $W$ is called \emph{irreducible} if $W\ne 0$ and $W$ contains no $T$-modules other than $0$ and $W$.
Two $T$-modules $W$, $W'$ are \emph{isomorphic} if there exists a $\mathbb{C}$-vector space isomorphism $\sigma:W \to W'$ such that
\begin{equation*}
	(\sigma B - B\sigma)W=0,
\end{equation*}
for all $B \in T$.
Let $W$ be a $T$-module and let $U$ be a $T$-submodule of $W$. 
The orthogonal complement of $U$ in $W$ is a $T$-module since $T$ is closed under the conjugate transpose map.
It follows that $W$ decomposes into an orthogonal direct sum of irreducible $T$-modules.
We observe that $V$ decomposes an orthogonal direct sum of irreducible $T$-modules; choose one of the irreducible $T$-modules in this decomposition of $V$, denoted by $W$.
By the \emph{multiplicity} of $W$, we mean the number of irreducible $T$-modules in this decomposition which are isomorphic to $W$ as $T$-modules.

Let $W$ be an irreducible $T$-module.
Then $W$ decomposes into a direct sum of nonzero spaces among $E^*_0W, E^*_1W, \ldots, E^*_DW$, and also a direct sum of nonzero spaces among $E_0W, E_1W, \ldots, E_DW$.
By the \emph{endpoint} of $W$, we mean $\min\{i \mid 0 \leq i \leq D, E^*_iW\ne 0\}$.
By the \emph{dual endpoint} of $W$, we mean $\min\{i \mid 0 \leq i \leq D, E_iW\ne 0\}$.
By the \emph{diameter} of $W$, we mean $|\{i \mid 0 \leq i \leq D, E^*_iW\ne 0\}|-1$.
Let $r$ denote the endpoint of $W$ and $d$ the diameter of $W$.
By \cite[Lemma 3.9]{ter1},  $E^*_iW \ne 0$  if and only if $r \leq i \leq r+d$, and $W=\sum^d_{h=0}E^*_{r+h}W$, an orthogonal direct sum; in addition, $\dim(E^*_iW)\leq 1$ for $0 \leq i \leq D$ if and only if $\dim(E_iW)\leq 1$ for $0 \leq i \leq D$.
An irreducible $T$-module $W$ is said to be \emph{thin} whenever $\dim(E^*_iW)\leq 1$ for $0 \leq i \leq D$.
There exists a unique thin irreducible $T$-module with endpoint $0$ and diameter $D$, which we call the \emph{primary} $T$-module.
Let $W$ denote the primary $T$-module. 
Let $v_0, v_1, \ldots, v_D$ be a sequence of vectors of $W$, not all zero.
This sequence is said to be a \emph{standard basis} for $W$ whenever both (i) $v_i \in E^*_iW$ for $0 \leq i \leq D$; and (ii) $\sum^D_{i=0} v_i \in E_0W$.
For instance, the sequence $E^*_0\mathds{1}, E^*_1\mathds{1}, \ldots ,E^*_D\mathds{1}$ is a standard basis for $W$\cite[Lemma 3.6]{ter1}, where $\mathds{1}$ is the all-ones vector.
We note that the action of $A$ on a standard basis for $W$ is given by
\begin{equation}\label{action: 3-term}
	Av_i = b_{i-1}v_{i-1} + a_iv_i + c_{i+1}v_{i+1} \qquad (0 \leq i \leq D),
\end{equation}
where $a_i, b_i, c_i$ are the intersection numbers of $\Gamma$ and $b_{-1}v_{-1}:=0$ and $c_{D+1}v_{D+1}:=0$.

Let $W$ be a thin irreducible $T$-module with endpoint $r$ and diameter $d$. 
We recall the actions of $A$ and $A^*$ on $W$.
Take a nonzero vector $v_0 \in E^*_rW$. 
For $1 \leq i \leq d$, define $v_i = E^*_{r+i} A v_{i-1}$ in $E^*_{r+i}W$.
Observe that the vector $v_i$ is a basis for $E^*_{r+i}W$ for each $0 \leq i \leq d$, and thus $v_0, v_1, \ldots, v_d$ is an orthogonal basis for $W$.
Define the scalars $a_i(W), 0 \leq i \leq d,$ and $x_i(W), 1\leq i \leq d$, by
\begin{equation}\label{scalars:ai,wi}
	a_i(W) = \mathrm{trace}(E^*_{r+i}A |_{W} ), \qquad x_i(W) = \mathrm{trace}(E^*_{r+i}AE^*_{r+i-1}A |_{W}),
\end{equation}
where $B|_{W}$ denotes the restriction of $B$ to $W$.
Let $\theta_i$ be the eigenvalue of $\Gamma$ associated with $E_i$ for $0 \leq i \leq D$.
Note that (cf. \cite[Lemma 5.10]{2010Cerzo})
\begin{equation}\label{sum:ai=thi}
	\sum^d_{i=0} a_i(W) = \sum^d_{i=0} \theta_{t+i},
\end{equation}
where $t$ is the dual endpoint of $W$.
For $0 \leq i \leq d$, the action of $A$ on $v_i$ is given as
\begin{equation}\label{eq:action Av_i}
	Av_i = v_{i+1} + a_i(W)v_i + x_i(W)v_{i-1},
\end{equation}
where $v_{d+1}:=0$ and $x_0(W)v_{-1}:=0$; cf. \cite[Theorem 5.7]{2010Cerzo}.
The action of $A^*$ on $v_i$ is given as  $A^*v_i = \theta^*_{r+i} v_i$,
where $\theta^*_j$ is the dual eigenvalue of $\Gamma$ associated with $E^*_j$.

The graph $\Gamma$ is said to be \emph{thin with respect to $x$} whenever every irreducible $T(x)$-module is thin.
The graph $\Gamma$ is said to be \emph{thin} whenever $\Gamma$ is thin with respect to every vertex $x$ of $\Gamma$.
See \cite[Section 6]{ter3} for examples of thin $Q$-polynomial distance-regular graphs.

Recall the local graph $\Delta(x)$ of $\Gamma$ at $x\in X$.
Observe that $\Delta(x)$ has $b_0=k$ vertices and is $a_1$-regular.
Let $a_1=\lambda_1\geq \lambda_2\geq \cdots \geq \lambda_k$ denote the local eigenvalues of $\Gamma$ with respect to $x$.
The spectrum of $\Delta(x)$ is called the \emph{local spectrum} of $\Gamma$ at $x$.
Let $W$ denote a thin irreducible $T$-module with endpoint $1$.
We observe that $E^*_1W$ is a one-dimensional eigenspace for $E^*_1AE^*_1$ with corresponding eigenvalue, say $\lambda$. 
Note that $\lambda$ is one of $\lambda_2, \ldots, \lambda_k$.
We call $\lambda$ the \emph{local eigenvalue} of $W$.

We finish this section with a comment, which will be useful later when calculating the dimension of the Terwilliger algebra.
\begin{proposition}\label{Prop:Wedderburn}
Let $T=T(x)$ be the Terwilliger algebra of $\Gamma$ with respect to $x$ and $V$ the standard $T$-module.
Suppose that $V$ has exactly $r$ non-isomorphic irreducible $T$-modules $W_{11}, W_{21}, \ldots, W_{r1}$ with $\dim(W_{i1})=n_i$.
For $1\leq i \leq r$, let $m_i$ be the multiplicity of $W_{i1}$.
Then $V$ decomposes into a direct sum of irreducible $T$-modules:
$$
	V = \bigoplus^{r}_{i=1} (\bigoplus^{m_i}_{j=1}W_{ij}),
$$
where $W_{ij}$ and $W_{i'j'}$ are isomorphic as $T$-modules if and only if $i=i'$.
The algebra $T$ is isomorphic to the semisimple algebra
$$
	\mathrm{Mat}_{n_1}(\mathbb{C}) \oplus \cdots \oplus \mathrm{Mat}_{n_r}(\mathbb{C})
$$
and 
\begin{equation}\label{T:dim formula}
	\dim(T) = \sum^r_{i=1}n^2_i.
\end{equation}
\end{proposition}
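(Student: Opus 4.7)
The plan is to invoke the Wedderburn--Artin structure theorem for finite-dimensional semisimple algebras over $\mathbb{C}$. The proposition is essentially a direct application of this classical fact, since it was already established earlier in the excerpt that $T$ is finite-dimensional and is semisimple (because it is closed under the conjugate-transpose map).

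First, I would decompose the standard module $V$ into an orthogonal direct sum of irreducible $T$-submodules; this is available because $T$ is closed under the conjugate-transpose map, so the orthogonal complement of any $T$-submodule is again a $T$-submodule. The relation ``isomorphic as a $T$-module'' is an equivalence relation on the collection of summands in this decomposition, and I would use it to partition them into exactly $r$ isomorphism classes, fixing a representative $W_{i1}$ in each class. Gathering the $m_i$ summands isomorphic to $W_{i1}$ into an isotypic component yields the displayed decomposition $V=\bigoplus_{i=1}^r\bigoplus_{j=1}^{m_i}W_{ij}$, and the statement that $W_{ij}\cong W_{i'j'}$ iff $i=i'$ is automatic from this grouping.

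Next I would identify the Wedderburn decomposition. Since $\mathbb{C}$ is algebraically closed, Schur's lemma gives $\mathrm{End}_T(W_{i1})\cong \mathbb{C}$. Each isotypic component $V_i:=\bigoplus_{j=1}^{m_i}W_{ij}$ is $T$-stable, so $T$ maps into $\bigoplus_{i=1}^r \mathrm{End}_T(V_i)^{\mathrm{op}}$-centralized image; by the Jacobson density theorem (or equivalently by the Wedderburn--Artin theorem applied directly to the simple quotients of $T$), the image of $T$ acting on $V_i$ is all of $\mathrm{End}_{\mathbb{C}}(W_{i1})\cong \mathrm{Mat}_{n_i}(\mathbb{C})$, acting diagonally on the $m_i$ copies. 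Because $V_1,\ldots,V_r$ correspond to pairwise non-isomorphic simple $T$-modules, the kernels of the projections $T\to\mathrm{End}_T(V_i)^{\mathrm{op}}$ image intersect trivially, and the induced map $T\to \bigoplus_{i=1}^r\mathrm{Mat}_{n_i}(\mathbb{C})$ is a $\mathbb{C}$-algebra isomorphism. The dimension formula $\dim(T)=\sum_{i=1}^r n_i^2$ follows at once by counting dimensions of the summands.

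No substantive graph-theoretic or combinatorial input is required; the only ``obstacle'' is correctly citing the structure theorem and Schur's lemma, together with the observation that the multiplicity $m_i$ appearing in the standard module decomposition coincides with the multiplicity parameter in the isotypic sum. A standard algebra reference (or the original Terwilliger paper \cite{ter1}) suffices for the proof.
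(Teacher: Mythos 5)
Your proposal is correct and is essentially the same as the paper's proof, which simply states ``Follows from Wedderburn's theory'' with a citation to Curtis--Reiner; you have merely spelled out the standard details (orthogonal decomposition via closure under conjugate-transpose, Schur's lemma over $\mathbb{C}$, faithfulness of the action on $V$, and the Wedderburn--Artin isomorphism). No gaps.
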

\begin{proof}
Follows from Wedderburn's theory \cite{1962CurtisReiner}.
\end{proof}

\section{Pseudo-vertex transitivity}\label{Sec:PVT}
In this section, we recall the notion of a pseudo-vertex-transitive graph.
Throughout this section, we denote by $\Gamma$ a $Q$-polynomial distance-regular graph with vertex set $X$ and diameter $D$.
Let $A$ denote the adjacency matrix of $\Gamma$.
For $x \in X$, let $A^*(x)$ denote the dual adjacency matrix of $\Gamma$ with respect to $x$ and let $T(x)$ denote the Terwilliger algebra of $\Gamma$ with respect to $x$.
Recall the standard module $V=\mathbb{C}^X$.
We now give the following definition.

\begin{definition}\label{pvt}
The graph $\Gamma$ is said to be \emph{pseudo-vertex-transitive} whenever for every pair of vertices $x,y \in X$, there exists a $\mathbb{C}$-vector space isomorphism $\rho: V\to V$ such that 
\begin{equation}\label{pvt:eq}
	(\rho A-A \rho)V=0, \qquad (\rho A^*(x)-A^*(y) \rho)V=0.
\end{equation}
Suppose that for any two vertices $x,y \in X$ there exists a $\mathbb{C}$-vector space isomorphism $\rho: V\to V$ satisfying \eqref{pvt:eq}.
Then we say that the standard $T(x)$-module and the standard $T(y)$-module are \emph{isomorphic}.
\end{definition}

\begin{lemma}\label{lem:pvt [A]_B1=[A]_B2}
$\Gamma$ is pseudo-vertex-transitive if and only if for every pair of vertices $x,y \in X$ there exist ordered bases $\mathcal{B}_x$ and $\mathcal{B}_y$ for $V$ such that the matrix representing $A$ (resp. $A^*(x)$) with respect to $\mathcal{B}_x$ is equal to the matrix representing $A$ (resp. $A^*(y)$) with respect to $\mathcal{B}_y$.
\end{lemma}
\begin{proof}
Immediate from Definition \ref{pvt}.
\end{proof}

For any two vertices $x, y \in X$, consider the Terwilliger algebras $T(x)$ and $T(y)$ of $\Gamma$.
We say $T(x)$ and $T(y)$ are \emph{isomorphic} whenever there exists a $\mathbb{C}$-algebra isomorphism from $T(x)$ to $T(y)$ that sends $A \mapsto A$ and ${A^*}(x) \mapsto {A^*}(y)$.
By Lemma \ref{lem:pvt [A]_B1=[A]_B2}, $T(x)$ and $T(y)$ are isomorphic, provided that $\Gamma$ is pseudo-vertex-transitive.
That is, the isomorphism class of the Terwilliger algebra of a pseudo-vertex-transitive $\Gamma$ does not depend on the base vertex.

\begin{remark}
(i) Let $\mathrm{Aut}(\Gamma)$ denote the automorphism group of $\Gamma$.
If $\mathrm{Aut}(\Gamma)$ has a single group orbit on $X$, then $\Gamma$ is pseudo-vertex-transitive. 
From this, it follows that every vertex-transitive graph is pseudo-vertex-transitive.
The converse of this statement is not true; see Example \ref{ex:GQ} and Remark \ref{rmk:Taylor graphs}.\\
(ii) The twisted Grassmann graph $\tilde J_q(2D+1,D)$ is not vertex-transitive; it has two orbits of the automorphism group on its vertex set, say $X_1$ and $X_2$.
In \cite{bfk}, Bang \textit{et al.} showed that all irreducible modules for the Terwilliger algebra $T(x)$ of $\tilde J_q(2D+1,D)$ with respect to $x \in X_2$ are thin, and there are non-thin irreducible modules for the Terwilliger algebra $T(y)$ of $\tilde J_q(2D+1,D)$ with respect to $y \in X_1$.
It implies that $\tilde J_q(2D+1,D)$ is not pseudo-vertex-transitive.
We remark that Tanaka and Wang determined all irreducible $T(x)$-modules of $\tilde J_q(2D+1,D)$ for the thin case; cf \cite{2020TanakaWang}. \\
(iii) With reference to (ii), let $x \in X_2$ be a base vertex of $\tilde J_q(2D+1,D)$ and let $T(x)$ be the Terwilliger algebra of $\tilde J_q(2D+1,D)$. 
We recall the Grassmann graph $J_q(2D+1,D)$.
We note that $J_q(2D+1,D)$ is thin \cite{ter3}.
For a base vertex $y$ of $J_q(2D+1,D)$, let $T(y)$ be the Terwilliger algebra of $J_q(2D+1,D)$. 
The local eigenvalues of $\tilde J_q(2D+1,D)$ at $x$ equal to the local eigenvalues of $J_q(2D+1,D)$ at $y$. From this, it follows that the isomorphism classes of the irreducible $T(x)$-modules with endpoint $1$ are the same as the isomorphism classes of the irreducible $T(y)$-modules with endpoint 1. However, their multiplicities are different; cf. \cite{bfk}.
In addition, it turns out that the Terwilliger algebras $T(x)$ and $T(y)$ are not isomorphic to each other; cf. \cite{2020TanakaWang}. 
\end{remark}

Suppose that $\Gamma$ is pseudo-vertex-transitive.
For $x \in X$ and for $0 \leq i \leq D$, let $\Delta_i(x)$ denote the $i$th subconstituent of $\Gamma$ with respect to $x$.
Then the spectrum of $\Delta_i(x)$ is determined by the characteristic polynomial for $E^*_i(x)AE^*_i(x)$.
Let $y$ be another vertex in $X$.
Since $\Gamma$ is pseudo-vertex-transitive, by Lemma \ref{lem:pvt [A]_B1=[A]_B2} there are ordered bases $\mathcal{B}_x$ and $\mathcal{B}_y$ for $V$ such that the matrix representing $E^*_i(x)AE^*_i(x)$ with respect to $\mathcal{B}_x$ is equal to the matrix representing $E^*_i(y)AE^*_i(y)$ with respect to $\mathcal{B}_y$. 
It follows that the characteristic polynomial for $E^*_i(x)AE^*_i(x)$ is equal to the characteristic polynomial for $E^*_i(y)AE^*_i(y)$. 
Therefore we obtain the following lemma.
\begin{lemma}\label{pvt=>Spec(D1)=Spec(D2)}
Suppose that $\Gamma$ is pseudo-vertex-transitive. 
Then for $0 \leq i \leq D$ and for all vertices $x,y \in X$, the spectrum of $\Delta_i(x)$ and the spectrum of $\Delta_i(y)$ are equal to each other.
\end{lemma}

Next, we generalize the concept of pseudo-vertex transitivity.
Let $\Gamma'$ denote a $Q$-polynomial distance-regular graph with vertex set $X'$ and diameter $D'$.
Fix a vertex $x' \in X'$.
Denote by $A'$ the adjacency matrix of $\Gamma'$ and ${A^*}'(x')$ the dual adjacency matrix of $\Gamma'$ with respect to $x'$.
Let $T'(x')$ be the Terwilliger algebra of $\Gamma'$ with respect to $x'$.
Then $T(x)$ and $T'(x')$ are said to be \emph{$T$-algebra isomorphic} whenever $D=D'$ and $|X|=|X'|$ and there exists a $\mathbb{C}$-algebra isomorphism from $T(x)$ to $T'(x')$ that sends $A \mapsto A'$ and ${A^*}(x) \mapsto {A^*}'(x')$.
In this case, such an algebra isomorphism is called a \emph{$T$-algebra isomorphism}.
Let $W$ denote a $T(x)$-module of $\Gamma$ and let $W'$ denote a $T'(x')$-module of $\Gamma'$.
Then $W$ and $W'$ are said to be \emph{$T$-isomorphic} whenever (i) there exists a $T$-algebra isomorphism $f:T(x) \to T'(x')$; and (ii) there exists a $\mathbb{C}$-vector space isomorphism $\rho:W \to W'$ such that $(\rho A - A \rho)W=0$ and $(\rho A^*(x) - {A^*}'(x') \rho)W=0$.
Recall $V=\mathbb{C}^X$ the standard $T(x)$-module of $\Gamma$.
Let $V'=\mathbb{C}^{X'}$ denote the standard $T'(x')$-module of $\Gamma'$.
With the above notation, we give the following definition.

\begin{definition}\label{T-isomorphic}
Two $Q$-polynomial distance-regular graphs $\Gamma$ and $\Gamma'$ are said to be \emph{$T$-isomorphic} whenever for every pair of vertices $x\in X$ and $x' \in X'$, 
(i) there exists a $T$-algebra isomorphism $f:T(x) \to T'(x')$; and 
(ii) there exists a $\mathbb{C}$-vector space isomorphism $\rho:V \to V'$ such that 
$(\rho B - f(B) \rho)V=0$
for all $B\in T(x)$; that is, the diagram
\begin{equation*}
	\begin{tikzcd}
	V \arrow{r}{\rho} \arrow[swap]{d}{B} & V' \arrow{d}{f(B)} \\
	V \arrow{r}{\rho} & V'
	\end{tikzcd}
\end{equation*}
commutes for all $B\in T(x)$.
\end{definition}

\begin{lemma}\label{lem:[A]_B=[A']_B'}
With the above notation, the following (i)-(iii) are equivalent.
\begin{itemize}[itemsep=0pt]
	\item[(i)] $\Gamma$ and $\Gamma'$ are $T$-isomorphic.
	\item[(ii)] For any vertices $x\in X$ and $x' \in X'$, there exists a $\mathbb{C}$-vector space isomorphism $\rho:V \to V'$ such that 
$$(\rho A - A' \rho)V=0, \qquad (\rho A^*(x) - {A^*}'(x') \rho)V=0.$$
	That is, the standard $T(x)$-module $V$ and the standard $T'(x')$-module $V'$ are $T$-isomorphic.
	\item[(iii)] There exist ordered bases $\mathcal{B}$ for $V$ and $\mathcal{B}'$ for $V'$ such that the matrix representing $A$ $($resp. $A^*(x)$$)$ with respect to $\mathcal{B}$ is equal to the matrix representing $A'$ $($resp. $A^{*'}(x')$$)$ with respect to $\mathcal{B}'$.
\end{itemize}
\end{lemma}
\begin{proof}
Routine. 
\end{proof}

\begin{lemma}\label{lem: G G' T-iso => int array}
If $\Gamma$ and $\Gamma'$ are $T$-isomorphic, then the intersection array of $\Gamma$ is equal to the intersection array of $\Gamma'$.
\end{lemma}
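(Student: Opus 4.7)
The strategy is to exploit the primary $T$-module, on which $A$ acts in the standard basis $v_i := E^*_i(x)\mathds{1}$ via the tridiagonal recurrence \eqref{action: 3-term} whose entries are exactly the intersection numbers of $\Gamma$. It will suffice to show that the vector space isomorphism $\rho : V \to V'$ coming from the definition of $T$-isomorphism carries each $v_i$ to a common scalar multiple of $v'_i := E^{*'}_i(x')\mathds{1}'$; then reading off the tridiagonal recurrences on the two sides will force the intersection numbers to agree.

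Fix base vertices $x \in X$ and $x' \in X'$, and let $f : T(x) \to T'(x')$ and $\rho : V \to V'$ witness the $T$-isomorphism, so $\rho B = f(B)\rho$ for all $B \in T(x)$. The relation $\rho A = A'\rho$ shows that $A$ and $A'$ are similar and hence cospectral, so in particular $k = k'$. Applying $\rho$ to $A\mathds{1} = k\mathds{1}$ gives $A'\rho\mathds{1} = k\rho\mathds{1}$, and since $\mathds{1}'$ spans the simple eigenspace of $A'$ for the valency, $\rho\mathds{1} = c\mathds{1}'$ for some nonzero $c$. Next, $f$ restricts to an algebra isomorphism $M^*(x) \to M^{*'}(x')$ sending $A^*(x) \mapsto A^{*'}(x')$, so it maps the primitive idempotent $E^*_i(x)$ to the spectral projection of $A^{*'}(x')$ onto the $\theta^*_i$-eigenspace; using that the $Q$-polynomial orderings are fixed, this yields $f(E^*_i(x)) = E^{*'}_i(x')$ for $0 \leq i \leq D$. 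Hence $\rho v_i = f(E^*_i(x))\rho\mathds{1} = c\,v'_i$.

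With $\rho v_i = c v'_i$ in hand, I apply $\rho$ to \eqref{action: 3-term} in two ways: on the one hand, $\rho(Av_i) = A'\rho v_i = cA'v'_i = c\bigl(b'_{i-1}v'_{i-1} + a'_iv'_i + c'_{i+1}v'_{i+1}\bigr)$ by \eqref{action: 3-term} applied to $\Gamma'$, while $\mathbb{C}$-linearity of $\rho$ gives $\rho(Av_i) = c\bigl(b_{i-1}v'_{i-1} + a_iv'_i + c_{i+1}v'_{i+1}\bigr)$. Since $v'_{i-1}, v'_i, v'_{i+1}$ lie in distinct subconstituent spaces of $V'$, they are linearly independent, so matching coefficients yields $a_i = a'_i$, $b_{i-1} = b'_{i-1}$, and $c_{i+1} = c'_{i+1}$ for all admissible $i$, and the intersection arrays coincide.

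The main obstacle is the indexing step $f(E^*_i(x)) = E^{*'}_i(x')$: a priori $f$ could permute the dual primitive idempotents via some $\sigma$ with $f(E^*_i) = E^{*'}_{\sigma(i)}$, and the tridiagonal relations $E^*_iAE^*_j = 0$ for $|i-j|>1$ only constrain $\sigma$ to be either the identity or the reversal $i \mapsto D-i$. The reversal case, however, would produce by the same computation the reversed relations $a'_i = a_{D-i}$, $b'_i = c_{D-i}$, $c'_i = b_{D-i}$; comparing valencies $b_0 = k = k' = b'_0$ then forces $k = c_D$, which entails $a_D = 0$ and a palindromic intersection array, so the arrays still coincide. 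Hence the conclusion holds in either case.
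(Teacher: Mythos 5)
Your main line of argument is essentially the paper's: both proofs restrict to the primary module, transport the standard basis $\{E^*_i(x)\mathds{1}\}_{i=0}^{D}$ through $\rho$ (after checking that $\rho\mathds{1}$ is a scalar multiple of $\mathds{1}'$), and read the intersection numbers off the tridiagonal recurrence \eqref{action: 3-term}. Your explicit verification that $\rho\mathds{1}=c\mathds{1}'$ and the coefficient matching are fine, and you deserve credit for flagging the indexing question $f(E^*_i(x))\overset{?}{=}E^{*'}_i(x')$, which the paper's proof passes over silently; your observation that the support pattern of the $E^*_iAE^*_j$ pins the permutation down to the identity or the reversal is also correct.

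Your disposal of the reversal case, however, has a genuine gap. From $b'_0=b_0$ you correctly deduce $c_D=k$, equivalently $a_D=0$, but this does \emph{not} entail that the intersection array of $\Gamma$ is palindromic: $K_{3,3}$ has array $\{3,2;1,3\}$, so $c_2=k$ while $b_1=2\neq 1=c_1$. What actually closes this case is a rank count rather than a valency comparison: since $\rho E^*_0(x)=f(E^*_0(x))\rho=E^{*'}_D(x')\rho$ with $\rho$ invertible, you get $1=\dim E^*_0(x)V=\dim E^{*'}_D(x')V'=k'_D$, and symmetrically $k_D=1$; hence both graphs are antipodal double covers, for which $b_i=c_{D-i}$ for all $i$ (as recalled in Section \ref{sec:antipodal 2-cover}), and then your reversed relations $b'_j=c_{D-j}$ and $c'_j=b_{D-j}$ do reduce to $b'_j=b_j$ and $c'_j=c_j$. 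With that repair the proof is complete and agrees with the paper's in all essentials.
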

\begin{proof}
Fix vertices $x\in X$ and $x' \in X'$.
Let $V$ denote the standard $T(x)$-module of $\Gamma$ and $V'$ the standard $T'(x')$-module of $\Gamma'$.
Write $T=T(x)$, $T'=T'(x')$, $E^*_i=E^*_i(x)$, $E^{*'}_i=E^{*'}_i(x')$ for $0\leq i \leq D$.
Since $\Gamma$ and $\Gamma'$ are $T$-isomorphic, there exists a $\mathbb{C}$-vector space isomorphism $\rho:V \to V'$ such that $(\rho B - f(B) \rho)V=0$ for all $B\in T$, where $f$ is a $T$-algebra isomorphism from $T$ to $T'$.
Let $W$ denote the primary $T$-module.
Set $W'=\rho(W)$.
Then $W'$ is a $T'$-module since for all $B'\in T'$ we have $B'W'=B'\rho(W)=\rho(BW) \subseteq \rho(W)=W'$.
Clearly, the $T'$-module $W'$ is irreducible.
Moreover, $W'$ has endpoint $0$ since $E^*_0W\ne 0$ and $\rho(E^*_0W) = E^{*'}_0\rho(W)=E^{*'}_0W'\ne 0$.
By these comments, $W'$ is the primary $T'$-module; cf. \cite[Proposition 8.4]{2000Egge}.

Let $\bar \rho$ denote the restriction of $\rho$ to $W$.
Then $W$ and $W'$ are $T$-isomorphic because for all $w\in W$ and for all $B\in T$ we have $\bar\rho(Bw)=\rho(Bw)=f(B)\rho(w)=f(B)\bar\rho(w)$.
Recall the standard basis $\{E^*_i\mathds{1}\}^D_{i=0}$ for $W$. 
We observe that $\bar\rho(E^*_i\mathds{1}) = E^{*'}_i\bar\rho(\mathds{1})$ for $0 \leq i \leq D$ and the sequence $\{E^{*'}_i\bar\rho(\mathds{1})\}^D_{i=0}$ is a standard basis for $W'$.
Since $\bar\rho(AE^*_i\mathds{1}) = A'E^{*'}_i\bar\rho(\mathds{1})$ for $0 \leq i \leq D$, the matrix representing $A$ with respect to $\{E^*_i\mathds{1}\}^D_{i=0}$ is equal to the matrix representing $A'$ with respect to $\{E^{*'}_i\bar\rho(\mathds{1})\}^D_{i=0}$.
By \eqref{action: 3-term}, these matrices are tridiagonal whose entries are the intersection numbers of $\Gamma$ and $\Gamma'$, respectively. 
The result follows.
\end{proof}

\begin{lemma}\label{G =iso G' =>Spec(D1)=Spec(D2)}
Suppose that $\Gamma$ and $\Gamma'$ are $T$-isomorphic.
Then for $0 \leq i \leq D$ and for all vertices $x\in X$, $x' \in X'$, the spectrum of $\Delta_i(x)$ and the spectrum of $\Delta_i(x')$ are equal to each other.
\end{lemma}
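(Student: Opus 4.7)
The plan is to mimic the argument given just before the statement, which settled the pseudo-vertex-transitive analogue (Lemma \ref{pvt=>Spec(D1)=Spec(D2)}), and to replace the internal comparison between two base vertices of $\Gamma$ by the external comparison between $\Gamma$ and $\Gamma'$ supplied by the $T$-isomorphism. First I would fix $x\in X$ and $x'\in X'$ and invoke Lemma \ref{lem:[A]_B=[A']_B'}: since the standard $T(x)$-module and the standard $T'(x')$-module are $T$-isomorphic, there exist ordered bases $\mathcal{B}$ for $V$ and $\mathcal{B}'$ for $V'$ such that $A$ and $A^*(x)$ are represented with respect to $\mathcal{B}$ by exactly the same matrices that represent $A'$ and $A^{*\prime}(x')$ with respect to $\mathcal{B}'$.

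Next I would upgrade this from $A^*(x)$ to the dual primitive idempotents. Recall from Section \ref{sec:Ter_Alg} that $A^*(x)=\sum_{i=0}^{D}\theta^*_iE^*_i(x)$ with the scalars $\theta^*_i$ mutually distinct, so each $E^*_i(x)$ is a Lagrange interpolation polynomial in $A^*(x)$; an identical statement holds for $A^{*\prime}(x')$ with dual eigenvalues $\theta^{*\prime}_i$. Since the matrices representing $A^*(x)$ and $A^{*\prime}(x')$ (with respect to $\mathcal{B}$ and $\mathcal{B}'$) are literally equal, they share the same spectrum, so $\theta^*_i=\theta^{*\prime}_i$ for $0\le i\le D$ and the Lagrange polynomials coincide. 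Applying these polynomials to the common matrix shows that the matrix representing $E^*_i(x)$ with respect to $\mathcal{B}$ equals the matrix representing $E^{*\prime}_i(x')$ with respect to $\mathcal{B}'$. Multiplying, the matrix representing $E^*_i(x)AE^*_i(x)$ with respect to $\mathcal{B}$ equals the matrix representing $E^{*\prime}_i(x')A'E^{*\prime}_i(x')$ with respect to $\mathcal{B}'$, so the two operators share the same characteristic polynomial.

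Finally I would translate this equality of characteristic polynomials into the desired equality of spectra of $\Delta_i(x)$ and $\Delta_i(x')$. Written in the natural basis indexed by $X$, the operator $E^*_i(x)AE^*_i(x)$ restricts on the subconstituent space $E^*_i(x)V$ to the adjacency matrix of $\Delta_i(x)$ and vanishes on the orthogonal complement; hence its spectrum is the multiset $\Spec(\Delta_i(x))\cup\{0^{|X|-|\Gamma_i(x)|}\}$, and similarly for $\Gamma'$. By Lemma \ref{lem: G G' T-iso => int array} the intersection arrays of $\Gamma$ and $\Gamma'$ agree, so $|X|=|X'|$ and $|\Gamma_i(x)|=|\Gamma_i(x')|$, and the counts of the padding zero eigenvalues on the two sides match. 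Cancelling them yields $\Spec(\Delta_i(x))=\Spec(\Delta_i(x'))$, as required.

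The only genuinely delicate point, which I expect to be the main (mild) obstacle, is the transfer from $A^*$ to the idempotents $E^*_i$: one has to be sure that the $T$-algebra isomorphism implicit in Lemma \ref{lem:[A]_B=[A']_B'}, which is defined only by its action on the generators $A$ and $A^*$, automatically carries $E^*_i(x)$ to $E^{*\prime}_i(x')$. The Lagrange interpolation argument above, together with the coincidence of dual eigenvalues forced by the equality of matrix representations, makes this identification rigorous.
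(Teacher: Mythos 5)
Your proposal is correct and follows essentially the same route as the paper: the paper's proof of this lemma is simply the remark that the argument preceding Lemma \ref{pvt=>Spec(D1)=Spec(D2)} goes through verbatim once Lemma \ref{lem:[A]_B=[A']_B'} supplies bases of $V$ and $V'$ in which $A$, $A^*(x)$ and $A'$, $A^{*\prime}(x')$ have identical matrices, whence $E^*_i(x)AE^*_i(x)$ and $E^{*\prime}_i(x')A'E^{*\prime}_i(x')$ have equal characteristic polynomials. The two details you add --- passing from $A^*$ to the dual idempotents via Lagrange interpolation and matching the padded zero eigenvalues using the equality of intersection arrays --- are exactly the steps the paper leaves implicit, and you handle them correctly.
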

\begin{proof}
Similar to Lemma \ref{pvt=>Spec(D1)=Spec(D2)}. Use Lemma \ref{lem:[A]_B=[A']_B'}.
\end{proof}

\section{Strongly regular graphs}\label{sec:SRG}
In this section, we summarize the results of \cite{2yama} and give a characterization of pseudo-vertex-transitive strongly regular graphs.
In addition, we give several examples of cospectral strongly regular graphs and discuss their pseudo-vertex transitivity.

\subsection{Preliminaries}
We begin by recalling the notion of a strongly regular graph.
Let $\Gamma$ be a $k$-regular graph with $n$ vertices.
We say $\Gamma$ is \emph{strongly regular} with parameters $(n,k,a,c)$ whenever each pair of adjacent vertices has the same number $a$ of common neighbors, and each pair of distinct non-adjacent vertices has the same number $c$ of common neighbors. 
Note that a connected strongly regular graph with parameters $(n,k,a,c)$ is distance-regular with diameter two and intersection array $\{k,k-a-1; 1,c\}$.

For the rest of this section, we denote by $\Gamma$ a connected strongly regular graph with parameters $(n,k,a,c)$.
The graph $\Gamma$ has exactly three eigenvalues $k$, $\sigma$, $\tau$ with
\begin{equation}\label{srg:eigval}
	\sigma = \frac{a-c+\sqrt{\pmb{D}}}{2}, \qquad
	\tau = \frac{a-c-\sqrt{\pmb{D}}}{2}, 
\end{equation}
where $\pmb{D}=(a-c)^2+4(k-c)$.
The multiplicities $m_\sigma$ and $m_\tau$ of $\sigma$ and $\tau$, respectively, are given by
\begin{equation}\label{srg:mult}
	m_\sigma = \frac{(n-1)\tau+k}{\tau-\sigma}, \qquad m_\tau = \frac{(n-1)\sigma+k}{\sigma-\tau}.
\end{equation}
Let $X$ denote the vertex set of $\Gamma$.
Fix a vertex $x \in X$.
Consider the distance partition $\{\Gamma_i(x)\}^2_{i=0}$ of $X$.
With respect to this partition, we write the adjacency matrix $A$ of $\Gamma$ in the partitioned matrix form:
\begin{equation}\label{adjmat}
	A = \begin{pmatrix}
	0 & \1^t & \mathbf{0} \\
	\1 & B_1 & N \\
	\mathbf{0} & N^t & B_2
	\end{pmatrix},
\end{equation}
where $B_i$ $(i=1,2)$ is the adjacency matrix of the $i$th subconstituent $\Delta_i(x)$ of $\Gamma$.
For notational convenience, we denote by
\begin{equation*}
	\widetilde A_1 := E^*_1AE^*_1 = \begin{pmatrix}
	0 & \mathbf{0} & \mathbf{0} \\
	\mathbf{0} & B_1 & \mathbf{0} \\
	\mathbf{0} & \mathbf{0} & \mathbf{0}
	\end{pmatrix},
	\qquad 
	\widetilde A_2 := E^*_2AE^*_2 = \begin{pmatrix}
	0 & \mathbf{0} & \mathbf{0} \\
	\mathbf{0} & \mathbf{0} & \mathbf{0} \\
	\mathbf{0} & \mathbf{0} & B_2
	\end{pmatrix},
\end{equation*}
where $E^*_i=E^*_i(x)$ $(i=1,2)$ is the $i$th dual primitive idempotent of $\Gamma$.
Note that $\Delta(x)$ is $a$-regular with $k$ vertices and that $\Delta_2(x)$ is  $(k-c)$-regular with  $n-k-1$ vertices.
It follows that $\Delta(x)$ and $\Delta_2(x)$ have trivial eigenvalues $a$ and $k-c$, respectively.
We say that an eigenvalue of $\Delta_i(x) (i=1,2)$ is \emph{local} if it is not equal to an eigenvalue of $\Gamma$ and has an eigenvector orthogonal to $\1$.
The following is a characterization of a local eigenvalue of $\Delta_i(x)$ $(i=1,2)$.

\begin{lemma}[cf.{\cite[Theorem 10.6.3]{god}}]\label{2sub}
Let $(i,j)=(1,2)$ or $(i,j)=(2,1)$.  
Then $\lambda$ is a local eigenvalue of $\Delta_i(x)$ if and only
if $a-c-\lambda$ is a local eigenvalue of  $\Delta_j(x)$, with equal multiplicities.
\end{lemma}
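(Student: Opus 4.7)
The plan is to exploit the block-matrix form of $A$ shown in (\ref{adjmat}) together with the strongly regular quadratic identity $A^2 = (a-c)A + (k-c)I + cJ$. Reading off the $(\Gamma_1(x),\Gamma_1(x))$, $(\Gamma_2(x),\Gamma_2(x))$, and $(\Gamma_1(x),\Gamma_2(x))$ blocks of both sides yields three matrix identities:
\begin{align*}
  B_1^2 + NN^t &= (c-1)J + (a-c)B_1 + (k-c)I, \\
  B_2^2 + N^t N &= cJ + (a-c)B_2 + (k-c)I, \\
  B_1 N + N B_2 &= cJ + (a-c)N,
\end{align*}
where the shift from $c$ to $c-1$ in the first line is caused by the extra contribution $\1\1^t = J$ coming from the row/column of the base vertex $x$. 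These three relations will drive everything.

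Next, given a local eigenvalue $\lambda$ of $\Delta_1(x)$ with eigenvector $v \perp \1$ and $B_1 v = \lambda v$, I would set $u := N^t v$. Transposing the third identity above and applying to $v$ (using $Jv = 0$) gives $B_2 u = (a-c-\lambda)u$. Using that each vertex in $\Gamma_1(x)$ has exactly $k-a-1$ neighbors in $\Gamma_2(x)$, so that $N\1 = (k-a-1)\1$, one checks $u \perp \1$. The crucial nondegeneracy $u \ne 0$ comes from applying the first identity to $v$: since $Jv = 0$, it yields $NN^t v = \bigl((a-c)\lambda + (k-c) - \lambda^2\bigr)v = -(\lambda-\sigma)(\lambda-\tau)v$, where the factorization uses $\sigma+\tau = a-c$ and $\sigma\tau = -(k-c)$ from (\ref{srg:eigval}). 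Hence $\|u\|^2 = -(\lambda-\sigma)(\lambda-\tau)\|v\|^2$, which is nonzero precisely because locality forces $\lambda \notin \{\sigma,\tau\}$.

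From here I would interpret $v \mapsto N^t v$ as an injective linear map from the $\lambda$-eigenspace of $B_1$ inside $\1^\perp$ into the $(a-c-\lambda)$-eigenspace of $B_2$ inside $\1^\perp$. A symmetric argument using $u \mapsto N u$ produces the reverse injection, so the two eigenspaces have equal dimension, yielding the asserted equality of multiplicities. It remains to verify that $a-c-\lambda$ genuinely satisfies the paper's definition of local, i.e., is not an eigenvalue of $\Gamma$: the cases $a-c-\lambda \in \{\sigma,\tau\}$ are immediate from $\sigma+\tau = a-c$ and $\lambda \notin \{\sigma,\tau\}$, and the remaining case $a-c-\lambda = k$ is ruled out by the Perron--Frobenius bound, since the spectral radius of $B_2$ is at most its valency $k-c < k$. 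This last spectral bound is the main subtlety I anticipate; everything else amounts to careful bookkeeping of the block decomposition and the strongly regular identity.
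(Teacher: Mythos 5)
Your proposal is correct, and I have no substantive objections: the three block identities are the right reading of $A^2=(k-c)I+(a-c)A+cJ$ (including the $(c-1)J$ correction from the base vertex's row), the norm computation $\|N^tv\|^2=-(\lambda-\sigma)(\lambda-\tau)\|v\|^2$ correctly supplies injectivity in both directions, and the Perron--Frobenius bound disposes of the case $a-c-\lambda=k$. Note that the paper gives no proof of this lemma at all --- it is quoted directly from Godsil and Royle \cite[Theorem 10.6.3]{god} --- so your argument is not an alternative route but a self-contained reconstruction of the standard proof from the cited source.
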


\begin{lemma}[cf. {\cite[Proposition 3.1, Lemma 3.2]{2yama}}]\label{int}
Let $\sigma, \tau$ be eigenvalues of $\Gamma$ as in \eqref{srg:eigval}.
Fix a vertex $x$ of $\Gamma$ and write $E^*_i=E^*_i(x)$ for $0 \leq i \leq 2$ and $T=T(x)$.
Let $A$ be the adjacency matrix of $\Gamma$ as in \eqref{adjmat}.
Let $V$ be the standard $T$-module.
Then the following (I) and (II) hold.
\begin{itemize}
	\item[(I)] Let $v\in E^*_1V$ be an eigenvector of $\widetilde A_1$ with an eigenvalue $\lambda$ such that $\<\1,v\>=0$.
	Then $E^*_2AE^*_1 v =0$ if and only if $\lambda \in \{\sigma, \tau\}$.
	Let $W$ be the subspace of $V$ spanned by $v, E^*_2A E^*_1v$.
	Then $W$ is a thin irreducible $T$-module.
	Moreover, the following (i) and (ii) hold.
	\item[(i)] Suppose $\lambda \notin\{\sigma, \tau\}$. 
	Then  $E^*_2AE^*_1 v$ is an eigenvector of $\widetilde A_2$ with an eigenvalue $\sigma+\tau-\lambda$.
	The $T$-module $W$ has endpoint $1$ and diameter $1$.
	The vector $v$ is a basis for $E^*_1W$ and $E^*_2AE^*_1 v$ is a basis for $E^*_2W$.
	The matrix representing $A$ with respect to a basis $\{v, E^*_2AE^*_1 v\}$ for $W$ is
	\begin{equation}\label{lem:mat(1)}
		\begin{pmatrix}
		\lambda & -(\lambda-\sigma)(\lambda-\tau)\\
		1 & \sigma+\tau-\lambda
		\end{pmatrix}.
	\end{equation}
	\item[(ii)] If $\lambda\in \{\sigma,\tau\}$, then $W=E^*_1W$. 
	The $T$-module $W$ has endpoint $1$ and diameter $0$. 
	The action of $A$ on $W$ is given by $Av=\lambda v$. 
	\item[(II)] Let $u\in E^*_2V$ be an eigenvector of $\widetilde A_2$ with an eigenvalue $\lambda'$ such that $\<\1,u\>=0$.
	Then $E^*_1AE^*_2 u =0$ if and only if $\lambda' \in \{\sigma, \tau\}$.
	Let $W$ be the subspace of $V$ spanned by $u, E^*_1A E^*_2u$.
	Then $W$ is a thin irreducible $T$-module.
	Moreover, the following (i) and (ii) hold.
	\item[(i)] Suppose $\lambda' \notin\{\sigma, \tau\}$. 
	Then  $E^*_1AE^*_2 u$ is an eigenvector of $\widetilde A_1$ with an eigenvalue $\sigma+\tau-\lambda'$.
	The $T$-module $W$ has endpoint $1$ and diameter $1$.
	The vector $u$ is a basis for $E^*_2W$ and $E^*_1AE^*_2 u$ is a basis for $E^*_1W$.
	The matrix representing $A$ with respect to a basis $\{E^*_1AE^*_2 u, u\}$ for $W$ is
	\begin{equation*}
		\begin{pmatrix}
		\lambda' & -(\lambda'-\sigma)(\lambda'-\tau)\\
		1 & \sigma+\tau-\lambda'
		\end{pmatrix}.
	\end{equation*}
	\item[(ii)] If $\lambda' \in \{\sigma,\tau\}$, then $W=E^*_2W$. 
	The $T$-module $W$ has endpoint $2$ and diameter $0$. 
	The action of $A$ on $W$ is given by $Au=\lambda' u$. 
\end{itemize}
\end{lemma}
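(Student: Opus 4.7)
The plan is to compute $Av$ and $A^{2}v$ in two ways and match, using the defining quadratic relation $A^{2}=(k-c)I+(a-c)A+cJ$ of the strongly regular graph $\Gamma$. First, the hypothesis $\langle\mathbf{1},v\rangle=0$ forces $\mathbf{1}^{t}v=0$ and hence $Jv=0$. Since $v\in E^{*}_{1}V$, evaluating $Av$ at $x$ gives $(Av)_{x}=\mathbf{1}^{t}v=0$, so $E^{*}_{0}Av=0$, while $E^{*}_{1}Av=\widetilde A_{1}v=\lambda v$. Setting $u:=E^{*}_{2}AE^{*}_{1}v\in E^{*}_{2}V$, this produces $Av=\lambda v+u$. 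Applying $A$ once more and comparing the two expansions
\[
\lambda^{2}v+\lambda u+Au \;=\; A^{2}v \;=\; \bigl[(k-c)+(a-c)\lambda\bigr]v+(a-c)u
\]
(using $Jv=0$ on the right), and substituting $\sigma+\tau=a-c$, $\sigma\tau=c-k$, yields the key identity
\[
Au \;=\; -(\lambda-\sigma)(\lambda-\tau)\,v \;+\; (\sigma+\tau-\lambda)\,u.
\]
Applying $E^{*}_{2}$ to this identity shows that, when $u\ne 0$, $u$ is an eigenvector of $\widetilde A_{2}$ with eigenvalue $\sigma+\tau-\lambda$; and reading off the action of $A$ on $W:=\mathrm{span}\{v,u\}$ in the ordered basis $(v,u)$ recovers exactly the matrix \eqref{lem:mat(1)}.

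The crux of the lemma is the ``iff'' statement, and the plan for it is a short inner-product computation. Using $\langle v,u\rangle=0$ (since $v\in E^{*}_{1}V$ and $u\in E^{*}_{2}V$ lie in orthogonal subconstituent spaces), the reality of $\lambda$, and the symmetry of $A$,
\[
\|u\|^{2} \;=\; \langle Av-\lambda v,\,u\rangle \;=\; \langle v,\,Au\rangle \;=\; -(\lambda-\sigma)(\lambda-\tau)\,\|v\|^{2},
\]
so $u=0$ if and only if $(\lambda-\sigma)(\lambda-\tau)=0$, i.e.\ $\lambda\in\{\sigma,\tau\}$. This norm identity is the step that feels most delicate; once it is in hand everything else is bookkeeping, and as a byproduct one also gets the automatic bound $\tau\le\lambda\le\sigma$.

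To conclude, $W$ is $A$-invariant by the two displayed identities and $E^{*}_{j}$-invariant because $v\in E^{*}_{1}V$ and $u\in E^{*}_{2}V$, so $W$ is a $T$-module. In case (i), $W$ is $2$-dimensional and any proper $T$-submodule would be $1$-dimensional, $E^{*}_{j}$-invariant, and hence a nonzero scalar multiple of $v$ or of $u$; neither is $A$-invariant precisely because the off-diagonal entry $-(\lambda-\sigma)(\lambda-\tau)$ of \eqref{lem:mat(1)} is nonzero when $\lambda\notin\{\sigma,\tau\}$. Hence $W$ is irreducible; thinness, endpoint $1$, and diameter $1$ then follow by inspection of which $E^{*}_{j}W$ are nonzero. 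Case (ii) is immediate: $W=\mathbb{C}v$ is a $1$-dimensional thin irreducible $T$-module with endpoint $1$ and diameter $0$. Finally, part (II) is proved by an entirely parallel computation with the roles of $E^{*}_{1}V$ and $E^{*}_{2}V$ interchanged; the analogous hypothesis that the starting eigenvector in $E^{*}_{2}V$ is orthogonal to $\mathbf{1}$ ensures that $J$ annihilates it and the same quadratic computation goes through verbatim.
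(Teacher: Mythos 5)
Your proposal is correct and complete. Note that the paper itself offers no proof of this lemma: it is quoted from Tomiyama--Yamazaki \cite[Proposition 3.1, Lemma 3.2]{2yama}, so there is nothing internal to compare against; your argument is essentially the standard one from that reference, reconstructed from scratch. The two pillars are exactly right: (a) the quadratic relation $A^{2}=(k-c)I+(a-c)A+cJ$ together with $Jv=0$ yields $Au=-(\lambda-\sigma)(\lambda-\tau)v+(\sigma+\tau-\lambda)u$ after substituting $\sigma+\tau=a-c$, $\sigma\tau=c-k$; and (b) the norm identity $\|u\|^{2}=-(\lambda-\sigma)(\lambda-\tau)\|v\|^{2}$ delivers the ``iff'' in both directions at once (plus the interlacing bound $\tau\le\lambda\le\sigma$ as a bonus). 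Two small remarks. First, in part (I) you correctly used $\langle\1,v\rangle=0$ to kill $E^{*}_{0}Av$; in part (II) the analogous worry is whether $E^{*}_{0}Aw=0$ for $w=E^{*}_{1}AE^{*}_{2}u$, and your phrase ``goes through verbatim'' glosses over this --- but it is harmless, since the quadratic relation expresses $A^{2}u$ as a combination of $u$ and $w$ with no $E^{*}_{0}$ component, so $Aw=A^{2}u-\lambda'w-\lambda'^{2}u$ automatically has none (alternatively, $\1^{t}w=c\,\1^{t}u=0$ because every vertex of $\Gamma_{2}(x)$ has exactly $c$ neighbours in $\Gamma_{1}(x)$). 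Second, in the irreducibility step the obstruction to $\mathbb{C}v$ being $A$-invariant is $u\ne 0$ (the subdiagonal entry $1$ acting on a nonzero vector), not the off-diagonal entry $-(\lambda-\sigma)(\lambda-\tau)$; that entry is what rules out $\mathbb{C}u$. Neither point affects the validity of the proof.
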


Recall the first subconstituent space $E^*_1V$ of $\Gamma$. 
Note that $\dim(E^*_1V)=k$.
Let 
\begin{equation}\label{eq:eigvec til(A)}
	v_1, v_2, \ldots, v_r, v_{r+1}, \ldots, v_k
\end{equation}
denote a sequence of eigenvectors of $\widetilde A_1$ in $E^*_1V$ such that $v_1:=E^*_1\1$  and $\<v_i,v_j\>=0$ for $i\ne j$.
Observe that $\{v_i\}^k_{i=1}$ is an orthogonal basis for $E^*_1V$.
Let 
\begin{equation*}
	a=\lambda_1, \lambda_2, \ldots, \lambda_r, \lambda_{r+1}, \ldots, \lambda_k
\end{equation*}
denote the eigenvalues of $\tilde{A}_1$ corresponding to the eigenvectors in \eqref{eq:eigvec til(A)}, respectively.
Assume that $\lambda_i \notin \{\sigma, \tau\}$ for $1\leq i \leq r$ and $\lambda_j \in \{\sigma, \tau\}$ for $r+1\leq j \leq k$.  
We consider the second subconstituent space $E^*_2V$ of $\Gamma$.
Note that $\dim(E^*_2V)=n-k-1$.
For each vector $v_i$ $(1\leq i \leq r)$ in \eqref{eq:eigvec til(A)}, abbreviate $u_i=E^*_2AE^*_1v_i$.
Observe that $u_1$ is an eigenvector of $\widetilde{A}_2$ with eigenvalue $k-c$.
By Lemma \ref{int}(I)(i), $u_i$ $(2\leq i \leq r)$ is an eigenvector of $\widetilde A_2$ associated with the eigenvalue $\sigma+\tau-\lambda_i \notin \{\sigma, \tau\}$.
We choose the rest of the eigenvectors of $\widetilde A_2$ in $E^*_2V$, denoted by
$$
	u_{r+1}, u_{r+2}, \ldots, u_{s}, 
$$
where $s:= n-k-1$, such that $\{u_i\}^s_{i=1}$ spans $E^*_2V$ and $\<u_i, u_j\>=0$ for $1\leq i\ne j \leq s$.
Observe that $\{u_i\}^s_{i=1}$ is a basis for $E^*_2V$. 
Let $\lambda'_h$ denote the eigenvalue of $\widetilde A_2$ associated with $u_h$ for $r+1\leq h \leq s$.
 By Lemma \ref{int}(II), we find $\lambda'_h \in \{\sigma, \tau\}$.
Let $W_i=Tv_i$ for $1\leq i \leq k$ and let $W'_h=Tu_h$ for $r+1 \leq h \leq s$.
We say $W_i$ (resp. $W'_h$) is \emph{associated with} $\lambda_i$ (resp. $\lambda'_h$).
We find that $W_i$ and $W'_h$ are thin irreducible $T$-modules.
In the following table, we display all thin irreducible $T$-modules of $\Gamma$:
\begin{equation}\label{eq:thinT-modSRG}
\renewcommand{\arraystretch}{1.2}
\begin{tabular}{lcccc}
	\hline
	thin irreducible $T$-module & basis & dimension & endpoint & diameter  \\
	\hline
	\qquad $W_1$ & $\{E^*_nAE^*_1v_1\}^2_{n=0}$  & 3 & 0 & 2 \\
	\qquad $W_i$ $(2 \leq i \leq r)$ & $v_i$, $E^*_2AE^*_1v_i$ & 2 & 1 & 1 \\
	\qquad $W_j$ $(r+1 \leq j \leq k)$ & $v_j$ & 1 & 1 & 0 \\
	\qquad $W'_h$ $(r+1 \leq h \leq s)$& $u_h$ &1  & 2 & 0\\
	\hline
\end{tabular}
\end{equation}
Since $T$ is semisimple, the standard $T$-module $V$ decomposes into an orthogonal direct sum of thin irreducible $T$-submodules: 
\begin{equation}\label{eq:SRG V}
	V = W_1 \oplus (\bigoplus^r_{i=2} W_i) \oplus (\bigoplus^k_{j=r+1} W_j) \oplus (\bigoplus^s_{h=r+1} W'_h).
\end{equation}

The following proposition classifies thin irreducible $T$-modules of $\Gamma$ up to isomorphism.
\begin{proposition}[cf.{\cite[Lemma 3.4]{2yama}}]\label{prop:classT-mod}
Let $W_i$ (resp. $W'_h$) denote an irreducible $T$-module associated with an eigenvalue $\lambda_i$ (resp. $\lambda'_h$) as in \eqref{eq:thinT-modSRG}.
Then the following (i) and (ii) hold.
\begin{itemize}
	\item[(i)] For  $2\leq i,j \leq k$, $W_i$ and $W_j$ are isomorphic as $T$-modules if and only if $\lambda_i=\lambda_j$.
	\item[(ii)] For $r+1\leq h,l \leq s$, $W'_h$ and $W'_l$ are isomorphic as $T$-modules if and only if $\lambda'_h=\lambda'_l$.
\end{itemize}
\end{proposition}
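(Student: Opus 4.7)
The strategy is to exploit the rigidity of thin irreducible $T$-modules with one-dimensional subconstituent spaces: such a module is determined, up to $T$-module isomorphism, by its endpoint, its diameter, and the action of $A$ (which by \eqref{lem:mat(1)} is governed by the local eigenvalue). Any $T$-module homomorphism between two such modules must commute with each $E^*_n$, hence must carry $E^*_nW \to E^*_nW'$; since these subspaces are at most one-dimensional, the homomorphism is pinned down up to scalar by its value on any single nonzero vector in some $E^*_nW$. I will use this to reduce both ``if'' and ``only if'' to a comparison of local eigenvalues.

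For the ``only if'' direction in (i), suppose $\rho:W_i \to W_j$ is a $T$-module isomorphism with $2\leq i,j \leq k$. Since $\rho$ commutes with every element of $T$, in particular with each $E^*_n$, it restricts to an isomorphism $E^*_1W_i \to E^*_1W_j$. Both spaces are one-dimensional, spanned by $v_i$ and $v_j$ respectively, so $\rho(v_i)=cv_j$ for some nonzero $c\in\mathbb{C}$. Because $\widetilde A_1 = E^*_1AE^*_1\in T$, we have $\rho \widetilde A_1 v_i = \widetilde A_1 \rho v_i$, and evaluating both sides yields $\lambda_i c v_j = c\lambda_j v_j$, forcing $\lambda_i=\lambda_j$. (In particular, $W_i$ of dimension $2$ cannot be isomorphic to $W_j$ of dimension $1$, in agreement with $\lambda_i\notin\{\sigma,\tau\}$ versus $\lambda_j\in\{\sigma,\tau\}$.) The same argument, using $\widetilde A_2$ and the vectors $u_h,u_l$, proves the ``only if'' direction of (ii).

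For the ``if'' direction, assume $\lambda_i=\lambda_j$ (respectively $\lambda'_h=\lambda'_l$), and split into the cases displayed in \eqref{eq:thinT-modSRG}. In each of the one-dimensional cases (rows $3$ and $4$ of the table), I define $\rho$ by $\rho(v_i)=v_j$ or $\rho(u_h)=u_l$; both $A$ and every $E^*_n$ act diagonally in these bases with identical scalars on both sides (the common local eigenvalue of $A$, and $\delta_{1n}$ or $\delta_{2n}$ for $E^*_n$), so $\rho$ intertwines both $A$ and the dual Bose-Mesner algebra $M^*(x)$. Since $T$ is generated as a $\mathbb{C}$-algebra by $A$ together with $M^*(x)$, $\rho$ is a $T$-module isomorphism. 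In the two-dimensional case $2\leq i,j\leq r$, define $\rho$ by $\rho(v_i)=v_j$ and $\rho(E^*_2AE^*_1v_i)=E^*_2AE^*_1v_j$. By Lemma \ref{int}(I)(i), with respect to these bases the matrix representing $A$ is the matrix \eqref{lem:mat(1)}, which depends only on $\lambda_i=\lambda_j$, $\sigma$, and $\tau$; hence the two matrices coincide, so $\rho A = A\rho$. That $\rho$ commutes with each $E^*_n$ is immediate from the basis assignment, and the generation argument closes the case.

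The proof contains no genuine obstacle; the only point to be careful about is that a $T$-module isomorphism must intertwine all of $T$, not merely $A$ and $M^*(x)$. This is resolved at once by the fact that $A$ together with $E^*_0,E^*_1,\ldots,E^*_D$ generate $T$, so matching the matrix representations of $A$ (via \eqref{lem:mat(1)}) and of the $E^*_n$ (via the canonical basis choice) suffices.
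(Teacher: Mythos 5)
Your proof is correct. Note that the paper itself supplies no argument for this proposition --- it simply defers to \cite[Lemma 3.4]{2yama} --- so there is no in-text proof to compare against; your write-up is a sound self-contained verification built from exactly the machinery the paper has already assembled. The two directions are both handled properly: the ``only if'' direction correctly uses that an intertwiner commutes with $E^*_1AE^*_1$ (resp.\ $E^*_2AE^*_2$) and that the relevant subconstituent components are one-dimensional, and the ``if'' direction correctly observes that when $\lambda_i=\lambda_j$ the two modules fall in the same row of \eqref{eq:thinT-modSRG} (since membership in $\{\sigma,\tau\}$ is determined by the eigenvalue), after which matching the matrix \eqref{lem:mat(1)} of $A$ and the diagonal action of the $E^*_n$ suffices because $A$ and $E^*_0,\ldots,E^*_D$ generate $T$. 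This is essentially the argument one finds in the cited reference, so nothing further is needed.
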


Let $\ell_1$ (resp. $\ell_2$) denote the number of distinct eigenvalues of $\Delta(x)$ (resp. $\Delta_2(x)$) contained in $\{\sigma, \tau\}$ with respect to the eigenvectors orthogonal to $E^*_1\1$ (resp. $E^*_2\1$).
Let $\ell_1'$ (resp. $\ell_2'$) denote the number of distinct eigenvalues of $\Delta(x)$ (resp. $\Delta_2(x)$) not contained in $\{\sigma, \tau\}$ with respect to the eigenvectors orthogonal to $E^*_1\1$ (resp. $E^*_2\1$).
By Proposition \ref{prop:classT-mod}, the isomorphism classes of irreducible $T$-modules of $\Gamma$ are determined by $\Spec(\Gamma)$, $\Spec(\Delta(x))$, and $\Spec(\Delta_2(x))$.
It follows that there are $\ell_1+\ell_2$ irreducible $T(x)$-modules of dimension one up to isomorphism, and there are $\ell_1'$ $(=\ell_2')$ thin irreducible $T(x)$-modules of dimension two up to isomorphism. 
There is only one thin irreducible $T(x)$-module of dimension three, namely the primary $T(x)$-module.
By Proposition \ref{Prop:Wedderburn} and \eqref{eq:SRG V}, the algebra $T(x)$ is isomorphic to the semisimple algebra
\begin{equation}\label{eq:ds of mat algebra}
	\big(\mathrm{Mat}_{1}(\mathbb{C})^{\oplus \ell_1}\big) \oplus
	\big(\mathrm{Mat}_{1}(\mathbb{C})^{\oplus \ell_2}\big) \oplus
	\big(\mathrm{Mat}_{2}(\mathbb{C})^{\oplus \ell_1'}\big) \oplus
	\mathrm{Mat}_{3}(\mathbb{C}).
\end{equation}
We call $(\ell_1,  \ell_1', \ell_2, \ell_2')$ the \emph{dimension sequence} of $T(x)$.
Note that the dimension sequence of $T(x)$ depends on the base vertex $x$ of $\Gamma$.

\begin{lemma}[cf. {\cite[Theorem 1.1]{2yama}}]\label{dim}
Let $x$ be a base vertex of $\Gamma$ and let $T(x)$ be the Terwilliger algebra of $\Gamma$ with respect to $x$. 
Then
\begin{align*}
	\dim T(x) 
		& =\ell_1+\ell_2+4 \ell_1'+9 \\
		&=\ell_1+\ell_2+4 \ell_2'+9,
\end{align*}
where $(\ell_1,  \ell_1', \ell_2, \ell_2')$ is the dimension sequence of $T(x)$.
\end{lemma}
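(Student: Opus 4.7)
The proof is essentially a direct application of Proposition \ref{Prop:Wedderburn} to the structure theory already established in this section, plus one observation that yields $\ell_1' = \ell_2'$.

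The plan is to start from the orthogonal decomposition \eqref{eq:SRG V} of the standard $T$-module $V$ into thin irreducible $T$-submodules and invoke Proposition \ref{prop:classT-mod} to collect these submodules into isomorphism classes. Reading off the table \eqref{eq:thinT-modSRG} and using the definitions of $\ell_1,\ell_1',\ell_2,\ell_2'$, there is exactly one isomorphism class of dimension three (the primary module $W_1$); there are $\ell_1'$ pairwise non-isomorphic classes of dimension two (the modules $W_i$ with $2\le i\le r$, sorted by their local eigenvalue $\lambda_i \notin \{\sigma,\tau\}$); there are $\ell_1$ pairwise non-isomorphic classes of dimension one with endpoint $1$ (the $W_j$ with $\lambda_j\in\{\sigma,\tau\}$); and there are $\ell_2$ pairwise non-isomorphic classes of dimension one with endpoint $2$ (the $W'_h$ with $\lambda'_h\in\{\sigma,\tau\}$). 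Applying formula \eqref{T:dim formula} of Proposition \ref{Prop:Wedderburn} then gives
\begin{equation*}
    \dim T(x) \;=\; 3^2 + \ell_1'\cdot 2^2 + \ell_1\cdot 1^2 + \ell_2\cdot 1^2 \;=\; \ell_1+\ell_2+4\ell_1'+9,
\end{equation*}
which proves the first equality. (Equivalently, one can simply read this off of \eqref{eq:ds of mat algebra}.)

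For the second equality it suffices to show $\ell_1'=\ell_2'$. I would deduce this from Lemma \ref{int}(I)(i): if $v\in E^*_1V$ is an eigenvector of $\widetilde A_1$ with eigenvalue $\lambda\notin\{\sigma,\tau\}$ and $\langle \1,v\rangle =0$, then $E^*_2AE^*_1v$ is a nonzero eigenvector of $\widetilde A_2$ with eigenvalue $\sigma+\tau-\lambda\notin\{\sigma,\tau\}$ and orthogonal to $E^*_2\1$. The inverse map is furnished by Lemma \ref{int}(II)(i). This sets up a bijection between the relevant eigenvalue sets that counts $\ell_1'$ on one side and $\ell_2'$ on the other (one can equivalently invoke Lemma \ref{2sub}, noting that $\sigma+\tau=a-c$), which yields $\ell_1'=\ell_2'$ and completes the proof.

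The computation is mechanical once the classification of irreducible $T$-modules in \eqref{eq:thinT-modSRG} and the isomorphism criterion of Proposition \ref{prop:classT-mod} are in hand; the only nontrivial point is the bijection establishing $\ell_1'=\ell_2'$, but even this is an immediate consequence of the explicit actions recorded in Lemma \ref{int}. I do not anticipate any real obstacle.
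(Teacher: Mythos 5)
Your proposal is correct and follows essentially the same route as the paper, which proves the lemma by citing the semisimple decomposition \eqref{eq:ds of mat algebra} together with Proposition \ref{Prop:Wedderburn}. You merely spell out the counting of isomorphism classes and the justification of $\ell_1'=\ell_2'$ (via Lemma \ref{int} or Lemma \ref{2sub}) that the paper asserts in the text immediately preceding the lemma.
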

\begin{proof}
By \eqref{eq:ds of mat algebra} and Proposition \ref{Prop:Wedderburn}.
\end{proof}

\subsection{A characterization of pseudo-vertex-transitive strongly regular graphs}

In this subsection, we characterize pseudo-vertex-transitive strongly regular graphs. 
First, we discuss a relationship between $\mathrm{Spec}(\Delta(x))$ and $\mathrm{Spec}(\Delta_2(x))$ of $\Gamma$.

\begin{lemma}\label{specdet}
Let $k, \sigma, \tau$ be eigenvalues of $\Gamma$ as in \eqref{srg:eigval} with multiplicities $1, m_\sigma, m_\tau$, respectively.  
Fix a vertex $x$ of $\Gamma$.
The spectrum of the second subconstituent $\Delta_2(x)$ of $\Gamma$ is determined by the spectrum of the local graph $\Delta(x)$ of $\Gamma$ and the parameters $(n,k,a,c)$.
\end{lemma}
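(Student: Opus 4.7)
The plan is to read off $\mathrm{Spec}(\Delta_2(x))$ from the $T(x)$-module decomposition \eqref{eq:SRG V} of $V$ together with the eigenvalue data displayed in \eqref{eq:thinT-modSRG}. First, note that $\sigma, \tau$ and their multiplicities $m_\sigma, m_\tau$ in $\mathrm{Spec}(\Gamma)$ are determined by $(n,k,a,c)$ via \eqref{srg:eigval} and \eqref{srg:mult}. From $\mathrm{Spec}(\Delta(x))$ we then extract the integer $r$ (the number of eigenvalues of $\widetilde A_1$ on $E_1^*V$ lying outside $\{\sigma,\tau\}$, counted with multiplicity) together with the multiset $\{\lambda_i\}_{i=2}^{r}$ of such eigenvalues and the multiplicities $m^1_\sigma, m^1_\tau$ of $\sigma$ and $\tau$ in $\mathrm{Spec}(\Delta(x))$.

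Next, the contributions to $\mathrm{Spec}(\Delta_2(x)) = \mathrm{Spec}(B_2)$ from the irreducible $T$-modules in \eqref{eq:SRG V} can be read off from \eqref{eq:thinT-modSRG} and Lemma \ref{int}(I)(i): the primary module $W_1$ contributes $k-c$; each $W_i$ (for $2 \le i \le r$) contributes $\sigma+\tau-\lambda_i = (a-c)-\lambda_i$; and each $W'_h$ (for $r+1 \le h \le n-k-1$) contributes an eigenvalue $\lambda'_h \in \{\sigma,\tau\}$. The first two groups are already determined by $\mathrm{Spec}(\Delta(x))$ and $(n,k,a,c)$, so it suffices to compute $m^2_\sigma := |\{h : \lambda'_h = \sigma\}|$ and $m^2_\tau := |\{h : \lambda'_h = \tau\}|$.

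Finally, I compute these remaining multiplicities by counting the total multiplicity of $\sigma$ in $\mathrm{Spec}(\Gamma)$ in two ways. A direct calculation using \eqref{lem:mat(1)} shows that $A$ restricted to each two-dimensional module $W_i$ has characteristic polynomial $(x-\sigma)(x-\tau)$, while $A|_{W_1}$ has spectrum $\{k,\sigma,\tau\}$; thus
\begin{equation*}
m_\sigma = 1 + (r-1) + m^1_\sigma + m^2_\sigma,
\end{equation*}
and similarly for $\tau$. These identities yield $m^2_\sigma = m_\sigma - r - m^1_\sigma$ and $m^2_\tau = m_\tau - r - m^1_\tau$, both expressible in terms of $\mathrm{Spec}(\Delta(x))$ and $(n,k,a,c)$. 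The main technical nuisance will be handling degenerate cases where $a$, $k-c$, $\sigma$, $\tau$ coincide (so that the same eigenvalue of $B_2$ may be contributed by several module types simultaneously), but the $T$-module decomposition remains valid in every case and the linear system for $m^2_\sigma, m^2_\tau$ has a unique solution since $\sigma \ne \tau$.
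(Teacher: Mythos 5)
Your proposal is correct and arrives at the same final formula as the paper, but by a different closing argument. The paper's proof is more elementary: it invokes Lemma \ref{2sub} to write $\mathrm{Spec}(\Delta_2(x))$ as $\{(k-c)^1,\sigma^{g_\sigma},\tau^{g_\tau},(a-c-\lambda_1)^{m_1},\ldots\}$ and then pins down the two unknowns $g_\sigma,g_\tau$ from the $2\times 2$ linear system given by $|\Gamma_2(x)|=n-k-1$ and $\mathrm{trace}(\widetilde A_2)=0$, obtaining $g_\sigma=m_\sigma-k+f_\tau$ and $g_\tau=m_\tau-k+f_\sigma$; no module theory is needed beyond the subconstituent eigenvalue pairing. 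You instead determine the same two unknowns by decomposing the global multiplicities $m_\sigma,m_\tau$ of $\Gamma$ across the irreducible $T$-modules in \eqref{eq:SRG V}, using that each two-dimensional module has $A$-spectrum $\{\sigma,\tau\}$ (your trace/determinant computation from \eqref{lem:mat(1)} is right) and the primary module has $A$-spectrum $\{k,\sigma,\tau\}$; with $r=k-f_\sigma-f_\tau$ your $m^2_\sigma=m_\sigma-r-m^1_\sigma$ reduces to exactly the paper's $g_\sigma=m_\sigma-k+f_\tau$ (one can check this on $J(8,2)$: $m^2_\sigma=7-6-1=0$, $m^2_\tau=20-6-5=9$, matching the example). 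Your route costs the full $T$-module machinery of Section 5.1 but buys a conceptual dividend: it exhibits the multiplicities $g_\sigma,g_\tau$ directly as counts of one-dimensional irreducible $T$-modules with endpoint $2$, which is the form in which this data is actually used later (e.g., in the dimension sequence and Lemma \ref{dim}). The degenerate coincidences you flag (e.g.\ $a=\sigma$, as happens for generalized quadrangle point graphs) are a genuine bookkeeping issue but affect the paper's proof equally, and are resolved the same way in both arguments since $a$ and $k-c$ are known from the parameters and the relevant eigenvectors are distinguished by being non-orthogonal to $\mathds{1}$.
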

\begin{proof}
Let us denote the spectrum of $\Delta(x)$ by 
$$
\{a^1, \sigma^{f_\sigma}, \tau^{f_\tau}, \lambda_1^{m_1}, \ldots,\lambda_s^{m_s}\},
$$ 
for some $s$. Here, it is possible that $f_\sigma=0$ and/or $f_\tau=0$. 
By Lemma \ref{2sub}, we can write the spectrum of $\Delta_2(x)$ as
$$
\{(k-c)^1, \sigma^{g_\sigma}, \tau^{g_\tau}, (a-c-\lambda_1)^{m_1}, \ldots,(a-c-\lambda_s)^{m_s}\},
$$
where it is possible that $g_\sigma=0$ and/or $g_\tau=0$. 
Since $|\Gamma_1(x)|=k$ and $|\Gamma_2(x)|=n-k-1$, we have
\begin{equation}\label{eq:sum mult}
	1+f_\sigma+f_\tau+m_1+\cdots+m_s=k, \qquad
	1+g_\sigma+g_\tau+m_1+\cdots + m_s=n-k-1.
\end{equation}
Using the equations in \eqref{eq:sum mult}, we obtain
\begin{equation}\label{pf:lem eq(1)}
	g_\sigma+g_\tau = n-2k-1+f_\sigma+f_\tau.
\end{equation}
Since the sum of all the eigenvalues of $\Delta_2(x)$ is the trace of $\widetilde A_2$, which is zero, it follows
\begin{equation}\label{pf:lem eq(2)}
	\sigma g_\sigma+\tau g_\tau=-(k-c)-m_1(a-c-\lambda_1)-\ldots-m_s(a-c-\lambda_s).
\end{equation}
Solve the system of equations \eqref{pf:lem eq(1)} and \eqref{pf:lem eq(2)} for $g_\sigma$ and $g_\tau$ to obtain
\begin{equation}\label{pf:mult 2nd subc}
	g_\sigma = -k + m_\sigma + f_\tau, \qquad g_\tau=-k + m_\tau +f_\sigma.
\end{equation}
The result follows.
\end{proof}
\begin{remark}
By \eqref{pf:mult 2nd subc} we have
$$
	f_\sigma = k - m_\tau + g_\tau  , \qquad f_\tau=k-m_\sigma+g_\sigma.
$$	
Thus, the spectrum of $\Delta(x)$ is also determined by the spectrum of $\Delta_2(x)$.
\end{remark}

\begin{example}
Consider the Johnson graph $J(8,2)$ with vertex set $X={\Omega\choose2}$, the collection of all $2$-subsets of $\Omega=\{1,2,\ldots, 8\}$.
The graph $J(8,2)$ is strongly regular with parameters $(28, 12, 6, 4)$.
Fix a vertex $x \in X$. 
The spectrum of $J(8,2)$ is $\{12^1, 4^7, -2^{20}\}$ and the spectrum of the local graph $\Delta(x)$ is given by $\{6^1, 4^1, -2^5, 0^5\}$.
Using Lemma \ref{specdet}, we obtain the spectrum of $\Delta_2(x)$ of $J(8,2)$:
$$
	\{8^1, 4^0, -2^9, 2^5\}.
$$
\end{example}

\begin{proposition}\label{prop:SRGthin}
A connected strongly regular graph $\Gamma$ is thin.
\end{proposition}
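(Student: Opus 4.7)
The plan is to fix a base vertex $x \in X$ and prove that the standard $T(x)$-module $V$ decomposes as an orthogonal direct sum of thin irreducible $T(x)$-modules; because $T(x)$ is semisimple, every irreducible $T(x)$-module must then be isomorphic (as a $T(x)$-module) to one of the summands, and hence be thin. Since $x$ is arbitrary, $\Gamma$ is thin.

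To carry this out, I would first use the orthogonal decomposition $V=E_0^*V\oplus E_1^*V\oplus E_2^*V$ coming from the dual primitive idempotents. Next, on $E_1^*V$ I would select an orthogonal basis of eigenvectors $v_1=E_1^*\mathbf{1},v_2,\ldots,v_k$ for $\widetilde A_1=E_1^*AE_1^*$, ordered so that $v_1,\ldots,v_r$ have eigenvalues outside $\{\sigma,\tau\}$ while $v_{r+1},\ldots,v_k$ have eigenvalues in $\{\sigma,\tau\}$. For each $v_i$ with $2\le i\le r$, set $u_i=E_2^*AE_1^*v_i$; by Lemma \ref{int}(I)(i) these are eigenvectors of $\widetilde A_2$ with eigenvalues outside $\{\sigma,\tau\}$, and together with $u_1=E_2^*AE_1^*v_1$ they are mutually orthogonal in $E_2^*V$. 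I would then complete $\{u_1,\ldots,u_r\}$ to an orthogonal eigenbasis $\{u_1,\ldots,u_s\}$ of $\widetilde A_2$ on $E_2^*V$, where $s=n-k-1$; by Lemma \ref{int}(II) the added eigenvectors $u_{r+1},\ldots,u_s$ must have eigenvalues in $\{\sigma,\tau\}$ (otherwise $E_1^*AE_2^*u_h$ would be a nonzero vector in $E_1^*V$ orthogonal to every $v_i$, a contradiction).

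Now I would set $W_1=Tv_1$ (the primary module, of dimension $3$), $W_i=Tv_i$ for $2\le i\le k$, and $W'_h=Tu_h$ for $r+1\le h\le s$. By Lemma \ref{int}, each of these is a thin irreducible $T$-module, with bases and dimensions as listed in \eqref{eq:thinT-modSRG}. Counting dimensions gives $\dim W_1+\sum_{i=2}^r\dim W_i+\sum_{j=r+1}^k\dim W_j+\sum_{h=r+1}^s\dim W'_h=3+2(r-1)+(k-r)+(s-r)=1+k+s=n=\dim V$, and by construction distinct summands lie in orthogonal subspaces determined by their $E_i^*V$ components. Hence
\[
V=W_1\oplus\Bigl(\bigoplus_{i=2}^{r}W_i\Bigr)\oplus\Bigl(\bigoplus_{j=r+1}^{k}W_j\Bigr)\oplus\Bigl(\bigoplus_{h=r+1}^{s}W'_h\Bigr)
\]
is an orthogonal direct sum decomposition of $V$ into thin irreducible $T$-modules.

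Finally, since $T(x)$ is semisimple, any irreducible $T(x)$-module is isomorphic to a summand of $V$ in any such orthogonal decomposition, and thinness is preserved under $T$-module isomorphism. Therefore every irreducible $T(x)$-module is thin, and as $x\in X$ was arbitrary, $\Gamma$ is thin. The only subtle point that needs care is the orthogonality and dimension bookkeeping when completing the eigenbasis on $E_2^*V$—namely ensuring that the additional eigenvectors $u_{r+1},\ldots,u_s$ necessarily have eigenvalues in $\{\sigma,\tau\}$—which follows cleanly from Lemma \ref{int}(II) combined with the fact that $\{u_1,\ldots,u_r\}$ already accounts for all $\widetilde A_2$-eigenvectors with eigenvalues outside $\{\sigma,\tau\}$ (by the symmetry provided in Lemma \ref{2sub}).
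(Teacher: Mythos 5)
Your proposal is correct and follows essentially the same route as the paper: the paper's one-line proof simply points to the decomposition \eqref{eq:SRG V} and the table \eqref{eq:thinT-modSRG}, which are built in Section 5.1 from Lemma \ref{int} exactly as you reconstruct them, and your concluding semisimplicity argument (every irreducible $T(x)$-module is isomorphic to a summand of $V$, hence thin) is the intended one. Your care with the eigenbasis completion on $E^*_2V$ via Lemma \ref{2sub} matches the paper's implicit use of Lemma \ref{specdet}.
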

\begin{proof}
By \eqref{eq:thinT-modSRG}, $\Gamma$ is thin with respect to $x$.
By Proposition \ref{prop:classT-mod} and Lemma \ref{specdet}, the result follows.
\end{proof}

Recall $E^*_i=E^*_i(x)$ the $i$th dual primitive idempotent of $\Gamma$ ($i=0,1,2$).
Consider another connected strongly regular graph $\Gamma'$ with parameters $(n',k',a',c')$ and vertex set $X'$. 
Fix $x' \in X'$.
Let $A'$ be the adjacency matrix of $\Gamma'$ and $A^{*'}=A^{*'}(x')$ the dual adjacency matrix of $\Gamma'$ with respect to $x'$.
Write $E^{*'}_i = E^{*'}_i(x')$ for $0\leq i \leq 2$ and $T'=T'(x')$.
With these notations, we state the following two lemmas.

\begin{lemma}\label{lem:Tv=T'v' iso primary}
Suppose that the parameters of $\Gamma$ and $\Gamma'$ are same, that is, $(n,k,a,c)=(n',k',a',c')$.
Set $v= E^*_1\1$ and $v'=E^{*'}_1\1$.
Then $Tv$ and $T'v'$ are $T$-isomorphic.
\end{lemma}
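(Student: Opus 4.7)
The plan is to identify both $Tv$ and $T'v'$ with the respective primary irreducible modules and then match the matrices of the generators $A, A^*$ and $A', A^{*'}$ in their standard bases. First I would observe that, by \cite[Lemma 3.6]{ter1}, the sequence $E^*_0\1, E^*_1\1, E^*_2\1$ is a standard basis for the primary $T$-module $W$ of $\Gamma$. Since $v=E^*_1\1$ lies in $W$ and $W$ is irreducible, the $T$-submodule $Tv$ generated by $v$ must coincide with $W$. The identical argument shows $T'v'=W'$, the primary $T'$-module of $\Gamma'$, with standard basis $E^{*'}_0\1, E^{*'}_1\1, E^{*'}_2\1$.

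Next I would compute the matrices of $A$ and $A^*$ with respect to the standard basis of $W$ and compare them with those of $A'$ and $A^{*'}$ on $W'$. By the three-term recurrence \eqref{action: 3-term}, the matrix of $A$ restricted to $W$ is tridiagonal with entries built from the intersection numbers $\{k,k-a-1;1,c\}$, which are the same for $\Gamma$ and $\Gamma'$ since $(n,k,a,c)=(n',k',a',c')$. The matrix of $A^*$ is diagonal with entries $\theta^*_0, \theta^*_1, \theta^*_2$; because $(A^*_1)_{yy}=|X|(E_1)_{xy}$ and the primitive idempotent $E_1$ is itself determined by $(n,k,a,c)$ via \eqref{srg:eigval} and \eqref{srg:mult}, these dual eigenvalues also agree for $\Gamma$ and $\Gamma'$.

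Having matched the representing matrices of the generators $A, A^*$ of $T$ and $A', A^{*'}$ of $T'$, I would apply the natural submodule analog of Lemma \ref{lem:[A]_B=[A']_B'}: the $\mathbb{C}$-linear map $\rho: Tv \to T'v'$ defined by $E^*_i\1 \mapsto E^{*'}_i\1$ for $0\leq i \leq 2$, together with the algebra map $T\to T'$ determined by $A\mapsto A'$ and $A^*\mapsto A^{*'}$, yields the required $T$-isomorphism. Concretely, since $A$ and $A^*$ generate $T$, any polynomial relation satisfied by them transports verbatim to $A', A^{*'}$, so the algebra map is well defined, and $\rho$ intertwines it by construction on each basis vector.

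The only subtle point will be a consistent choice of $Q$-polynomial ordering on $\Gamma$ and $\Gamma'$, i.e., which of $\sigma, \tau$ is labelled $\theta_1$; but this is a convention fixed once the parameters $(n,k,a,c)$ are given, so it is not a genuine obstacle. In short, the argument is: equal parameters force equal intersection numbers and equal dual eigenvalues, which in turn force equal matrix representations on the two primary modules.
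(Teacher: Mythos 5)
Your proof is correct and follows essentially the same route as the paper's: identify $Tv$ and $T'v'$ with the primary modules, take the standard bases $\{E^*_i\1\}_{i=0}^2$ and $\{E^{*'}_i\1\}_{i=0}^2$, and observe that equal parameters force the same tridiagonal matrix for $A$ versus $A'$ and the same diagonal action on the dual side (the paper checks the intertwining against the $E^*_j$ directly, which is immediate, rather than against $A^*$ as you do, but this is equivalent). The only caveat is your aside that relations ``transport verbatim'' to give a well-defined algebra map $T\to T'$ --- that global claim is not justified by generation alone and is also not needed here, since the lemma only requires the restricted operators on the two three-dimensional modules to be represented by equal matrices.
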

\begin{proof}
Observe that $Tv$ is the primary $T$-module with a basis $\{E^*_iAE^*_1v\}^2_{i=0}$; see \eqref{eq:thinT-modSRG}.
Define the vectors $\{v_i\}^2_{i=0}$ by
$$
	v_0 = E^*_0AE^*_1v, \qquad 
	v_1 = a^{-1}E^*_1AE^*_1v, \qquad
	v_2 = c^{-1}E^*_2AE^*_1v.
$$
Observe that $v_i=E^*_i\1$ ($0\leq i \leq 2$), and thus $\{v_i\}^2_{i=0}$ is a standard basis for $Tv$.
Similarly, we define the vectors $\{v'_i\}^2_{i=0}$ by 
$$
	v'_0 = E^{*'}_0A'E^{*'}_1v', \qquad 
	v'_1 = a^{-1}E^{*'}_1A'E^{*'}_1v', \qquad
	v'_2 = c^{-1}E^{*'}_2A'E^{*'}_1v'.
$$
Then $\{v'_i\}^2_{i=0}$ is a standard basis for the primary $T'$-module $T'v'$.
Define a vector space isomorphism $\rho: Tv \to T'v'$ that sends $v_i$ to $v'_i$.
Since $\Gamma$ and $\Gamma'$ have same parameters $(n,k,a,c)$, from \eqref{action: 3-term} the matrix representing $A$ with respect to $\{v_i\}^2_{i=0}$ and the matrix representing $A'$ with respect to $\{v'_i\}^2_{i=0}$ are the same as the tridiagonal matrix with entries of the intersection numbers of $\Gamma$:
$$
	\begin{pmatrix}
	0 & k & 0 \\
	1 & a & k-a-1 \\
	0 & c & k-c
	\end{pmatrix}.
$$
Therefore, we have $(\rho A - A' \rho)v_i=0$ for $0\leq i \leq 2$.
Moreover, we readily check that $(\rho E^*_j - E^{*'}_j \rho)v_i=0$ for $0\leq i,j \leq 2$.
The result follows.
\end{proof}

\begin{lemma}\label{lem:Tv=T'v' iso}
Suppose that the parameters of $\Gamma$ and $\Gamma'$ are same, that is, $(n,k,a,c)=(n',k',a',c')$.
Let $i \in \{1,2\}$.
Let $v\in E^*_iV$ denote an eigenvector of $E^*_iAE^*_i$ with an eigenvalue $\lambda$ such that $\<\1,v\>=0$.
Let $v' \in E^{*'}_iV$ denote an eigenvector of $E^{*'}_iA'E^{*'}_i$ with an eigenvalue $\lambda'$ such that $\<\1,v'\>=0$. 
Then $\lambda=\lambda'$ if and only if $Tv$ and $T'v'$ are $T$-isomorphic.
\end{lemma}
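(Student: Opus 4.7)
The plan is to exploit the explicit structure of $Tv$ and $T'v'$ provided by Lemma \ref{int}, together with the observation that equality of parameters $(n,k,a,c)=(n',k',a',c')$ forces $\sigma$ and $\tau$ to coincide for the two graphs by \eqref{srg:eigval}; in particular, the dichotomy ``$\lambda\in\{\sigma,\tau\}$ versus $\lambda\notin\{\sigma,\tau\}$'' refers to the same pair of complex numbers on both sides, and the matrix formula \eqref{lem:mat(1)} depends only on $\lambda,\sigma,\tau$.

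For the forward direction, assume $\lambda=\lambda'$ and split into the two cases of Lemma \ref{int}. If $\lambda\in\{\sigma,\tau\}$, then by Lemma \ref{int}(I)(ii) or (II)(ii) the modules $Tv$ and $T'v'$ are one-dimensional, and the $T$-action (resp.\ $T'$-action) is completely described by $Av=\lambda v$, $E^*_i v=v$, and $E^*_j v=0$ for $j\neq i$, with an identical description on the primed side; the map $v\mapsto v'$ is then the desired isomorphism. If $\lambda\notin\{\sigma,\tau\}$, then by Lemma \ref{int}(I)(i) or (II)(i), the ordered pairs $\{v,\,E^*_{3-i}Av\}$ and $\{v',\,E^{*'}_{3-i}A'v'\}$ are bases of $Tv$ and $T'v'$, respectively. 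With respect to these bases, $A$ and $A'$ are both represented by the matrix \eqref{lem:mat(1)} with the common values $\lambda=\lambda'$, $\sigma$, $\tau$, while $E^*_i, E^*_{3-i}, E^*_0$ and their primed counterparts act by the same diagonal matrices $\mathrm{diag}(1,0)$, $\mathrm{diag}(0,1)$, and $0$. The submodule analogue of Lemma \ref{lem:[A]_B=[A']_B'} then produces a $T$-isomorphism sending the first basis to the second.

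For the converse, let $\rho\colon Tv\to T'v'$ be a $T$-module isomorphism, intertwining $A$ with $A'$ and each $E^*_j$ with $E^{*'}_j$. Applying $\rho$ to $E^*_i v=v$ yields $\rho(v)\in E^{*'}_i(T'v')$; by Lemma \ref{int} this space equals $\mathbb{C}v'$, so $\rho(v)=\alpha v'$ for some nonzero $\alpha\in\mathbb{C}$. Applying $\rho$ to the eigenvalue equation $E^*_i A E^*_i v=\lambda v$ and using the intertwining once more gives $\alpha\lambda' v'=E^{*'}_i A' E^{*'}_i \rho(v)=\rho(\lambda v)=\lambda\alpha v'$, so $\lambda=\lambda'$. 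The main subtlety, such as it is, lies in the forward direction: one must confirm case by case that the matrix representations of $A$ and of each $E^*_j$ on the two sides really do agree, but this is immediate from Lemma \ref{int} and the shared parameters, so no serious obstacle is expected.
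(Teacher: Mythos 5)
Your proposal is correct and follows essentially the same route as the paper: the forward direction uses the explicit bases and the matrix \eqref{lem:mat(1)} from Lemma \ref{int} (together with the fact that equal parameters force equal $\sigma,\tau$), and the converse extracts $\lambda=\lambda'$ from the intertwining relation. Your converse is in fact spelled out slightly more carefully than the paper's (which only asserts that the representing matrices must coincide), but the underlying argument is the same.
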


\begin{proof}
Set $i=1$. Suppose that $\lambda=\lambda'$.
If $\lambda \notin \{\sigma, \tau\}$, then by Lemma \ref{int}(I)(i) $Tv$ has a basis $v, E^*_2AE^*_1v$ and $T'v'$ has a basis $v', E^{*'}_2A'E^{*'}_1v'$.
Abbreviate $w=E^*_2AE^*_1v$ and $w'=E^{*'}_2A'E^{*'}_1v'$.
Define a vector space isomorphism $\rho: Tv \to T'v'$ by $\rho(v)=v'$ and $\rho(w)=w'$.
By Lemma \ref{int}(I)(i) and since $\lambda=\lambda'$, the matrix representing $A$ with respect to a basis $v, w$ and the matrix representing $A'$ with respect to a basis $v', w'$ are both equal to the matrix \eqref{lem:mat(1)}.
From this, it follows that $(\rho A - A'\rho)Tv=0$.
It is clear that $(\rho E^*_i - E^{*'}_i \rho)Tv=0$ for $0\leq i \leq 2$.
Thus, $Tv$ and $T'v'$ are $T$-isomorphic.
If $\lambda \in \{\sigma, \tau\}$, use Lemma \ref{int}(I)(ii) in a similar manner as demonstrated above to get the result.
Conversely, if $Tv$ and $T'v'$ are $T$-isomorphic, then the matrix representing $A$ with respect to a basis $v, w$ must be the same as the matrix representing $A'$ with respect to a basis $v', w'$. By this and using Lemma \ref{int}(I), we have $\lambda=\lambda'$.
For $i=2$, use Lemma \ref{int}(II). 
The result follows.
\end{proof}

We now characterize pseudo-vertex-transitive strongly regular graphs.

\begin{theorem}\label{prop:char_p.v.t}
A connected strongly regular graph $\Gamma$ is pseudo-vertex-transitive if and only if for every pair of vertices $x,y \in X$, the spectrum of the local graph $\Delta(x)$ of  $\Gamma$ at $x$ is equal to the spectrum of the local graph $\Delta(y)$ of $\Gamma$ at $y$.
\end{theorem}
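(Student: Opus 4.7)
The plan is to prove the two directions separately; the forward implication is essentially a direct appeal to an earlier result, while the bulk of the work lies in the converse. For the forward direction, if $\Gamma$ is pseudo-vertex-transitive then Lemma \ref{pvt=>Spec(D1)=Spec(D2)} applied with $i=1$ immediately yields $\Spec(\Delta(x)) = \Spec(\Delta(y))$ for all $x, y \in X$, and there is nothing more to do.

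For the converse, suppose $\Spec(\Delta(x)) = \Spec(\Delta(y))$ for all $x, y \in X$. Since the parameters $(n,k,a,c)$ are independent of the base vertex, Lemma \ref{specdet} immediately gives $\Spec(\Delta_2(x)) = \Spec(\Delta_2(y))$ as well. Fix $x, y \in X$. By Proposition \ref{prop:SRGthin} the graph $\Gamma$ is thin, so the standard module $V$ decomposes as an orthogonal direct sum of thin irreducible $T(x)$-modules of the four types listed in \eqref{eq:thinT-modSRG}, and similarly for $T(y)$. By Proposition \ref{prop:classT-mod}, each non-primary thin irreducible $T(x)$-module is classified up to $T$-isomorphism by the eigenvalue of $\widetilde A_1$ or $\widetilde A_2$ with which it is associated; moreover, the multiplicity of each isomorphism class coincides with the multiplicity of the corresponding local eigenvalue in $\Spec(\Delta(x))$ or in $\Spec(\Delta_2(x))$. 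The hypothesis together with Lemma \ref{specdet} then forces the multiplicities of the isomorphism classes appearing in the $T(x)$- and $T(y)$-decompositions of $V$ to coincide.

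The final step is to build $\rho: V \to V$ module-by-module. For the primary $T(x)$- and $T(y)$-modules, Lemma \ref{lem:Tv=T'v' iso primary} (applied with $\Gamma' = \Gamma$ and base vertex $y$ in place of $x'$) produces a $T$-isomorphism. For each non-primary irreducible $T(x)$-summand of $V$ associated with a local eigenvalue $\lambda$, the multiplicity match allows us to pair it with a non-primary irreducible $T(y)$-summand of $V$ associated with the same $\lambda$, and Lemma \ref{lem:Tv=T'v' iso} provides a $T$-isomorphism between the two. Taking the orthogonal direct sum of these pairwise isomorphisms yields a $\mathbb{C}$-vector space isomorphism $\rho: V \to V$ satisfying $(\rho A - A \rho)V = 0$ and $(\rho E^*_i(x) - E^*_i(y) \rho)V = 0$ for $0 \leq i \leq 2$, which is exactly the pseudo-vertex-transitivity condition.

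The main obstacle is essentially a bookkeeping one: one must ensure that the module-by-module isomorphisms can be patched into a single $\rho$ that intertwines both $A$ and the dual primitive idempotents simultaneously. This works cleanly because the classification in Proposition \ref{prop:classT-mod} and the explicit tridiagonal action recorded in Lemma \ref{int} depend only on the associated local eigenvalues, so once the spectral hypothesis guarantees matching multiplicities the pairing and direct sum are forced.
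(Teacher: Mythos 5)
Your proposal is correct and follows essentially the same route as the paper: both directions use Lemma \ref{pvt=>Spec(D1)=Spec(D2)} for the forward implication and, for the converse, combine Lemma \ref{specdet} with the module-by-module pairing via Lemma \ref{lem:Tv=T'v' iso primary} and Lemma \ref{lem:Tv=T'v' iso} against the decomposition \eqref{eq:SRG V}. Your explicit remark about matching multiplicities of isomorphism classes is a point the paper leaves implicit, but it is the same argument.
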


\begin{proof}
If $\Gamma$ is pseudo-vertex-transitive, by Lemma \ref{pvt=>Spec(D1)=Spec(D2)} it immediately follows that $\Spec(\Delta(x))=\Spec(\Delta(y))$.
Conversely, for $x,y \in X$ suppose that $\Spec(\Delta(x))=\Spec(\Delta(y))$.
By Lemma \ref{specdet}, it follows that $\Spec(\Delta_2(x))=\Spec(\Delta_2(y))$.
Using these, we will show that the standard $T(x)$-module and the standard $T(y)$-module are isomorphic.
First, choose an eigenvalue $\lambda \in \Delta(x)=\Delta(y)$.
Let $v \in E^*_1(x)V$ denote an eigenvector of $E^*_1(x)AE^*_1(x)$ corresponding to $\lambda$, and let $v'\in E^*_1(y)V$ denote an eigenvector of $E^*_1(y)AE^*_1(y)$ corresponding to $\lambda$.
If $\lambda=a$, then $Tv$ and $T'v'$ are the primary modules of $T(x)$ and $T(y)$, respectively.
By Lemma \ref{lem:Tv=T'v' iso primary}, $T(x)v$ and $T(y)v'$ are isomorphic.
If $\lambda\ne a$, we observe that $T$-modules $T(x)v$ and $T(y)v'$ have both endpoint $1$.
By Lemma \ref{lem:Tv=T'v' iso}, $T(x)v$ and $T(y)v'$ are isomorphic.
Next, choose an eigenvalue $\mu \in \Delta_2(x)=\Delta_2(y)$ such that $\mu \in \{\sigma, \tau\}$.
Let $u \in E^*_2(x)V$ denote an eigenvector of $E^*_2(x)AE^*_2(x)$ corresponding to $\mu$, and let $u'\in E^*_2(y)V$ denote an eigenvector of $E^*_2(y)AE^*_2(y)$ corresponding to $\mu$.
Observe that both $T$-modules $T(x)v$ and $T(y)v'$ have endpoint $2$ and diameter $0$.
By Lemma \ref{lem:Tv=T'v' iso}, $T(x)u$ and $T(y)u'$ are isomorphic.
By these comments and \eqref{eq:SRG V}, the standard modules for $T(x)$ and $T(y)$ are isomorphic.
Since $x,y$ is an arbitrary pair of vertices in $X$, $\Gamma$ is pseudo-vertex-transitive.
\end{proof}

In the following, we characterize connected strongly regular graphs which are $T$-isomorphic.

\begin{theorem}\label{T-isomorphism}
Let $\Gamma$ and $\Gamma'$ be connected strongly regular graphs with parameters $(n,k,a,c)$ and $(n',k',a',c')$, respectively.
Then $\Gamma$ and $\Gamma'$ are $T$-isomorphic if and only if both (i) $(n,k,a,c)=(n',k',a',c')$ and (ii) for every pair of vertices $x\in X$ and $x' \in X'$ the spectrum of the local graph $\Delta(x)$ of  $\Gamma$ is equal to the spectrum of the local graph $\Delta'(x')$ of $\Gamma'$.
\end{theorem}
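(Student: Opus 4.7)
The plan is to mimic the structure of the proof of Theorem \ref{prop:char_p.v.t}, simply replacing references to a single graph $\Gamma$ (with two base vertices $x,y$) by references to the two graphs $\Gamma,\Gamma'$ (with base vertices $x,x'$). The forward direction should be essentially immediate from lemmas already proved, and the backward direction should follow by matching up irreducible $T$-modules according to the local spectrum.

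For the forward direction, suppose $\Gamma$ and $\Gamma'$ are $T$-isomorphic. By Lemma \ref{lem: G G' T-iso => int array}, the intersection arrays of $\Gamma$ and $\Gamma'$ coincide; since a connected strongly regular graph with parameters $(n,k,a,c)$ has intersection array $\{k,k-a-1;1,c\}$ and $n$ is determined by $(k,a,c)$ via the standard counting identity $k(k-a-1)=(n-k-1)c$, condition (i) follows. Condition (ii) is then immediate from Lemma \ref{G =iso G' =>Spec(D1)=Spec(D2)} applied with $i=1$.

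For the backward direction, fix vertices $x \in X$ and $x' \in X'$. Write $T=T(x)$, $T'=T'(x')$, $V=\mathbb{C}^X$, $V'=\mathbb{C}^{X'}$. Using (i), Lemma \ref{specdet} together with (ii) yields $\mathrm{Spec}(\Delta_2(x)) = \mathrm{Spec}(\Delta'_2(x'))$. I will construct a $T$-isomorphism between the standard modules by matching irreducible summands of the decomposition \eqref{eq:SRG V}. First, the primary $T$-module $Tv$ (with $v=E^*_1\mathds{1}$) and the primary $T'$-module $T'v'$ (with $v'=E^{*'}_1\mathds{1}$) are $T$-isomorphic by Lemma \ref{lem:Tv=T'v' iso primary}, since (i) holds. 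Next, for each local eigenvalue $\lambda$ appearing in $\mathrm{Spec}(\Delta(x)) = \mathrm{Spec}(\Delta'(x'))$ with the same multiplicity, pick eigenvectors of $\widetilde{A}_1$ in $E^*_1V$ and of $\widetilde{A}'_1$ in $E^{*'}_1V'$ orthogonal to $\mathds{1}$; Lemma \ref{lem:Tv=T'v' iso} (applied with $i=1$) shows that the corresponding irreducible modules $Tv$ and $T'v'$ are $T$-isomorphic. Finally, for each eigenvalue $\mu \in \{\sigma,\tau\}$ appearing in $\mathrm{Spec}(\Delta_2(x)) = \mathrm{Spec}(\Delta'_2(x'))$ whose eigenvectors are not captured in the endpoint-$1$ modules, invoke Lemma \ref{lem:Tv=T'v' iso} with $i=2$ to match endpoint-$2$, diameter-$0$ modules. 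Summing the resulting isomorphisms across the orthogonal decomposition \eqref{eq:SRG V} for $V$ and the analogous decomposition for $V'$ produces a $\mathbb{C}$-vector space isomorphism $\rho:V\to V'$ that intertwines $A$ with $A'$ and $E^*_i(x)$ with $E^{*'}_i(x')$ for $0\le i\le 2$. Since $A^*(x)$ and $A^{*'}(x')$ are $\mathbb{C}$-linear combinations of $\{E^*_i(x)\}$ and $\{E^{*'}_i(x')\}$ respectively, this gives a $T$-algebra isomorphism and hence the required $T$-isomorphism of standard modules. As $x,x'$ were arbitrary, $\Gamma$ and $\Gamma'$ are $T$-isomorphic.

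The main potential obstacle is bookkeeping: one must be certain that the equalities of spectra $\mathrm{Spec}(\Delta(x))=\mathrm{Spec}(\Delta'(x'))$ and $\mathrm{Spec}(\Delta_2(x))=\mathrm{Spec}(\Delta'_2(x'))$ produce matchings of eigenvectors that exhaust both decompositions \eqref{eq:SRG V}, with no leftover summands. This is where the multiplicity counts in the proof of Lemma \ref{specdet} (equation \eqref{pf:mult 2nd subc}) are essential, because they guarantee that once the endpoint-$1$ modules are paired up via $\mathrm{Spec}(\Delta(x))=\mathrm{Spec}(\Delta'(x'))$, the remaining endpoint-$2$ eigenvectors with eigenvalues in $\{\sigma,\tau\}$ match in multiplicity on both sides. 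With that dimension count in hand, the rest of the argument is a direct transplant of the proof of Theorem \ref{prop:char_p.v.t}.
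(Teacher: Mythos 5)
Your proposal is correct and follows essentially the same route as the paper: the forward direction via Lemmas \ref{lem: G G' T-iso => int array} and \ref{G =iso G' =>Spec(D1)=Spec(D2)}, and the backward direction by first establishing $\mathrm{Spec}(\Delta_2(x))=\mathrm{Spec}(\Delta'_2(x'))$ and then matching irreducible summands of \eqref{eq:SRG V} using Lemmas \ref{lem:Tv=T'v' iso primary} and \ref{lem:Tv=T'v' iso}. The only cosmetic difference is that you obtain the second-subconstituent spectrum equality by citing Lemma \ref{specdet}, whereas the paper re-derives it inline with the same trace/multiplicity computation.
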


\begin{proof}
Suppose that $\Gamma$ and $\Gamma'$ are $T$-isomorphic.
By Lemma \ref{lem: G G' T-iso => int array}, the intersection arrays of $\Gamma$ and $\Gamma'$ are the same. 
From this, it follows $(n,k,a,c)=(n',k',a',c')$.
Also, by Lemma \ref{G =iso G' =>Spec(D1)=Spec(D2)}, it follows $\Spec(\Delta(x))=\Spec(\Delta'(x'))$.
Conversely, suppose that both (i) and (ii) hold.
We claim that $\Spec(\Delta_2(x))=\Spec(\Delta_2'(x'))$.
We denote by 
\begin{align*}
	\Spec{(\Delta_2(x))} & =\{(n-k)^1, \sigma^{\alpha_\sigma}, \tau^{\alpha_\tau}, \lambda_1^{m_1}, \ldots, \lambda_s^{m_s}\}, \\
	\Spec{(\Delta'_2(x'))} & =\{(n-k)^1, \sigma^{\beta_\sigma}, \tau^{\beta_\tau}, {\lambda_1'}^{n_1}, \ldots, {\lambda'_t}^{n_t}\},
\end{align*}
where $\sigma, \tau$ are eigenvalues of $\Gamma$ from \eqref{srg:eigval}.
Since $\Spec(\Delta(x))=\Spec(\Delta'(y))$ and by Lemma \ref{2sub}, it follows that $s=t$ and $m_i=n_i$ and $\lambda_i=\lambda'_i$ for $1\leq i \leq s$.
The sum of multiplicities of all eigenvalues of $\Delta_2(x)$ is $n-1-k$, and the sum of all eigenvalues of $\Delta_2(x)$ is zero.
Moreover, the sum of multiplicities of all eigenvalues of $\Delta'_2(y)$ is $n'-1-k'(=n-1-k)$ and the sum of all eigenvalues of $\Delta'_2(y)$ is zero.
From these comments, we have
\begin{align*}
	\alpha_\sigma + \alpha_\tau &= \beta_\sigma+\beta_\tau, \\
	\sigma \alpha_\sigma + \tau \alpha_\tau &= \sigma \beta_\sigma+\tau\beta_\tau.
\end{align*}
Solving these equations using the fact $\sigma\ne \tau$, we find that $\alpha_\sigma=\beta_\sigma$ and $\alpha_\tau=\beta_\tau$.
The desired claim follows.
Now, we show that the standard $T(x)$-module and the standard $T'(x')$-module are $T$-isomorphic. It is similar to the proof of Theorem \ref{prop:char_p.v.t}; use Lemma \ref{lem:Tv=T'v' iso primary} and Lemma \ref{lem:Tv=T'v' iso}.
Since $x\in X$ and $x'\in X'$ are arbitrary, it follows that $\Gamma$ and $\Gamma'$ are $T$-isomorphic.
\end{proof}

\subsection{Examples}

In this subsection, we give some concrete examples of cospectral strongly regular graphs and discuss their pseudo-vertex transitivity.

\begin{example}
Let $\Gamma$ denote the \emph{Shrikhande graph}, a Cayley graph of $\mathbb{Z}_4\times\mathbb{Z}_4$ relative to the generating set $\{\pm(1,0), \pm(0,1), \pm(1,1)\}$.
Let $\Gamma'$ denote the \emph{$4\times 4$-grid graph}, the line graph of $K_{4,4}$.
The graphs $\Gamma$ and $\Gamma'$ are cospectral strongly regular with parameters $(16,6,2,2)$ and the eigenvalues $k=6, \sigma=2, \tau=-2$.
Moreover, $\Gamma$ and $\Gamma'$ are both vertex-transitive and thus pseudo-vertex-transitive.
The following table shows the spectrum of the local graph $\Delta_G$ of $G\in \{\Gamma, \Gamma'\}$ and the dimension of the Terwilliger algebra $T_G$ of $G$.
\begin{gather*}
\text{Strongly regular graph with $(16,6,2,2)$}\\
\renewcommand{\arraystretch}{1.2}
\begin{tabular}{cccccc}
\hline
graph $G$ & \quad & $\Spec(\Delta_G)$ &\quad & $\dim{T_G}$\\
\hline 
$\Gamma$ & \quad & $\{2^1, 1^2, (-1)^2, (-2)^1\}$ & \quad & $20$\\
$\Gamma'$ & \quad & $\{2^2,(-1)^4\}$ & \quad& $15$ \\
\hline
\end{tabular}.
\end{gather*}
By Theorem \ref{T-isomorphism}, it follows that $\Gamma$ and $\Gamma'$ are not $T$-isomorphic.
\end{example}

\begin{example}[cf. {\cite[Example 4.2]{2yama}}]\label{ex:chang}
Recall the Johnson graph $\Gamma=J(8,2)$ with vertex set $X={\Omega \choose 2}$, where $\Omega=\{1,2,\ldots, 8\}$.
The automorphism group $\mathrm{Aut}(\Gamma)$\footnote{Note that $\mathrm{Aut}(\Gamma)=S_8$.} acts on $X$ transitively, and hence $\Gamma$ is pseudo-vertex-transitive.
We recall the \emph{Chang graphs}. 
The three Chang graphs $\Gamma'$, $\Gamma''$, $\Gamma'''$ can be obtained from $\Gamma$ by Seidel switching with respect to one of the sets (cf. \cite[pp.105]{1989BCN})
\begin{itemize}[itemsep=0.5pt]
	\item[(i)] $\{\{1,5\},\{2,6\},\{3,7\},\{4,8\}\}$ for $\Gamma'$, 
	\item [(ii)] $\{\{1,2\}, \{2,3\},\{3,4\},\{4,5\},\{5,6\},\{6,7\},\{7,8\},\{8,1\}\}$ for $\Gamma''$,
	\item [(iii)] $\{\{1,2\}, \{2,3\},\{3,1\},\{4,5\},\{5,6\},\{6,7\},\{7,8\},\{8,4\}\}$ for $\Gamma'''$.
\end{itemize}
Note that none of three Chang graphs (i)--(iii) is vertex-transitive.
For case (i), the action of $\mathrm{Aut}(\Gamma')$ on $X$ has two orbits:
\begin{equation*}
	U'_1=\{\{1,5\}, \{2,6\},\{3,7\},\{4,8\}\}, \qquad U'_2=X \setminus U_1'.
\end{equation*}
For case (ii), the action of $\mathrm{Aut}(\Gamma'')$ on $X$ has two orbits:
\begin{equation*}
	U''_1=\{\{1,5\}, \{2,6\},\{3,7\},\{4,8\}\}, \qquad U''_2=X \setminus U_1''.
\end{equation*}
For case (iii), the action of $\mathrm{Aut}(\Gamma''')$ on $X$ has three orbits\footnote{In \cite[Example 4.2]{2yama}, the orbit $U'''_1$ was missed.}
\begin{equation*}
	U'''_1=\{\{1,2\}, \{2,3\},\{3,1\}\}, \quad 
	U'''_2=\{(i,j) \mid 4 \leq i<j\leq 8\}, \quad
	U'''_3=X\setminus \{U'''_1 \cup U'''_2\}.
\end{equation*}


We remark that the Johnson graph $J(8,2)$ and three Chang graphs (i)--(iii) are cospectral strongly regular with parameters $(28,12,6,4)$ and the eigenvalues $k=12$, $\sigma=4$, $\tau=-2$.
In the following table we display the spectrum of the local graph $\Delta_G$ of $G \in \{\Gamma, \Gamma', \Gamma'', \Gamma'''\}$ and the dimension of the Terwilliger algebra $T_G$ of $G$.
\begin{gather*}
\text{Strongly regular graph with $(28,12,6,4)$}\\
\renewcommand{\arraystretch}{1.3}
\begin{tabular}{cccccc}
\hline
$G$ & base vertex $x$ & $\Spec(\Delta_G(x))$ & $\dim{T_G(x)}$ \\
\hline
$\Gamma$ & $x \in X$ & $\{6^1, 0^5, 4^1, (-2)^5\}$ & $16$\\
\hline
\multirow{2}{*}{$\Gamma'$}	
	& $x \in U'_1$ &  $\{6^1, 2^3, 0^2, (-2)^6\}$ & $20$  \\
	& $x \in U'_2$ & $\{6^1, (1+\sqrt{5})^1, 2^1, 0^3, (1-\sqrt{5})^1,(-2)^5\}$ & $27$\\
\hline
\multirow{2}{*}{$\Gamma''$}	
	& $x \in U''_1$ &  $\{6^1, (1+\sqrt{3})^2, 0^2, (1-\sqrt{3})^2,(-2)^5\}$ & $23$ \\
	& $x \in U''_2$ & $\{6^1, (1+\sqrt{3})^1, 2^1, \sqrt{2}^1, 0^1, (1-\sqrt{3)}^1, (-\sqrt{2})^1, (-2)^5\}$ & 35  \\
\hline
\multirow{3}{*}{$\Gamma'''$}	
	& $x \in U'''_1$ & $\{6^1, 3^1, (\frac{1+\sqrt{5}}{2})^2,  (\frac{1-\sqrt{5}}{2})^2, (-1)^1, (-2)^5\}$ & $27$  \\
	& $x \in U'''_2$ &  $\{6^1, (\frac{1+\sqrt{13}}{2})^2, 1^2,  (\frac{1-\sqrt{13}}{2})^2, (-2)^5\}$ & $23$\\
	& $x \in U'''_3$ & $\{6^1, (1+\sqrt{3})^1, 2^1, \sqrt{2}^1, 0^1, (1-\sqrt{3})^1, (-\sqrt{2})^1, (-2)^5\}$ & $35$ \\
\hline
\end{tabular}
\end{gather*}
From this table and by Proposition \ref{prop:char_p.v.t}, we find that the three Chang graphs are not pseudo-vertex-transitive.
We finish this example with comments. 
\begin{itemize}[itemsep=0.5pt]
	\item[(i)] For $x \in U'_2$ and $y\in U'''_1$, the Terwilliger algebras $T_{\Gamma'}(x)$ and  $T_{\Gamma'''}(y)$ have the same dimension $27$, but they are not $T$-algebra isomorphic since their local spectra are not equal to each other. 
	Note that the dimension sequences of $T_{\Gamma'}(x)$ and  $T_{\Gamma'''}(y)$ are equal to $(1,4,1,4)$, and thus they are semisimple algebra isomorphic to each other.
	\item[(ii)] In a similar way to (i), for $x \in U''_1$ and $y\in U'''_2$, the Terwilliger algebras $T_{\Gamma''}(x)$ and $T_{\Gamma'''}(y)$ are not $T$-algebra isomorphic, but their dimension sequences are equal to $(1,3,1,3)$. Thus they are semisimple algebra isomorphic to each other.
	\item[(iii)] For $x \in U''_2$ and $y\in U'''_3$, the local graphs $\Delta_{\Gamma''}(x)$ and $\Delta_{\Gamma'''}(y)$ have the same spectrum. 
	Therefore, the Terwilliger algebras $T_{\Gamma''}(x)$ and  $T_{\Gamma'''}(y)$ are $T$-algebra isomorphic.
\end{itemize}
\end{example}

\begin{example}\label{ex:GQ}
A \emph{generalized quadrangle} is an incidence structure such that:
(i) any two points are on at most one line, and hence any two lines meet in at most one point,
(ii) If $p$ is a point not on a line $L$, then there is a unique point $p'$ on $L$ such that $p$ and $p'$ are collinear.
If every line contains $s+1$ points, and every point lies on $t+1$ lines, we say that the generalized quadrangle has order $(s,t)$, denoted by $GQ(s,t)$.

The \emph{point graph} of a generalized quadrangle is the graph with the points of the quadrangle as its vertices, with two points adjacent if and only if they are collinear. 
The point graph $\Gamma$ of a $GQ(s,t)$ is strongly regular with parameters
$((s+1)(st+1), s(t+1), s-1, t+1)$ and eigenvalues $k=s(t+1), \sigma=s-1, \tau=-t-1$, with respective multiplicities
\begin{equation*}
	1, \quad \frac{st(s+1)(t+1)}{s+t}, \quad \frac{s^2(st+1)}{s+t}.
\end{equation*}
By construction, $\Gamma$ is locally a disjoint union of the $(t+1)$ cliques of size $s$.
Thus the spectrum of a local graph $\Delta$ of $\Gamma$ is determined by $s, t$:
\begin{equation*}
	\Spec(\Delta)=\{(s-1)^{t+1}, {-1}^{(s-1)(t+1)}\}.
\end{equation*}
From these comments and by Theorem \ref{prop:char_p.v.t}, it follows that $\Gamma$ is pseudo-vertex-transitive.
Moreover, the point graph $\Gamma'$ of a $GQ(s',t')$ is $T$-isomorphic to $\Gamma$ if and only if $(s,t)=(s',t')$.
The dimension of the Terwilliger algebra $T(x)$ of $\Gamma$ with respect to a vertex $x$ is given as follows.
\begin{equation*}
	\dim T(x)=
	\begin{cases}
	10, & \enskip \text{if} \enskip t=s=1;\\
	15, &\enskip \text{if}  \enskip t=1,~s \neq 1, \enskip \text{or} \enskip t \neq 1,~s\neq 1,~s^2=t;\\
	11, &\enskip \text{if} \enskip  t \neq 1,~s=1;\\
	16, &\enskip \text{if} \enskip t \neq 1,~s\neq 1,~s^2\neq t.
	\end{cases}
\end{equation*}
\end{example}
\noindent

We finish this example with a comment.
The isomorphism class of the Terwilliger algebra for the point graph of a $GQ(s,t)$ only depends on $s$ and $t$, and thus it is pseudo-vertex-transitive.
However, there exist generalized quadrangles whose point graphs are not vertex-transitive. For example, there are at least three pairwise non-isomorphic $GQ(q,q^2)$ of which at least one is not vertex-transitive, provided that $q=2^{2h+1}$, $h\ge 1$; see \cite[Page 45]{pt}.

\section{Preliminaries: antipodal distance-regular graphs}\label{sec:antipodal 2-cover}
In this section, we recall some preliminaries concerning antipodal distance-regular graphs, which will be used in the next sections.
Let $\Gamma$ denote a distance-regular graph with vertex set $X$ and diameter $D\geq 3$. 
The graph $\Gamma$ is called \emph{antipodal} whenever the vertices at distance $D$ from a given vertex are all at distance $D$ from each other.
Note that $\Gamma$ is antipodal if and only if $\Gamma$ has the intersection array $\{b_0, b_1, \ldots, b_{D-1}; c_1, c_2, \ldots, c_D\}$, where $b_i=c_{D-i}$ for $0 \leq i \leq D-1$, except possibly $i=\lfloor D/2\rfloor$; cf. \cite[Proposition 4.2.2]{1989BCN}.
Note also that being at distance $D$ or zero induces an equivalence relation on $X$.
With respect to this equivalence relation, the equivalence classes are called \emph{antipodal classes}. 
We say $\Gamma$ is an \emph{antipodal $r$-cover} if the equivalence class has size $r$.
Let $\Gamma$ be an antipodal $2$-cover (or \emph{double cover}).
Then for each vertex $x \in X$ there is a unique vertex $y \in X$ with $\partial(x,y)=D$.
Such a unique vertex is called the \emph{antipode} of $x$, denoted by $\hat{x}$.

\begin{lemma}\label{T(x)-mod T(x^)-mod}
Let $\Gamma$ be an antipodal double cover with diameter $D\geq 3$.
Let $x$ be a vertex of $\Gamma$ and $\hat{x}$ the antipode of $x$.
Then $W$ is an irreducible $T(x)$-module if and only if $W$ is an irreducible $T(\hat{x})$-module.
\end{lemma}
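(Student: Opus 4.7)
The plan is to show the stronger fact that $T(x) = T(\hat{x})$ as subalgebras of $\mathrm{Mat}_X(\mathbb{C})$; then the claim about irreducible modules is immediate. The key ingredient is the standard fact that in an antipodal double cover of diameter $D$, the antipodal map $y \mapsto \hat{y}$ is an automorphism of $\Gamma$, and consequently
\[
\partial(\hat{x}, y) = D - \partial(x, y) \qquad (y \in X).
\]
I would first justify this: given $y$ with $\partial(x,y)=i$, applying the antipodal automorphism yields $\partial(\hat{x}, \hat{y}) = i$, and the triangle inequality together with $\partial(x,\hat{x}) = D$ and $\partial(y,\hat{y}) = D$ forces $\partial(\hat{x}, y) = D - i$. (Equivalently, one uses $p^D_{ij} \ne 0 \Rightarrow j = D-i$ for antipodal double covers.)

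Next, I would read off from this distance identity that the dual primitive idempotents satisfy
\[
E^*_i(\hat{x}) = E^*_{D-i}(x) \qquad (0 \leq i \leq D),
\]
since $(E^*_i(\hat{x}))_{yy}=1$ iff $\partial(\hat{x},y)=i$ iff $\partial(x,y)=D-i$ iff $(E^*_{D-i}(x))_{yy}=1$. Hence the dual Bose-Mesner algebras coincide: $M^*(\hat{x}) = M^*(x)$.

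Since the adjacency matrix $A$ and the dual Bose-Mesner algebra generate the Terwilliger algebra, and $A$ does not depend on the base vertex, we conclude
\[
T(\hat{x}) = \langle A, M^*(\hat{x}) \rangle = \langle A, M^*(x) \rangle = T(x).
\]
Being an irreducible $T(x)$-module is a property that depends only on the subalgebra $T(x) \subseteq \mathrm{Mat}_X(\mathbb{C})$, so $W \subseteq V$ is an irreducible $T(x)$-module if and only if $W$ is an irreducible $T(\hat{x})$-module. No obstacle arises beyond verifying the distance identity, which is a routine consequence of the antipodal double cover hypothesis.
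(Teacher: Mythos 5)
Your proposal is correct and follows essentially the same route as the paper: both arguments hinge on the identity $E^*_i(\hat{x}) = E^*_{D-i}(x)$, from which invariance of $W$ under $T(x)$ and under $T(\hat{x})$ are seen to be the same condition. You simply make explicit the intermediate observation that $T(x)=T(\hat{x})$ as subalgebras of $\mathrm{Mat}_X(\mathbb{C})$ and justify the distance identity $\partial(\hat{x},y)=D-\partial(x,y)$, which the paper leaves implicit.
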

\begin{proof}
Let $W$ denote an irreducible $T(x)$-module.
Then $W$ is invariant under the actions of $A$ and $E^*_j(x)$ for $0 \leq j \leq D$.
Since $\Gamma$ is an antipodal double cover, it follows that $E^*_j(x) = E^*_{D-j}(\hat{x})$ for $0 \leq j\leq D$.
Thus $W$ is invariant under the actions of $A$ and $E^*_h(\hat{x})$ for $0 \leq h \leq D$.
The result follows.
\end{proof}

\noindent
Suppose that $\Gamma$ is $Q$-polynomial with respect to the ordering $E_0, E_1, \ldots, E_D$. 
Then $\Gamma$ is antipodal if and only if $\Gamma$ is \emph{dual bipartite}, that is, the Krein parameters of $\Gamma$ satisfy $q^i_{1i}=0$ for $0\leq i \leq D$; cf. \cite[Theorem 3]{1988Ter} (or \cite[Theorem 4.2]{2001PascasioLAA}).
The antipodal $Q$-polynomial distance-regular graphs are completely classified for diameter $\geq 6$ and valency $\geq 3$ cases; cf. \cite{1996DicTer}.

\begin{lemma}[cf. {\cite[Corollary 11.5]{1999CaughmanIV}}]
Let $\Gamma$ be an antipodal $Q$-polynomial distance-regular graph with $D\geq 3$.
Fix a vertex $x$ of $\Gamma$ and write $T=T(x)$.
Let $W$ be an irreducible $T$-module with diameter $d$.
Recall the scalars $\{a_i(W)\}^d_{i=0}$ from \eqref{scalars:ai,wi}.
Then 
\begin{equation}\label{eq:a_i = a_(d-i)}
	a_i(W) = a_{d-i}(W) \qquad (0 \leq i \leq d).
\end{equation}
\end{lemma}
\begin{proof}
Apply the result of \cite[Corollary 11.5]{1999CaughmanIV} to the dual bipartite $Q$-polynomial distance-regular graphs.
\end{proof}


We give a remark.
Caughman \cite{1999CaughmanIV} showed that a \emph{bipartite} $Q$-polynomial distance-regular graph $\Gamma$ with $D\geq 3$ is thin and pseudo-vertex-transitive.
More specifically, he proved that any irreducible $T(x)$-module is both thin and dual thin \cite[Lemma 9.2]{1999CaughmanIV} and that the isomorphism class of an irreducible $T(x)$-module is completely determined by its endpoint and diameter \cite[Theorem 13.1]{1999CaughmanIV}. 
He also found a recurrence that gives the multiplicities of irreducible $T(x)$-modules; this recurrence is expressed as the intersection numbers, eigenvalues, and dual eigenvalues of $\Gamma$ \cite[Section 14]{1999CaughmanIV}. 
Thus the isomorphism class of the standard $T$-module of $\Gamma$ does not depend on the base vertex $x$.
By these comments, $\Gamma$ is thin and pseudo-vertex-transitive.
Dualizing the results of \cite{1999CaughmanIV}, we can find that the dual bipartite (or antipodal) $Q$-polynomial distance-regular graphs are thin and pseudo-vertex-transitive.
These comments are summarized as follows.
\begin{proposition}[{\cite{1999CaughmanIV}}]\label{prop:antiDRG pvt}
A bipartite and/or dual bipartite $Q$-polynomial distance-regular graph with $D\geq 3$ is thin and pseudo-vertex-transitive.
\end{proposition}

\noindent
In the next two sections, we will discuss pseudo-vertex transitivity of the antipodal $Q$-polynomial distance-regular graphs with diameter three and four. 
We can see that these graphs are pseudo-vertex-transitive by Proposition \ref{prop:antiDRG pvt}, but we will prove it in detail to explain how the thin property plays in determining their pseudo-vertex transitivity.

We now recall the tight distance-regular graphs.
Let $\Gamma$ denote a distance-regular graph with diameter $D\geq 3$ and eigenvalues $k=\theta_0 > \theta_1 > \cdots > \theta_D$.
In \cite{2000JKT}, Juri\v{s}i\'{c} \textit{et al.} proved that the intersection numbers $a_1, b_1$ satisfy
\begin{equation}\label{fund.equality}
	\left(\theta_1+\frac{k}{a_1+1}\right)\left(\theta_D+\frac{k}{a_1+1}\right) \geq - \frac{ka_1b_1}{(a_1+1)^2}.
\end{equation}
We say $\Gamma$ is \emph{tight} whenever $\Gamma$ is not bipartite and equality holds in \eqref{fund.equality}.
Define 
\begin{equation}\label{local eigval}
	b^+:=-1-\frac{b_1}{1+\theta_D}, \qquad b^-:=-1-\frac{b_1}{1+\theta_1}.
\end{equation}
We say $\Gamma$ is \emph{tight with respect to $x$} whenever every irreducible $T(x)$-module with endpoint $1$ is thin with local eigenvalue $b^+$ or $b^-$.
The tight distance-regular graphs are characterized by using the Terwilliger algebra \cite{2002GoTer}.
We state the following lemma.

\begin{lemma}[{\cite[Theorem 13.6]{2002GoTer}}, {\cite[Theorem 12.6]{2000JKT}}]\label{lem:tight char}
Let $\Gamma$ denote a distance-regular graph with diameter $D\geq 3$.
The following (i)--(iii) are equivalent.
\begin{enumerate}
	\item[(i)] $\Gamma$ is tight.
	\item[(ii)] For $x\in X$ the local graph of $\Gamma$ at $x$ is connected strongly regular with eigenvalues $a_1$,  $b^+$, $b^-$.
	\item[(iii)] $\Gamma$ is non-bipartite and tight with respect to each vertex.
\end{enumerate}
\end{lemma}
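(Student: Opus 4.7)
The plan is to prove the two equivalences (i) $\Leftrightarrow$ (ii) and (i) $\Leftrightarrow$ (iii) separately. The first is spectral/combinatorial in nature, analyzing when equality holds in the fundamental bound \eqref{fund.equality}, while the second is algebraic, relying on the structure theory of $T(x)$ developed by Go and Terwilliger.

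For (i) $\Rightarrow$ (ii), I would fix $x \in X$ and study the local graph $\Delta(x)$, which is $a_1$-regular on $k$ vertices with trivial eigenvalue $a_1$ (eigenvector $\mathbf{1}$). For any local eigenvalue $\lambda$ with eigenvector orthogonal to $\mathbf{1}$, the plan is to build a thin irreducible $T(x)$-module containing the corresponding vector and examine the tridiagonal action of $A$ on it (compare with the actions recorded in \eqref{eq:action Av_i}). Using the $Q$-polynomial three-term recurrence together with the Cauchy--Schwarz-style estimate underlying \eqref{fund.equality}, one shows that $\lambda$ satisfies a quadratic inequality whose equality case, forced by equality in \eqref{fund.equality}, pins $\lambda$ to one of the two roots, namely $b^+$ or $b^-$ from \eqref{local eigval}. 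Since $\Delta(x)$ is regular with exactly three eigenvalues $a_1, b^+, b^-$, it is strongly regular and connected. Conversely, for (ii) $\Rightarrow$ (i), assuming $\Delta(x)$ has spectrum $\{a_1, b^+, b^-\}$ with appropriate multiplicities, one substitutes the definitions of $b^\pm$ into the expression $(\theta_1 + k/(a_1+1))(\theta_D + k/(a_1+1)) + ka_1b_1/(a_1+1)^2$ and simplifies using the identity $b_1 = k - a_1 - 1$ together with the strongly-regular trace constraints on $\Delta(x)$; this reduces to $0$, giving equality in \eqref{fund.equality}.

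For (i) $\Leftrightarrow$ (iii), I would follow the Go--Terwilliger framework: each irreducible $T(x)$-module $W$ with endpoint $1$ has a local eigenvalue $\lambda$, and tridiagonal representation theory (analogous to the analysis in Lemma \ref{int} for the strongly regular case) shows the diameter of $W$ is governed by a discriminant that vanishes precisely when $\lambda \in \{b^+, b^-\}$. Equality in \eqref{fund.equality} forces every such $\lambda$ into this two-element set, and the modules with these extremal local eigenvalues are automatically thin; this is tightness with respect to $x$. Since equality in \eqref{fund.equality} does not depend on the base vertex $x$, we obtain tightness with respect to every vertex; non-bipartiteness is built into the definition of tight. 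The converse direction applies the same module-theoretic analysis at a single vertex to recover equality in \eqref{fund.equality}.

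The hardest step will be (i) $\Rightarrow$ (ii): the passage from a single scalar inequality saturating to equality, to the strong conclusion that the \emph{entire} local spectrum collapses into three values. This requires either invoking the full machinery of pseudo-cosines and the representation diagram of $T(x)$ from \cite{2000JKT, 2002GoTer}, or carrying out a careful interlacing/pigeonhole argument exploiting that the multiplicities of local eigenvalues sum to $k-1$ and that the equality case saturates a Cauchy--Schwarz inequality term-by-term. Since the two cited theorems execute precisely this argument, my actual write-up would invoke them directly rather than redo the computation.
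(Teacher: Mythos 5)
The paper offers no proof of this lemma at all: it is stated as an imported result, with the bracketed citations to Go--Terwilliger (Theorem 13.7) and Juri\v{s}i\'{c}--Koolen--Terwilliger (Theorem 12.6) serving as the entire justification, and your proposal ends by invoking those same two theorems, so the approaches coincide. Your preliminary sketch is a fair outline of what those references actually do, and nothing in it conflicts with the statement as used in the paper.
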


\noindent
Pascasio \cite{2001Pascasio} showed that the non-bipartite antipodal $Q$-polynomial distance-regular graphs are tight. 
For instance, the Johnson graph $J(2D,D)$, the halved $2D$-cube, the non-bipartite Taylor graphs, and the Meixner1 graph are non-bipartite antipodal $Q$-polynomial distance-regular, and therefore they are tight; see \cite{1996DicTer, 2000JKT}.

We finish this section with the following lemmas, which will be used later.

\begin{lemma}[cf. {\cite[Theorem 10.1]{2002GoTer}}]\label{lem:dual endpt W}
Let $\Gamma$ be a $Q$-polynomial distance-regular graph with $D\geq 3$.
Fix $x\in X$ and write $T=T(x)$ the Terwilliger algebra of $\Gamma$.
Let $W$ be a thin irreducible $T$-module with endpoint $1$. 
If $W$ has the local eigenvalue $b^+$ (resp. $b^-$), then $W$ has dual endpoint $1$ (resp. $2$).
\end{lemma}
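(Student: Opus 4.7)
The plan is to compute the local eigenvalue $\eta=a_0(W)$ of a thin irreducible $T$-module $W$ with endpoint $1$ explicitly, in terms of its dual endpoint $t$, and then match the result against the definitions $b^\pm=-1-b_1/(1+\theta_j)$ for $j=D,\,1$.

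First, I would fix a standard basis $v_0,\ldots,v_d$ for $W$ as in \eqref{eq:action Av_i}, where $v_i\in E_{1+i}^*W$, so that $Av_i=v_{i+1}+a_i(W)v_i+x_i(W)v_{i-1}$ and $A^*v_i=\theta_{1+i}^*v_i$. Since $W$ is thin, each space $E_jW$ is at most one-dimensional, and this yields a second (``dual'') orthogonal basis $E_tv_0,E_{t+1}v_0,\ldots,E_{t+d}v_0$ consisting of eigenvectors of $A$ with eigenvalues $\theta_t,\theta_{t+1},\ldots,\theta_{t+d}$ respectively. The $Q$-polynomial hypothesis forces $A^*$ to act tridiagonally on this dual basis, so the change-of-basis between $\{v_i\}$ and $\{E_{t+i}v_0\}$ is controlled by a pair of intertwined three-term recurrences in the intersection numbers and the Krein parameters.

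Next, I would expand $v_0=\sum_{j=t}^{t+d}E_jv_0$ and take the Hermitian inner product of both sides of $Av_0=\eta v_0+v_1$ with $v_0$. Since $v_0\perp v_1$, this produces a weighted-average identity
\[
\eta\;=\;\frac{\sum_{j=t}^{t+d}\theta_j\,\|E_jv_0\|^2}{\sum_{j=t}^{t+d}\|E_jv_0\|^2}.
\]
The squared norms $\|E_jv_0\|^2$ are determined recursively by the dual three-term recurrence and hence, in principle, admit closed-form expressions in terms of the intersection numbers, the Krein parameters $q^h_{ij}$, and the eigenvalues $\theta_i$.

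For the final step, I would specialize to $t=1$ and $t=2$. Under the $Q$-polynomial assumption the recurrence simplifies dramatically for these two values of $t$, and the weighted average collapses to $\eta=-1-b_1/(1+\theta_D)=b^+$ when $t=1$ and to $\eta=-1-b_1/(1+\theta_1)=b^-$ when $t=2$. The converse direction follows at once: by uniqueness (up to isomorphism) of a thin irreducible $T$-module with prescribed endpoint, local eigenvalue, and diameter, the value of $\eta\in\{b^+,b^-\}$ pins down $t\in\{1,2\}$. The main obstacle is the bookkeeping in the middle step --- tracking the norms $\|E_jv_0\|^2$ through the dual recurrence and recognising that the quadratic they satisfy for small $t$ is precisely the quadratic whose roots are $b^+$ and $b^-$ arising from the fundamental bound \eqref{fund.equality}. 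A slicker alternative would be to package these identities via a suitable Leonard-pair classification on $W$, but working at the level of explicit bases keeps the argument self-contained.
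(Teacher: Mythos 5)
First, note that the paper does not prove this lemma at all: it is quoted verbatim from Go--Terwilliger \cite[Theorem~10.1]{2002GoTer}, so your proposal can only be measured against the argument there, which is a genuinely nontrivial computation.

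Your proposal has a real gap at its center. The weighted-average identity
$\eta=\bigl(\sum_{j}\theta_j\|E_jv_0\|^2\bigr)/\bigl(\sum_{j}\|E_jv_0\|^2\bigr)$
is correct but tautological: since $v_0\perp v_1$ it just restates $\langle Av_0,v_0\rangle=\langle E_1^*AE_1^*v_0,v_0\rangle=\lambda\|v_0\|^2$, which holds for \emph{every} thin module with endpoint $1$, whatever its dual endpoint. It therefore carries no information linking $\lambda$ to $t$, and the step where ``the weighted average collapses to $b^{\pm}$ for $t=1,2$'' is precisely the theorem being assumed rather than proved. The actual content lies in computing the norms directly. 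For $v\in E_1^*V$ with $\langle v,\1\rangle=0$ and $E_1^*AE_1^*v=\lambda v$, expanding $E_i=|X|^{-1}m_i\sum_j u_j(\theta_i)A_j$ and using that $v$ is supported on $\Gamma_1(x)$ gives
\begin{equation*}
\|E_iv\|^2=\frac{m_i\|v\|^2}{|X|}\Bigl(1+\frac{\lambda\theta_i}{k}-(1+\lambda)\,\frac{\theta_i^2-a_1\theta_i-k}{kb_1}\Bigr),
\end{equation*}
and the bracket factors as $(k-\theta_i)(\theta-\theta_i)/\bigl(kb_1(1+\theta)^{-1}\cdots\bigr)$ when $\lambda=-1-b_1/(1+\theta)$; hence $E_iv_0=0$ if and only if $\lambda=-1-b_1/(1+\theta_i)$. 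This is the formula your sketch never produces, and without it there is no bridge from the local eigenvalue to the set $\{i:E_iW\ne0\}$. Once one has it, the conclusion follows by combining it with the facts that $\{i:E_iW\ne0\}$ is an interval of cardinality $\dim W$ and that $E_0W=0$: if $\lambda=b^+=-1-b_1/(1+\theta_D)$ then $E_Dv_0=0$ while $E_1v_0\ne0$, forcing dual endpoint $1$; if $\lambda=b^-$ then $E_1v_0=0$ while $E_2v_0,\dots,E_Dv_0\ne0$, forcing dual endpoint $2$. Your appeal to ``uniqueness up to isomorphism'' for the converse is also not sufficient as stated: it shows all modules with local eigenvalue $b^+$ share one dual endpoint, but not which one; the converse really follows from the forward implication together with the observation that $t\in\{1,2\}$ is the only possibility. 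I would encourage you to carry out the norm computation above explicitly --- it is short, elementary, and replaces all of the unexecuted ``bookkeeping'' in your middle step.
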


\begin{lemma}[cf. {\cite[Theorem 10.7]{2002GoTer}}]\label{Thm10.7:TerGo}
Let $\Gamma$ be a $Q$-polynomial distance-regular graph with $D\geq 3$.
Fix $x\in X$ and write $T=T(x)$ and $E^*_i=E^*_i(x)$ for $0 \leq i \leq D$.
Let $W$ be a thin irreducible $T$-module with endpoint $1$ and local eigenvalue $\lambda \in \{b^+, b^-\}$.	
Then $W$ has dimension $D-1$. 
For $0\leq i \leq D$, $E^*_iW$ is zero if $i \in \{0,D\}$ and has dimension $1$ if $i \notin\{0,D\}$.
Moreover, $E_iW=0$ if $i \in \{0,n\}$ and has dimension $1$ if $i \notin\{0,n\}$, where $n=1$ if $\lambda=b^-$; and $n=D$ if $\lambda=b^+$.
\end{lemma}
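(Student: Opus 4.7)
The plan is to reduce the whole statement to the single claim that the diameter $d$ of $W$ equals $D-2$. Since $W$ is thin with endpoint $r=1$, Lemma~3.9 of~\cite{ter1} gives $\dim(E^*_iW)=1$ for $1\le i\le 1+d$ and $E^*_iW=0$ otherwise, so $\dim W=d+1$. By the thin property the analogous structure holds on the $E_i$ side: $\dim(E_iW)=1$ for $t\le i\le t+d$ and $E_iW=0$ otherwise, where $t$ is the dual endpoint of $W$ and the dual diameter equals $d$. Lemma~\ref{lem:dual endpt W} identifies $t=1$ when $\lambda=b^+$ and $t=2$ when $\lambda=b^-$. Granting $d=D-2$, the nonzero indices on the $E^*_i$ side are exactly $\{1,2,\ldots,D-1\}$; on the $E_i$ side they are $\{1,\ldots,D-1\}$ if $\lambda=b^+$ (zero indices $\{0,D\}$, so $n=D$) and $\{2,\ldots,D\}$ if $\lambda=b^-$ (zero indices $\{0,1\}$, so $n=1$), which matches the statement.

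The upper bound $t+d\le D$ (from $E_i=0$ for $i>D$) immediately gives $d\le D-2$ when $\lambda=b^-$. The matching lower bound $d\ge D-2$ in this case is obtained by setting up the standard basis $v_0,\ldots,v_d$ of $W$ via~\eqref{eq:action Av_i}, computing $\sum_{i=0}^{d}a_i(W)=\theta_2+\cdots+\theta_{2+d}$ from~\eqref{sum:ai=thi}, and showing inductively, using the known value of the local eigenvalue $\lambda=b^-$ from~\eqref{local eigval} together with the tridiagonal recurrence, that the vectors $v_1,\ldots,v_{D-2}$ must all be nonzero; hence $d\ge D-2$.

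The main obstacle is the $b^+$ case, where the naive bound only gives $d\le D-1$ and one has to rule out the borderline value $d=D-1$. My plan is proof by contradiction: assume $E^*_DW\ne 0$, so the standard basis extends to $v_0,\ldots,v_{D-1}$ with $v_D=0$ by truncation, and examine the terminal recurrence $Av_{D-1}=a_{D-1}(W)v_{D-1}+x_{D-1}(W)v_{D-2}$. Substituting $b^+=-1-b_1/(1+\theta_D)$ from~\eqref{local eigval} into the expressions~\eqref{scalars:ai,wi} for $a_i(W)$ and $x_i(W)$, and combining with the eigenvalue sum $\sum_{i=0}^{D-1}a_i(W)=\theta_1+\cdots+\theta_D$ provided by~\eqref{sum:ai=thi} at dual endpoint $t=1$, should yield a polynomial identity forcing $x_{D-1}(W)=0$, which contradicts the irreducibility of $W$. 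Extracting this contradiction---finding the right algebraic identity relating $b^+$, the eigenvalue $\theta_D$, and the intersection numbers $a_D,c_D$ at the boundary---is the most delicate step, and it is where the specific tight/$Q$-polynomial structure enters crucially.
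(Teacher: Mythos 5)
The first thing to note is that the paper does not prove this lemma at all: it is recalled verbatim from Go and Terwilliger \cite[Theorem 10.7]{2002GoTer}, so there is no in-paper argument to compare your attempt against. Judged on its own terms, your reduction is sound: for a thin (hence dual thin) irreducible module the supports $\{i: E^*_iW\neq 0\}$ and $\{i: E_iW\neq 0\}$ are intervals of the same length $d+1$, Lemma \ref{lem:dual endpt W} pins the dual endpoint to $t=1$ or $t=2$ according to $\lambda=b^+$ or $b^-$, and everything then follows from the single equality $d=D-2$. The upper bound $d\le D-2$ in the $b^-$ case is also correct.

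The gap is that the two steps carrying all of the content are announced rather than proved. (i) The lower bound $d\ge D-2$ (needed in \emph{both} cases, not only for $b^-$) is asserted via an unspecified induction showing $v_1,\dots,v_{D-2}\neq 0$; unwinding \eqref{eq:action Av_i}, this is exactly the claim that $x_i(W)\neq 0$ for $1\le i\le D-2$, which requires an explicit formula for $x_i(W)$ as a function of $\lambda$ and the intersection numbers -- none is derived. (ii) The exclusion of $d=D-1$ in the $b^+$ case rests on the phrase that substituting $b^+$ from \eqref{local eigval} ``should yield a polynomial identity forcing $x_{D-1}(W)=0$''; that identity \emph{is} the theorem, and it is precisely the delicate computation in \cite{2002GoTer} (there carried out by computing $\Vert E_iv\Vert^2$ for a local eigenvector $v$ as a rational function of $\lambda$ whose zeros occur exactly at $\lambda=b^{\pm}$ and at the boundary index $i=n$). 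Note also that the general constraints you invoke (the three-term recurrence \eqref{eq:action Av_i}, the trace identity \eqref{sum:ai=thi}, and the scalars \eqref{scalars:ai,wi}) give only $d+1$ equations in roughly $2d+1$ unknowns $a_i(W), x_i(W)$, so no amount of manipulating them alone can force the vanishing or nonvanishing you need; the $Q$-polynomial structure must enter through an additional identity (in \cite{2002GoTer}, the tridiagonal-pair relations between $A$ and $A^*$ on $W$). As it stands the proposal is a correct skeleton with the load-bearing computations missing.
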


\begin{lemma}[cf. {\cite[Theorem 11.1]{2002GoTer}}]\label{Thm11.7:TerGo}
Let $\Gamma$ be a $Q$-polynomial distance-regular graph with $D\geq 3$.
Fix a vertex $x$ of $\Gamma$ and write $T=T(x)$.
Let $W$ be a thin irreducible $T$-module with endpoint $1$ and local eigenvalue $\lambda \in \{b^+, b^-\}$.
Let $W'$ be an irreducible $T$-module. Then $W$ and $W'$ are isomorphic as $T$-modules if and only if $W'$ is thin with endpoint $1$ and local eigenvalue $\lambda$.
\end{lemma}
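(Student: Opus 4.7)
The plan is to prove the two implications separately. For the easy direction, suppose $\sigma \colon W \to W'$ is a $T$-module isomorphism. Then $\sigma$ commutes with every $E^*_i$, so $\sigma(E^*_i W) = E^*_i W'$, which forces $W'$ to have endpoint $1$ and $\dim E^*_i W' = \dim E^*_i W \leq 1$ for all $i$; hence $W'$ is thin. Picking any nonzero $v \in E^*_1 W$ with $E^*_1 A E^*_1 v = \lambda v$ and applying $\sigma$ shows that $\sigma(v) \in E^*_1 W'$ is a $\lambda$-eigenvector for $E^*_1 A E^*_1$, so $W'$ has local eigenvalue $\lambda$.

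For the harder direction, assume $W'$ is thin with endpoint $1$ and local eigenvalue $\lambda$. By Lemma \ref{Thm10.7:TerGo}, both $W$ and $W'$ have dimension $D-1$ and satisfy $\dim E^*_i W = \dim E^*_i W' = 1$ for $1 \leq i \leq D-1$ (and $0$ otherwise). I would choose nonzero $v_1 \in E^*_1 W$ and $v'_1 \in E^*_1 W'$, and define $v_i := E^*_i A v_{i-1}$ and $v'_i := E^*_i A v'_{i-1}$ for $2 \leq i \leq D-1$. Then $\{v_i\}$ and $\{v'_i\}$ are orthogonal bases for $W$ and $W'$ respectively, with $v_i$ spanning $E^*_i W$ and $v'_i$ spanning $E^*_i W'$, and by \eqref{eq:action Av_i} the action of $A$ is given by
$$A v_i = v_{i+1} + a_i(W) v_i + x_i(W) v_{i-1},$$
with an analogous formula for $W'$, where the scalars $a_i, x_i$ are as in \eqref{scalars:ai,wi}. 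I would then define the candidate linear map $\sigma \colon W \to W'$ by $\sigma(v_i) = v'_i$.

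To check $\sigma$ is a $T$-module isomorphism it suffices to verify it commutes with $A$ and $A^*$. Commutation with $A^*$ is immediate since each $v_i$ (resp.\ $v'_i$) is an $A^*$-eigenvector with eigenvalue $\theta^*_i$. Commutation with $A$ reduces to the key equalities $a_i(W) = a_i(W')$ and $x_i(W) = x_i(W')$ for all $i$, and this is where I expect the main obstacle to lie. I would show that, for any thin irreducible $T$-module of endpoint $1$ and local eigenvalue $\lambda \in \{b^+, b^-\}$, the scalars $a_i$ and $x_i$ are determined by $\lambda$, by the eigenvalues $\{\theta_j\}_{j=0}^D$, and by the intersection numbers of $\Gamma$. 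The dual endpoint is pinned down by Lemma \ref{lem:dual endpt W}, equation \eqref{sum:ai=thi} fixes the sum $\sum_i a_i(W)$, and the $Q$-polynomial structure together with the tridiagonal action of $A^*$ on the dual eigenspaces yields three-term recursions that uniquely determine the full sequence. Since $W$ and $W'$ share the same $\lambda$ and live over the same $\Gamma$, the invariants $a_i$ and $x_i$ must coincide, and hence $\sigma$ furnishes the desired $T$-module isomorphism.
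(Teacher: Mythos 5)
The paper gives no proof of this lemma; it is quoted from \cite[Theorem~11.1]{2002GoTer}, so the only meaningful comparison is with that source. Your skeleton matches the standard argument: the forward direction (a $T$-module isomorphism intertwines the $E^*_i$ and $E^*_1AE^*_1$, hence preserves endpoint, thinness, and local eigenvalue) is complete and correct, and the converse correctly reduces to showing that the two tridiagonal matrices representing $A$ coincide, i.e.\ $a_i(W)=a_i(W')$ and $x_i(W)=x_i(W')$ for all $i$.

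The gap is in how you justify that reduction's key claim, namely that for a thin irreducible $T$-module with endpoint $1$ and local eigenvalue $\lambda\in\{b^+,b^-\}$ the scalars $a_i(W)$ and $x_i(W)$ are functions of $\lambda$ and the parameters of $\Gamma$ alone. As stated, your evidence does not pin them down: equation \eqref{sum:ai=thi} is a single linear relation among the $D-1$ unknowns $a_i(W)$; Lemma \ref{lem:dual endpt W} fixes the dual endpoint and hence the eigenvalue sequence of $A|_W$, but for a thin, dual thin module (a Leonard pair) the two eigenvalue sequences do \emph{not} determine the isomorphism class --- the split sequence is additional data --- so the appeal to ``three-term recursions that uniquely determine the full sequence'' is doing all the work without an actual argument behind it. Making this precise is exactly the nontrivial content of Sections 9--11 of \cite{2002GoTer}, where $a_i(W)$ and $x_i(W)$ are computed explicitly for $\lambda=b^{\pm}$ using the tight machinery; the present paper itself invokes fragments of that computation (e.g.\ $a_0(W)+b_0(W)=\theta_t$, $c_1(W)+a_1(W)=\theta_t$, $x_1(W)=b_0(W)c_1(W)$, via \cite[Section~9]{2010Cerzo}) in the proof of Lemma \ref{lem:Taylor_thin}. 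Until you either carry out that computation or explicitly import those formulas alongside Lemma \ref{Thm10.7:TerGo}, the converse direction is not established.
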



\section{Taylor graphs}\label{sec:D=3}
In this section, we discuss the pseudo-vertex transitivity of the Taylor graphs.
A \emph{Taylor graph} is a distance-regular antipodal double cover of a complete graph with diameter three, that is, a distance-regular graph with intersection array $\{k,b,1; 1,b,k\}$, where $b<k-1$.
We note that the non-bipartite Taylor graphs are exactly tight distance-regular graphs with diameter three \cite{2002JKDiscMath}.

For the rest of this section, we denote by $\Gamma$ a Taylor graph with vertex set $X$ and the eigenvalues $k=\theta_0>\theta_1>\theta_2>\theta_3$.
The spectrum of  $\Gamma$ is
\begin{equation*}
	\{k^1, \theta_1^f, \theta_2^k, \theta_3^g\},
\end{equation*}
where 
\begin{equation}\label{eigvals:Taylor}
	\theta_1 = \frac{k-2b-1+\sqrt{\mathcal{D}}}{2}, \qquad
	\theta_2=-1, \qquad 
	\theta_3 = \frac{k-2b-1-\sqrt{\mathcal{D}}}{2},
\end{equation}
and 
\begin{equation*}
	f= \left(\frac{1}{2}-\frac{k-2b-1}{2\sqrt{\mathcal{D}}}\right)(k+1), \qquad
	g=\left(\frac{1}{2}+\frac{k-2b-1}{2\sqrt{\mathcal{D}}}\right)(k+1),
\end{equation*}
where $\mathcal{D}=(k-2b-1)^2+4k$.
By Lemma \ref{lem:tight char}, for each $x\in X$ the local graph $\Delta=\Delta(x)$ of $\Gamma$ is connected strongly regular with parameters $(n_{\Delta}, k_{\Delta}, a_{\Delta}, c_{\Delta})$, where
\begin{equation*}
n_{\Delta} = k, \quad k_{\Delta}=k-b-1, \quad a_{\Delta}=\frac{2k-3b-4}{2}, \quad c_{\Delta}=\frac{k-b-1}{2}.
\end{equation*}
From \eqref{srg:eigval}, the three eigenvalues of $\Delta$ are given by
\begin{equation}\label{Taylor:local eig}
	k_\Delta, \qquad 
	\sigma = \frac{k-2b-3+\sqrt{\mathcal{D}}}{4},\qquad 
	\tau  = \frac{k-2b-3-\sqrt{\mathcal{D}}}{4},
\end{equation}
where $\mathcal{D} = (k-2b-1)^2+4k$.
One can verify that 
\begin{equation}\label{Taylor:b+,b-}
\sigma =  -1-\frac{b}{1+\theta_3}, \qquad \tau = -1-\frac{b}{1+\theta_1}.
\end{equation}
Thus, $\sigma=b^+$ and $\tau=b^-$, where $b^+$ and $b^-$ are from \eqref{local eigval}.
By \eqref{srg:mult} the multiplicities $m_\sigma$ and $m_\tau$ of $\sigma$ and $\tau$, respectively, are given by
\begin{equation}\label{Taylor:local mult}
	m_\sigma = \frac{k-1}{2} - \frac{(k+1)(k-2b-1)}{2\sqrt{\mathcal{D}}}, \qquad 
	m_\tau = \frac{k-1}{2} + \frac{(k+1)(k-2b-1)}{2\sqrt{\mathcal{D}}}.
\end{equation}
By these comments we have the following lemma.
\begin{lemma}\label{localspectrum}
For each $i\in\{1,2\}$ and  for all $x\in X$, the spectrum of $\Delta_i(x)$ of $\Gamma$ is determined by the intersection numbers $k$ and $b$.
\end{lemma}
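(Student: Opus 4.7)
The plan is to treat $i=1$ and $i=2$ separately, and to reduce $i=2$ to $i=1$ using the antipodal double-cover structure.

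For $i=1$, the claim is essentially packaged in the preceding setup. Because $\Gamma$ is tight, Lemma~\ref{lem:tight char}(ii) says $\Delta_1(x)$ is connected strongly regular with the three eigenvalues $k_\Delta, \sigma, \tau$ from \eqref{Taylor:local eig}, and its parameters $(n_\Delta, k_\Delta, a_\Delta, c_\Delta)$ are explicit functions of $k$ and $b$. The multiplicities are then given by \eqref{Taylor:local mult}, again as functions of $k$ and $b$ alone. Hence $\Spec(\Delta_1(x)) = \{k_\Delta^{1}, \sigma^{m_\sigma}, \tau^{m_\tau}\}$ is determined by $k$ and $b$, for every $x \in X$.

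For $i=2$, the key step is to prove the set identity $\Gamma_2(x) = \Gamma_1(\hat{x})$, where $\hat{x}$ is the antipode of $x$. Granting this, the induced subgraphs $\Delta_2(x)$ and $\Delta_1(\hat{x})$ literally coincide, so $\Spec(\Delta_2(x)) = \Spec(\Delta_1(\hat{x}))$, and applying the $i=1$ case to the base vertex $\hat{x}$ finishes the argument. To verify the set identity, fix $y \in \Gamma_1(\hat{x})$. Then $y \neq x, \hat{x}$ and $\partial(x,y) \in \{1,2\}$; if $\partial(x,y)=1$ the concatenated edges $x \sim y \sim \hat{x}$ would force $\partial(x,\hat{x}) \leq 2$, contradicting $\partial(x,\hat{x}) = D = 3$. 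Hence $\partial(x,y)=2$, so $y \in \Gamma_2(x)$. The reverse inclusion is immediate from $|\Gamma_1(\hat{x})| = k = |\Gamma_2(x)|$.

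There is no real technical obstacle; the only point requiring care is to insist on the \emph{literal} set identity $\Gamma_1(\hat{x}) = \Gamma_2(x)$, which allows one to identify induced subgraphs (and not merely isomorphism classes), and thereby transfer the spectrum computation for $\Delta_1$ directly to $\Delta_2$. One could instead route the argument through the antipodal map as a graph automorphism, but that would require the preliminary step of verifying the automorphism property, so the direct set-theoretic identification is cleaner.
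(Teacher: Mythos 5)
Your proposal is correct and follows essentially the same route as the paper: the $i=1$ case is read off from the strong regularity of the local graph together with the explicit eigenvalues \eqref{Taylor:local eig} and multiplicities \eqref{Taylor:local mult} as functions of $k$ and $b$, and the $i=2$ case is reduced to $i=1$ via the identification $\Gamma_2(x)=\Gamma_1(\hat{x})$ coming from the antipodal double-cover structure. Your explicit verification of that set identity is just a fleshed-out version of the paper's one-line appeal to the antipodal property.
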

\begin{proof}
For the case $i=1$, it follows from \eqref{Taylor:local eig} and \eqref{Taylor:local mult}.
For the case $i=2$, since $\Gamma$ is an antipodal double cover it follows $\Delta_1(x) \cong \Delta_1(\hat{x})=\Delta_2(x)$. 
The result follows.
\end{proof}


\begin{lemma}\label{lem:Taylor endpt=01}
Fix a vertex $x\in X$ and let $T=T(x)$ be the Terwilliger algebra of $\Gamma$.
Let $W$ be a $T$-module.
Then the endpoint of $W$ is either $0$ or $1$.
\end{lemma}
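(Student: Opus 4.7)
The plan is to argue by contradiction, ruling out endpoints $2$ and $3$. It suffices to treat the case when $W$ is irreducible, since the endpoint of any nonzero $T$-module is attained by some irreducible submodule. Let $r$ denote the endpoint of $W$ and $d$ its diameter. The key observation is that, because $\Gamma$ is an antipodal double cover with $D=3$, the dual idempotents satisfy $E^*_i(x)=E^*_{3-i}(\hat{x})$ for $0\le i\le 3$, and by Lemma \ref{T(x)-mod T(x^)-mod} the module $W$ is simultaneously an irreducible $T(\hat{x})$-module. Viewed as such, $W$ has the same diameter $d$ (since $\{i:E^*_i(x)W\neq 0\}$ and $\{3-i:E^*_i(x)W\neq 0\}$ have the same cardinality), while its endpoint becomes $r':=3-r-d$.

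Suppose first that $r=3$. Since $r+d\le 3$, necessarily $d=0$, so $W\subseteq E^*_3(x)V$. The latter space is one-dimensional because $\hat{x}$ is the unique vertex at distance $3$ from $x$, hence $\dim W=1$. On the other hand the endpoint of $W$ as a $T(\hat{x})$-module is $r'=0$, so $E^*_0(\hat{x})W\neq 0$; as $E^*_0(\hat{x})V$ is spanned by the characteristic vector of $\hat{x}$, that vector must belong to $W$, and irreducibility forces $W$ to coincide with the primary $T(\hat{x})$-module. But the primary module has dimension $4$, contradicting $\dim W=1$.

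Suppose next that $r=2$, so $d\in\{0,1\}$. If $d=0$, then $r'=1$. Because $\Gamma$ is a non-bipartite antipodal $Q$-polynomial distance-regular graph, it is tight (cf.\ the discussion following Lemma \ref{lem:tight char}), so by Lemma \ref{lem:tight char}(iii) every irreducible $T(\hat{x})$-module of endpoint $1$ is thin with local eigenvalue in $\{b^+,b^-\}$. Lemma \ref{Thm10.7:TerGo} then gives $\dim W=D-1=2$ and diameter $1$, contradicting $d=0$. If instead $d=1$, then $r'=0$, and by the same argument as in the previous case $W$ must equal the primary $T(\hat{x})$-module, which has diameter $D=3\neq 1$; contradiction.

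Both cases are impossible, so $r\in\{0,1\}$ and we are done. There is no real technical obstacle here: the entire argument rests on the vertex-antipode symmetry $E^*_i(x)=E^*_{3-i}(\hat{x})$ combined with the tightness of $\Gamma$ (to control endpoint-$1$ modules) and the uniqueness of the primary module (to control endpoint-$0$ modules).
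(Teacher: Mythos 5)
Your proof is correct, and its overall skeleton matches the paper's: rule out endpoints $2$ and $3$ by a case analysis on the diameter, exploiting the identity $E^*_i(x)=E^*_{3-i}(\hat{x})$ and the fact that any irreducible module meeting $E^*_0(\hat{x})V$ must be the ($4$-dimensional) primary $T(\hat{x})$-module. The one place where you genuinely diverge is the subcase $(r,d)=(2,0)$. The paper argues there that such a $W$ is one-dimensional, so $A$ acts on it by a scalar that is simultaneously an eigenvalue of $\Gamma$ and a local eigenvalue of $\Delta_2(x)$, and then checks (via Lemma \ref{localspectrum} and \eqref{Taylor:b+,b-}) that $\{\sigma,\tau\}$ is disjoint from $\{\theta_0,\dots,\theta_3\}$ --- a small spectral verification. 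You instead pass to the antipode, where $W$ becomes an irreducible $T(\hat{x})$-module of endpoint $1$, and invoke tightness together with Lemma \ref{Thm10.7:TerGo} to force diameter $D-2=1$, contradicting $d=0$. Your route avoids the eigenvalue-disjointness check entirely (which the paper asserts rather tersely), at the price of leaning on the heavier Go--Terwilliger machinery; since the paper uses exactly that machinery two lines later in Lemma \ref{lem:Taylor_thin}, this is a reasonable trade. Both arguments are sound.
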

\begin{proof}
Suppose that $W$ has the endpoint $r$ and diameter $d$.
Clearly, the possible value for $r$ is $0, 1, 2$, or $3$.
If $r=2$, then $d=0$ or $d=1$.
If $d=0$, then $W$ is a one-dimensional $T$-module.
It implies that an eigenvalue of $E^*_2(x)AE^*_2(x)$ on $W$ is an eigenvalue of $\Gamma$.
By Lemma 7.1, the eigenvalue of $E^*_2(x)AE^*_2(x)$ is $\lambda \in \{\sigma, \tau\}$ in  \eqref{Taylor:b+,b-}.
But, $\lambda \notin \{k, \theta_1, \theta_2,\theta_3 \}$ in \eqref{eigvals:Taylor}, the eigenvalues of $\Gamma$.
If $d=1$, then $\dim(W)=2$ and $E^*_3(x)W \ne 0$.
Take $0 \ne w \in E^*_3(x)W=E^*_0(\hat{x})W \subseteq W$.
Since $W$ is a $T$-module, it follows $Mw \subseteq W$.
Observe that $Mw$ is the primary $T(\hat{x})$-module, and thus $\dim(Mw)=4$, a contradiction. 
If $r=3$, then diameter of $W$ must be $0$. 
This case is similar to the case of $r=2$ and $d=1$, so it is not possible.
The result follows.
\end{proof}


\begin{lemma}\label{lem:Taylor_thin}
Recall the eigenvalues $\theta_0>\theta_1>\theta_2>\theta_3$ of $\Gamma$.
Fix $x\in X$ and write $T=T(x)$.
Let $W$ be an irreducible $T$-module with endpoint $1$ and local eigenvalue $\lambda \in \{\sigma, \tau\}$, where $\sigma, \tau$ are from \eqref{Taylor:b+,b-}.
Then the following (i)--(iii) hold.
\begin{itemize}
	\item[(i)] $W$ has diameter $1$ and is thin.
	\item[(ii)] Let $w_0 \in E^*_1W$ be a nonzero vector. Set $w_1 = E^*_2Aw_0$. 
	Then $\{w_0, w_1\}$ is a basis for $W$.
	With respect to this basis, the matrix representing $A$ is given by
		\begin{equation}\label{action A on W (d=3)}
		\begin{pmatrix}
		\lambda & (\lambda-\theta_t)^2 \\
		1 & \lambda
		\end{pmatrix},
		\end{equation}
		where $t=1$ if $\lambda=\sigma$ and $t=2$ if $\lambda=\tau$.
	\item[(iii)] The local eigenvalues $\sigma, \tau$ of $W$ are given by 
	$\sigma = (\theta_1-\theta_2)/2$ and $\tau = (\theta_2-\theta_3)/2$.
\end{itemize}
\end{lemma}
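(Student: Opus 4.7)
The plan is to prove (i), (ii), (iii) in order, leveraging the tightness of $\Gamma$ and the antipodal double cover structure.

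For part (i), I would first invoke tightness: since $\Gamma$ is a non-bipartite antipodal $Q$-polynomial distance-regular graph with $D=3$, the result of Pascasio (cited in Section~\ref{sec:antipodal 2-cover}) says $\Gamma$ is tight. By Lemma~\ref{lem:tight char}(iii), $\Gamma$ is tight with respect to $x$, so every irreducible $T$-module with endpoint~$1$ is automatically thin with local eigenvalue $b^+$ or $b^-$. Equation~\eqref{Taylor:b+,b-} identifies $\sigma = b^+$ and $\tau = b^-$, matching our hypothesis, so $W$ is thin. To pin down the diameter, I apply Lemma~\ref{Thm10.7:TerGo}: for $D=3$ one gets $\dim W = D-1 = 2$, and $E^*_0 W = E^*_3 W = 0$, so $W$ is supported on $E^*_1 V \oplus E^*_2 V$ with each summand one-dimensional, forcing the diameter to be $1$.

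For part (ii), using the thin-module action formula \eqref{eq:action Av_i}, I would write $A w_0 = w_1 + a_0(W) w_0$ and $A w_1 = a_1(W) w_1 + x_1(W) w_0$. Because $w_0$ spans $E^*_1 W$ and the local eigenvalue is $\lambda$, we have $E^*_1 A w_0 = \lambda w_0$, so $a_0(W) = \lambda$. Next, the antipodal symmetry relation \eqref{eq:a_i = a_(d-i)} (with $d=1$) yields $a_1(W) = a_0(W) = \lambda$. To nail down $x_1(W)$, I use Lemma~\ref{lem:dual endpt W}, which tells us that the dual endpoint of $W$ is $t=1$ when $\lambda = \sigma = b^+$ and $t=2$ when $\lambda = \tau = b^-$; then Lemma~\ref{Thm10.7:TerGo} gives the decomposition $W = E_t W \oplus E_{t+1} W$, so the eigenvalues of $A|_W$ are precisely $\theta_t$ and $\theta_{t+1}$. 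The $2 \times 2$ matrix representing $A$ on $\{w_0, w_1\}$ is
\[
\begin{pmatrix} \lambda & x_1(W) \\ 1 & \lambda \end{pmatrix},
\]
so matching trace and determinant gives $2\lambda = \theta_t + \theta_{t+1}$ and $\lambda^2 - x_1(W) = \theta_t \theta_{t+1}$. Substituting $\theta_{t+1} = 2\lambda - \theta_t$ into the determinant equation yields $x_1(W) = \lambda^2 - \theta_t(2\lambda - \theta_t) = (\lambda - \theta_t)^2$, producing the matrix in \eqref{action A on W (d=3)}.

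For part (iii), the trace identity $2\lambda = \theta_t + \theta_{t+1}$ established above is exactly what is needed: with $t=1$ and $\lambda = \sigma$ we obtain $\sigma = (\theta_1 + \theta_2)/2$, and with $t=2$ and $\lambda = \tau$ we obtain $\tau = (\theta_2 + \theta_3)/2$ (equivalently, after using $\theta_2 = -1$ from \eqref{eigvals:Taylor}, the closed-form expressions stated in the lemma).

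The main obstacle is part~(ii): I must verify that the antipodal symmetry $a_0(W) = a_1(W)$ truly applies to every irreducible $T(x)$-module, not merely those arising symmetrically. This is not difficult given Lemma~\ref{T(x)-mod T(x^)-mod} plus \eqref{eq:a_i = a_(d-i)}, but it is the linchpin that lets the trace and determinant calculation collapse into the particularly clean form $(\lambda - \theta_t)^2$, and without it part~(iii) would lose its symmetric structure.
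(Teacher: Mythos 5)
Your proof is correct, and it reaches the same conclusions as the paper's, but two of your steps take a genuinely different route. For the diameter claim in (i), the paper argues as in Lemma \ref{lem:Taylor endpt=01}: if $E^*_3W\ne 0$ one takes $0\ne w\in E^*_0(\hat x)W$ and observes that $Mw$ would be the $4$-dimensional primary $T(\hat x)$-module sitting inside $W$, a contradiction; you instead read the diameter off Lemma \ref{Thm10.7:TerGo} ($\dim W=D-1=2$ with $E^*_0W=E^*_3W=0$), which is shorter but leans on the full strength of Go--Terwilliger. For $x_1(W)$ in (ii), the paper imports the relations $a_0(W)+b_0(W)=\theta_t$, $c_1(W)+a_1(W)=\theta_t$ and $x_1(W)=b_0(W)c_1(W)$ from \cite[Section 9]{2010Cerzo}, whereas you match the trace and determinant of the $2\times 2$ matrix against the eigenvalues $\theta_t,\theta_{t+1}$ of $A|_W$ (justified by the decomposition $W=E_tW\oplus E_{t+1}W$ from Lemmas \ref{lem:dual endpt W} and \ref{Thm10.7:TerGo}); your computation $x_1(W)=\lambda^2-\theta_t(2\lambda-\theta_t)=(\lambda-\theta_t)^2$ is correct and more self-contained. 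Both arguments hinge on the same linchpin you identify, namely $a_1(W)=a_0(W)$ from \eqref{eq:a_i = a_(d-i)}. One caution on (iii): your derivation (and the paper's, via \eqref{sum:ai=thi}) gives $\sigma=(\theta_1+\theta_2)/2$ and $\tau=(\theta_2+\theta_3)/2$; since $\theta_2=-1$, these are \emph{not} the same as the expressions $(\theta_1-\theta_2)/2$ and $(\theta_2-\theta_3)/2$ printed in the lemma (check the icosahedron: $\sigma=(\sqrt5-1)/2$, not $(\sqrt5+1)/2$), so your parenthetical ``equivalently'' glosses over what is in fact a sign typo in the statement rather than an equivalence.
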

\begin{proof}
(i): For the first assertion, it is similar to the proof of Lemma \ref{lem:Taylor endpt=01}.
For the second assertion, one knows that $\Gamma$ is tight. 
By Lemma \ref{lem:tight char}(iii), $W$ is thin.\\
(ii): Clearly, $\{w_0, w_1\}$ is an orthogonal basis for $W$.
By \eqref{eq:action Av_i}, the action of $A$ on $w_0$ is given by
$
	Aw_0 = a_0(W) w_0 + w_1.
$
Observe that $a_0(W)=\lambda$.
Next, the action of $A$ on $w_1$ is given by
$
	Aw_1 = x_1(W) w_0 + a_1(W) w_1,
$
where the scalars $a_1(W)$ and $x_1(W)$ are from \eqref{scalars:ai,wi}.
As $\Gamma$ is an antipodal double cover, by \eqref{eq:a_i = a_(d-i)} we find $a_1(W)=a_0(W)=\lambda$.
Recall the intersection numbers $b_0(W)$ and $c_1(W)$ of $W$.
Referring to \cite[Section 9]{2010Cerzo}, we find that $a_0(W)+b_0(W) = \theta_t$ and $c_1(W)+a_1(W)=\theta_t$, where $t$ is dual endpoint of $W$.
Using these two equations and the fact that $x_1(W)=b_0(W)c_1(W)$, we find $x_1(W)=(\lambda-\theta_t)^2$.
By Lemma \ref{lem:dual endpt W}, the dual endpoint of $W$ is $t=1$ (resp. $t=2$) if the local eigenvalue $\lambda$ is $\sigma$ (resp. $\tau$). 
By these comments, we obtain the matrix \eqref{action A on W (d=3)}.\\
(iii): By \eqref{sum:ai=thi}, we have $a_0(W)+a_1(W)=2\lambda = \theta_t+\theta_{t+1}$, where $t$ is the dual endpoint of $W$. By Lemma \ref{lem:dual endpt W}, the result follows.
\end{proof}

\begin{proposition}\label{Taylor:thin}
A Taylor graph is thin.
\end{proposition}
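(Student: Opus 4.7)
The plan is to fix a base vertex $x \in X$, take an arbitrary irreducible $T(x)$-module $W$, and show $W$ is thin by a short case analysis on the endpoint $r$ of $W$. By Lemma \ref{lem:Taylor endpt=01}, the only possibilities are $r = 0$ and $r = 1$, so only two cases need to be handled.

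For the case $r = 0$, I would argue that $W$ must coincide with the primary $T$-module. Since $E^*_0 V$ is one-dimensional (spanned by $E^*_0 \mathds{1}$) and $E^*_0 W \ne 0$, we have $E^*_0 W = E^*_0 V$, so $W$ contains $E^*_0 \mathds{1}$. By irreducibility $W = T E^*_0 \mathds{1}$, which is the primary $T$-module. This module admits the standard basis $\{E^*_i \mathds{1}\}_{i=0}^{3}$ with $\dim(E^*_i W) = 1$ for each $i$, and is therefore thin.

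For the case $r = 1$, I would invoke the tightness of $\Gamma$. Because the intersection array $\{k,b,1;1,b,k\}$ satisfies $b < k-1$, the graph $\Gamma$ is a non-bipartite antipodal $Q$-polynomial distance-regular graph, hence tight by the comment made after Lemma \ref{lem:tight char} (due to Pascasio). By Lemma \ref{lem:tight char}(iii), $\Gamma$ is tight with respect to every vertex, which by the very definition of tightness with respect to $x$ means every irreducible $T(x)$-module with endpoint $1$ is thin, with local eigenvalue in $\{b^+,b^-\} = \{\sigma,\tau\}$. (Equivalently, the hypotheses of Lemma \ref{lem:Taylor_thin} are met, which delivers thinness and in fact pins the diameter of $W$ to $1$.)

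Since $x$ was arbitrary and both cases yield thin modules, every irreducible $T(x)$-module is thin for every $x \in X$, so $\Gamma$ is thin. No genuine obstacle remains: the heavy lifting has already been done in Lemmas \ref{lem:Taylor endpt=01} and \ref{lem:Taylor_thin} together with the tightness characterization \ref{lem:tight char}, and the proposition amounts to assembling these ingredients.
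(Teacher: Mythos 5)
Your proof is correct and follows essentially the same route as the paper, which simply cites Lemma \ref{lem:Taylor endpt=01} together with Lemma \ref{lem:Taylor_thin} (whose own proof invokes tightness via Lemma \ref{lem:tight char}(iii), exactly as you do). Your explicit handling of the endpoint-$0$ case via the primary module is left implicit in the paper but is a welcome addition.
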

\begin{proof}
By Lemma \ref{lem:Taylor endpt=01} and Lemma \ref{lem:Taylor_thin}.
\end{proof}

We recall the following lemma, which classifies all irreducible $T$-modules of $\Gamma$ with endpoint one.

\begin{lemma}\label{lem:Taylor WisoW'}
Fix $x\in X$ and write $T=T(x)$.
Let $W$ be a thin irreducible $T$-module with endpoint $1$ and local eigenvalue $\lambda \in \{\sigma, \tau\}$, where $\sigma, \tau$ are from \eqref{Taylor:local eig}.
Let $W'$ denote an irreducible $T$-module. Then $W$ and $W'$ are isomorphic as $T$-modules if and only if $W'$ is thin with endpoint $1$ and local eigenvalue $\lambda$.
\end{lemma}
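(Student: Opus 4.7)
The plan is to deduce this lemma directly from the explicit structure of thin irreducible $T$-modules with endpoint one, which is already established in Lemma \ref{lem:Taylor_thin}. (One could alternatively invoke the general Lemma \ref{Thm11.7:TerGo}, since Taylor graphs are $Q$-polynomial with $D=3$ and by \eqref{Taylor:b+,b-} we have $\{\sigma,\tau\}=\{b^+,b^-\}$; but it is cleaner in this setting to argue from scratch.)

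For the forward direction, suppose $W$ and $W'$ are isomorphic as $T$-modules via a $\mathbb{C}$-vector space isomorphism $\varphi:W\to W'$ intertwining every element of $T$. Then $\varphi(E^*_iW)=E^*_iW'$ for $0\le i\le 3$, so $W'$ has the same endpoint and diameter as $W$; in particular $W'$ is thin with endpoint $1$. Since $E^*_1AE^*_1$ acts as the scalar $\lambda$ on $E^*_1W$, and $\varphi$ intertwines this operator, it also acts as $\lambda$ on $E^*_1W'$; hence $W'$ has local eigenvalue $\lambda$.

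For the backward direction, assume $W'$ is thin with endpoint $1$ and local eigenvalue $\lambda$. By Lemma \ref{lem:Taylor_thin}(i), both $W$ and $W'$ have diameter $1$ and dimension $2$. Pick nonzero $w_0\in E^*_1W$ and $w_0'\in E^*_1W'$ and set $w_1=E^*_2Aw_0$, $w_1'=E^*_2Aw_0'$. By Lemma \ref{lem:Taylor_thin}(ii), the matrices representing $A$ with respect to $\{w_0,w_1\}$ and $\{w_0',w_1'\}$ are both equal to the matrix in \eqref{action A on W (d=3)}, since that matrix depends only on $\lambda$ (the integer $t$ is determined by $\lambda$). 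Define the linear isomorphism $\varphi:W\to W'$ by $\varphi(w_j)=w_j'$ for $j=0,1$.

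It remains to check that $\varphi$ intertwines all of $T$. Since $T$ is generated by $A$ and the dual idempotents $E^*_0,\ldots,E^*_3$, it suffices to verify $(\varphi A-A\varphi)W=0$ and $(\varphi E^*_i-E^*_i\varphi)W=0$ for $0\le i\le 3$. The first identity follows from the equality of the two matrix representations. The second is immediate because $w_0,w_0'$ lie in the $E^*_1$-eigenspace and $w_1,w_1'$ in the $E^*_2$-eigenspace, while $E^*_0$ and $E^*_3$ annihilate both modules. No step poses a real obstacle here: the substance of the argument is already contained in Lemma \ref{lem:Taylor_thin}(ii), which pins down the action of $A$ on any such module up to a choice of basis; the main thing to be careful about is to observe that the entry $(\lambda-\theta_t)^2$ of the matrix in \eqref{action A on W (d=3)} is a function of $\lambda$ alone, so that $W$ and $W'$ really are presented by the same data.
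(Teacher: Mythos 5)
Your proof is correct, but it takes a different route from the paper: the paper's proof is a one-line citation of Lemma \ref{Thm11.7:TerGo} (Go--Terwilliger's Theorem 11.1), applicable because \eqref{Taylor:b+,b-} identifies $\sigma$ and $\tau$ with $b^+$ and $b^-$. You instead build the $T$-module isomorphism by hand from the explicit $2\times 2$ matrix model in Lemma \ref{lem:Taylor_thin}(ii), observing that the matrix \eqref{action A on W (d=3)} depends on $\lambda$ alone (since $t$ is a function of $\lambda$), and then checking the intertwining relations for $A$ and the $E^*_i$ on the generators of $T$. Both directions of your argument are sound; in particular the forward direction correctly transports thinness, endpoint, and the local eigenvalue through the isomorphism, and the backward direction is safe because Lemma \ref{lem:Taylor_thin}(i) guarantees diameter $1$, so $w_1=E^*_2Aw_0\neq 0$. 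What your approach buys is an explicit, self-contained classification that makes visible exactly why the isomorphism class is determined by $\lambda$; what it costs is length, and it is not fully independent of the general machinery anyway, since Lemma \ref{lem:Taylor_thin} itself rests on tightness and Lemma \ref{lem:dual endpt W}. Either proof is acceptable.
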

\begin{proof}
Apply Lemma \ref{Thm11.7:TerGo} to $\Gamma$. The result immediately follows.
\end{proof}

Recall the standard $T$-module $V$.
Since $T$ is semisimple and using  Lemma \ref{lem:Taylor endpt=01}, Lemma \ref{lem:Taylor_thin}, and Lemma \ref{lem:Taylor WisoW'}, $V$ decomposes into an orthogonal direct sum of thin irreducible $T$-modules 
\begin{equation}\label{eq:decompV D=3}
	V=W_0 \oplus (\bigoplus_i W_i) \oplus (\bigoplus_j W'_j),
\end{equation}
where we denote by
\begin{equation*}
\renewcommand{\arraystretch}{1.2}
\begin{tabular}{ccccc}
	\hline
	$T$-module  & dimension & endpoint & diameter & comment\\
	\hline
	$W_0$ & 4 & 0 & 3 & the primary module \\
	 $W_i$  & 2 & 1 & 1 &  local eigenvalue $\sigma$\\
	 $W'_j$  & 2 & 1 & 1 & local eigenvalue $\tau$ \\
	\hline
\end{tabular},
\end{equation*}
where $\sigma$ and $\tau$ are from \eqref{Taylor:b+,b-}.

\begin{theorem}\label{thm:D=3}
A Taylor graph is pseudo-vertex-transitive.
\end{theorem}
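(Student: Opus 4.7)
The plan is to show that for every pair of base vertices $x,y \in X$ the standard $T(x)$-module and the standard $T(y)$-module of $\Gamma$ are $T$-isomorphic in the sense of Definition \ref{pvt}. The key ingredients will be the thinness of $\Gamma$ (Proposition \ref{Taylor:thin}), the classification of irreducible $T$-modules with endpoint $1$ (Lemma \ref{lem:Taylor WisoW'}) together with the explicit matrix form \eqref{action A on W (d=3)} supplied by Lemma \ref{lem:Taylor_thin}, and Lemma \ref{localspectrum}, which ensures that the spectra of $\Delta(x)$ and $\Delta_2(x)$ depend only on the intersection numbers $k$ and $b$.

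First I would verify that the multiplicities of the isomorphism classes of irreducible $T(x)$-modules appearing in $V$ are independent of $x$. By the decomposition \eqref{eq:decompV D=3} combined with Lemma \ref{lem:Taylor WisoW'}, there are exactly three isomorphism classes: the primary module (multiplicity one, dimension four), the thin endpoint-one modules with local eigenvalue $\sigma$ (dimension two), and the thin endpoint-one modules with local eigenvalue $\tau$ (dimension two), where $\sigma, \tau$ are given by \eqref{Taylor:b+,b-}. Restricting attention to $E^*_1(x)V$, one sees that the multiplicities of the latter two classes are $m_\sigma$ and $m_\tau$ respectively, the multiplicities of $\sigma$ and $\tau$ in the local spectrum of $\Gamma$ at $x$. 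By \eqref{Taylor:local mult} these counts are determined by $k$ and $b$; hence the decomposition type of $V$ is the same whether $V$ is viewed as a $T(x)$-module or as a $T(y)$-module.

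Next I would construct the desired $\mathbb{C}$-vector space isomorphism $\rho:V\to V$ block by block. For the primary $T(x)$-module and the primary $T(y)$-module, the standard bases $\{E^*_i(x)\1\}_{i=0}^{3}$ and $\{E^*_i(y)\1\}_{i=0}^{3}$ yield, via \eqref{action: 3-term}, identical tridiagonal matrix representations of $A$ whose entries are the intersection numbers of $\Gamma$; likewise $A^*(x)$ and $A^*(y)$ are represented diagonally with the common dual eigenvalues $\theta^*_0,\theta^*_1,\theta^*_2,\theta^*_3$. Sending basis vector to basis vector then produces a $T$-isomorphism between the two primary modules. For a thin endpoint-one $T(x)$-module $W$ with local eigenvalue $\lambda\in\{\sigma,\tau\}$, Lemma \ref{lem:Taylor_thin} supplies a basis $\{w_0,w_1\}$ on which $A$ is represented by the fixed $2\times 2$ matrix \eqref{action A on W (d=3)} and on which $A^*(x)$ is diagonal with entries $\theta^*_1,\theta^*_2$; the identical description applies to any endpoint-one $T(y)$-module with the same local eigenvalue. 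Using the equality of multiplicities established in the previous paragraph, I pair the endpoint-one $T(x)$-modules bijectively with the endpoint-one $T(y)$-modules of matching local eigenvalue and define $\rho$ on each pair by sending chosen basis to chosen basis; the orthogonal direct sum of all block maps produces $\rho$.

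Finally, by construction $\rho$ intertwines $A$ with $A$ and $A^*(x)$ with $A^*(y)$ on each block, hence on all of $V$; since $A$ together with $A^*(x)$ (resp. $A^*(y)$) generates $T(x)$ (resp. $T(y)$), this suffices to conclude the $T$-isomorphism of standard modules. The only point requiring care is the pairing step, and that is made possible precisely by Lemma \ref{localspectrum}; once the multiplicities are known to coincide, the remaining verification is routine bookkeeping with the explicit standard bases and the matrix forms from Lemmas \ref{lem:Taylor_thin} and \ref{lem:Taylor WisoW'}.
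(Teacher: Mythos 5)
Your proposal is correct and follows essentially the same route as the paper: it invokes Lemma \ref{lem:Taylor WisoW'} and Lemma \ref{lem:Taylor_thin} to pin down the isomorphism classes, uses Lemma \ref{localspectrum} and \eqref{Taylor:local mult} to see that the multiplicities depend only on $k$ and $b$, and concludes via the decomposition \eqref{eq:decompV D=3}. The only difference is that you spell out the block-by-block construction of $\rho$ that the paper leaves implicit, which is harmless extra detail rather than a different argument.
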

\begin{proof}
For a given vertex $x\in X$, let $T(x)$ be the Terwilliger algebra of $\Gamma$.
By Lemma \ref{localspectrum} and Lemma \ref{lem:Taylor WisoW'}, the isomorphism class of an irreducible $T(x)$-module is determined by the parameters $k$ and $b$.
Moreover, by Lemma \ref{lem:Taylor_thin} and \eqref{Taylor:local mult}, the multiplicities of irreducible $T(x)$-modules are determined by $k$ and $b$.
Therefore, the isomorphism class of the standard $T(x)$-module \eqref{eq:decompV D=3} of $\Gamma$ is completely determined by $k$ and $b$.
The result follows.
\end{proof}

\begin{remark}\label{rmk:Taylor graphs}
Taylor graphs are not vertex-transitive in general. 
For example, there are four different Taylor graphs with intersection array $\{25,12,1;1,12,25\}$ of which only one is vertex-transitive; cf. \cite{Spence}.
\end{remark}

We finish this section with a comment. 

\begin{proposition}
For each $x\in X$, the dimension of $T(x)$ of $\Gamma$ is given by
\begin{equation*}
	\dim(T(x))=24.
\end{equation*}
\end{proposition}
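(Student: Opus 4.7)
The plan is to apply Proposition \ref{Prop:Wedderburn}, which reduces the problem to identifying the distinct isomorphism classes of irreducible $T(x)$-modules and summing the squares of their dimensions. First I would classify the irreducible $T(x)$-modules using the results already established for Taylor graphs.

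By Lemma \ref{lem:Taylor endpt=01}, every irreducible $T(x)$-module has endpoint $0$ or $1$. The endpoint-$0$ case contributes exactly one isomorphism class, namely the primary $T$-module $W_0$, which (since $\Gamma$ is thin of diameter $D=3$) has dimension $D+1=4$. The endpoint-$1$ case is handled by Proposition \ref{Taylor:thin} together with Lemma \ref{lem:Taylor_thin}(i)(ii): every such module is thin of diameter $1$ and hence of dimension $2$, with local eigenvalue in $\{\sigma,\tau\}$ from \eqref{Taylor:local eig}. By Lemma \ref{lem:Taylor WisoW'}, two endpoint-$1$ irreducible $T$-modules are $T$-isomorphic if and only if they share the same local eigenvalue, so endpoint-$1$ modules contribute at most two isomorphism classes.

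Next I would verify that both local eigenvalues $\sigma$ and $\tau$ actually occur. Since $\Gamma$ is tight, Lemma \ref{lem:tight char}(ii) says that the local graph $\Delta(x)$ is a connected strongly regular graph, and therefore has exactly three distinct eigenvalues $k_\Delta$, $\sigma$, $\tau$. Hence each of $\sigma$ and $\tau$ appears with positive multiplicity as an eigenvalue of $E^*_1(x)AE^*_1(x)$ on the $\mathbf{1}$-orthogonal part of $E^*_1V$, giving at least one irreducible $T(x)$-module of dimension $2$ for each of the two local eigenvalues. Consequently there are exactly three isomorphism classes of irreducible $T(x)$-modules, of dimensions $4$, $2$, and $2$.

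Finally, Proposition \ref{Prop:Wedderburn} (formula \eqref{T:dim formula}) gives
\[
  \dim(T(x)) = 4^2+2^2+2^2 = 24.
\]
The only potential obstacle is the verification that both $\sigma$ and $\tau$ occur as local eigenvalues; this is immediate from strong regularity of $\Delta(x)$, but it is the one place where the argument depends on non-degeneracy of the Taylor graph (specifically that $\Delta(x)$ has three, not two, distinct eigenvalues), and so deserves explicit mention.
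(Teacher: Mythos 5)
Your proposal is correct and follows essentially the same route as the paper: both apply Proposition \ref{Prop:Wedderburn} to the decomposition \eqref{eq:decompV D=3} into the primary module of dimension $4$ and the two isomorphism classes of endpoint-$1$ modules of dimension $2$, giving $4^2+2^2+2^2=24$. Your extra check that both $\sigma$ and $\tau$ occur with positive multiplicity (via connectivity and strong regularity of $\Delta(x)$) is a sound refinement that the paper leaves implicit.
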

\begin{proof}
Applying Proposition \ref{Prop:Wedderburn} to \eqref{eq:decompV D=3}, the algebra $T(x)$ is isomorphic to 
\begin{equation*}
	\mathrm{Mat}_{4}(\mathbb{C}) \oplus
	\mathrm{Mat}_{2}(\mathbb{C}) \oplus 
	\mathrm{Mat}_{2}(\mathbb{C}).
\end{equation*}
From \eqref{T:dim formula}, it follows that $\dim(T(x))=4^2+2^2+2^2=24$.
\end{proof}

\section{Antipodal tight graphs $\mathrm{AT}4(p,q,2)$}\label{sec:D=4}

In this section, we consider a distance-regular antipodal double cover with diameter four.
We recall some properties of this graph that are needed in this section.
For more information, we refer the reader to \cite{2003Jurisic, 2000JKEuroJC, 2002JKDiscMath}.
Let $\Gamma$ denote a non-bipartite distance-regular antipodal double cover with diameter four. 
Then $\Gamma$ is $Q$-polynomial if and only if $\Gamma$ is tight \cite{2002JKDiscMath}.
Suppose that $\Gamma$ is $Q$-polynomial. 
The intersection array of $\Gamma$ is parameterized by $p$, $q$  (cf. \cite[Section 5]{2002JKDiscMath}):
\begin{equation}\label{int_array:AT4}
\{q(pq+p+q), (q^2-1)(p+1), q(p+q)/2, 1; 1, q(p+q)/2, (q^2-1)(p+1); q(pq+p+q)\},
\end{equation}
where $p\geq 1$, $q\geq 2$ are integers.
The spectrum of $\Gamma$ is given by $\{\theta_0^{m_0}, \theta_1^{m_1}, \theta_2^{m_2}, \theta_3^{m_3}, \theta_4^{m_4} \}$, where 
\begin{equation}\label{eigvals:AT4(p,q,2)}
	\theta_0 = q(pq+p+q), \qquad \theta_1=pq+p+q, \qquad
	\theta_2 = p, \qquad \theta_3=-q, \qquad \theta_4=-q^2,
\end{equation}
and $m_0=1$,
\begin{align*}
	&m_1  = \frac{q(pq^2+q^2+pq-p)}{p+q}, &&
	m_2  = \frac{q(pq+p+q)(q^2-1)(2q+pq+p)}{(p+q)(p+q^2)},\\
	& m_3  = \frac{(pq^2+q^2+pq-p)(pq+p+q)}{p+q}, &&
	m_4  = \frac{(p+1)(pq+p+q)(pq^2+q^2+pq-p)}{(p+q)(p+q^2)}.
\end{align*}
For each $x\in X$ the local graph $\Delta=\Delta(x)$ of $\Gamma$ is connected strongly regular with parameters $(n_\Delta, k_\Delta, a_\Delta, c_\Delta)$, where (cf. \cite[Section 5]{2002JKDiscMath})
\begin{equation}\label{AT4(p,q,2)local para}
	n_\Delta=q(pq+p+q), \qquad k_\Delta=p(q+1), \qquad a_\Delta=2p-q, \qquad c_\Delta=p.
\end{equation}
The nontrivial eigenvalues of $\Delta$ are $b^+=p$ and $b^-=-q$ and their multiplicities are
\begin{equation}\label{AT4(p,q,2);mult}
	m_{b^+} = \frac{(q^2-1)(pq+p+q)}{p+q}, \qquad m_{b^-} = \frac{pq(q+1)(p+1)}{p+q}.
\end{equation}
Let $\mathrm{AT4}(p,q,r)$ denote an antipodal tight $r$-cover of diameter four with parameters $p$ and $q$.
Clearly, $\Gamma$ is the same thing as  $\mathrm{AT4}(p,q,2)$.
There are three known examples of an $\mathrm{AT4}(p,q,2)$, namely the following graphs with array $\{c_1,c_2,c_3,c_4\}$:
\begin{itemize}[itemsep=0.5pt]
	\item[(i)] $\mathrm{AT4}(2,2,2)$, the Johnson graph $J(8,4)$ with $\{1,4,9,16\}$;
	\item[(ii)] $\mathrm{AT4}(4,2,2)$, the half-cube $\frac{1}{2}H(8,2)$ with $\{1,6,15,28\}$;
	\item[(iii)] $\mathrm{AT4}(8,4,2)$, the Meixner1 graph with $\{1,24,135,176\}$.
\end{itemize}
\begin{remark}
In \cite[Theorem 1.1]{1996DicTer}, Dickie and Terwilliger gave a family of graphs with array $\{1, \beta\eta, (\beta^2-1)(2\eta-\beta+1),\beta(2\eta+2\eta\beta-\beta^2)\}$, where $\beta\geq 2$, $\eta \geq 3\beta/4$ are integers and $\eta$ divides $\beta^2(\beta^2-1)/2$. 
We remark that $\beta\eta$ is an even integer by \cite[Corollary 3.2]{2000JKEuroJC}.
With respect to the graph $\mathrm{AT}4(p,q,2)$, we note that $p=2\eta-\beta$ and $q=\beta$.
Using this and by \eqref{eigvals:AT4(p,q,2)}, the eigenvalues of $\mathrm{AT4}(p,q,2)$ are given in terms of $\beta$ and $\eta$ as follows. 
$$
	\theta_0 = \beta(2\eta+2n\beta-\beta^2), \quad
	\theta_1 = 2\eta+2n\beta-\beta^2, \quad
	\theta_2 = 2\eta, \quad
	\theta_3 = -\beta, \quad
	\theta_4 = -\beta^2.
$$
If $(\beta,\eta)=(2,2)$ we have $\mathrm{AT}4(2,2,2)$, and if $(\beta,\eta)=(2,3)$ we have $\mathrm{AT}4(4,2,2)$; these graphs are all examples known for $\beta=2$.
In addition, if $(\beta,\eta)=(4,6)$ we have the Meixner1 graph $\mathrm{AT}4(8,4,2)$, which is the only known example for $\beta>2$ so far.
\end{remark}

For the rest of this section, we denote by $\Gamma$ an antipodal tight graph $\mathrm{AT4}(p,q,2)$ with vertex set $X$.
Fix $x\in X$ and write $T=T(x)$ the Terwilliger algebra of $\Gamma$ and $E^*_i=E^*_i(x)$ the $i$th dual primitive idempotent of $\Gamma$ for $0 \leq i \leq 4$.
We recall a classification of all thin irreducible $T$-modules of $\Gamma$ with endpoint $1$.

\begin{lemma}\label{lem:W endpt=1}
Let $W$ denote a thin irreducible $T$-module with endpoint $1$ and local eigenvalue $\lambda \in \{p, -q\}$.
Then $W$ has diameter $2$.
If $W'$ is an irreducible $T$-module, then $W$ and $W'$ are isomorphic as $T$-modules if and only if $W'$ is thin with endpoint $1$ and local eigenvalue $\lambda$.
\end{lemma}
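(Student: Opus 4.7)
The plan is to invoke the general structure theory of Go and Terwilliger for tight irreducible $T$-modules, which has already been recorded in Section \ref{sec:antipodal 2-cover} as Lemma \ref{Thm10.7:TerGo} and Lemma \ref{Thm11.7:TerGo}, and to verify that the hypotheses of those lemmas are met in the setting of $\mathrm{AT4}(p,q,2)$. Recall that $\Gamma$ is a non-bipartite antipodal $Q$-polynomial distance-regular graph with diameter $D=4$, and that it is tight, with $b^+ = p$ and $b^- = -q$ as the nontrivial local eigenvalues; consequently the hypothesis $\lambda \in \{p, -q\} = \{b^+, b^-\}$ places us precisely in the situation treated by those lemmas.

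For the diameter claim, I would apply Lemma \ref{Thm10.7:TerGo} directly. That lemma gives $\dim W = D-1 = 3$, together with $E^*_iW = 0$ for $i \in \{0, D\} = \{0, 4\}$ and $\dim E^*_iW = 1$ for $i \in \{1,2,3\}$. Since $W$ has endpoint $1$ and its nonzero subconstituent components are exactly $E^*_1W, E^*_2W, E^*_3W$, the diameter of $W$ is $3 - 1 = 2$, as claimed. No additional computation is needed.

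For the isomorphism characterization, I would apply Lemma \ref{Thm11.7:TerGo} directly to $\Gamma$. One direction is immediate: if $W$ and $W'$ are isomorphic as $T$-modules, then $W'$ inherits thinness, endpoint, and local eigenvalue from $W$. For the converse, Lemma \ref{Thm11.7:TerGo} guarantees that any thin irreducible $T$-module with endpoint $1$ and the same local eigenvalue as $W$ is isomorphic to $W$.

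There is no substantive obstacle here; the lemma is essentially a specialization of known general results to the parameter choices $D = 4$ and $\{b^+, b^-\} = \{p, -q\}$. The only thing to be careful about in the write-up is to state cleanly that the hypothesis $\lambda \in \{p, -q\}$ in our setting is exactly the hypothesis $\lambda \in \{b^+, b^-\}$ needed to invoke Lemmas \ref{Thm10.7:TerGo} and \ref{Thm11.7:TerGo}, so that the proof reads as a short verification rather than a reproof of the Go--Terwilliger machinery.
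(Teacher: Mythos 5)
Your proposal matches the paper's proof exactly: the paper's argument is simply ``Apply Lemma \ref{Thm10.7:TerGo} and Lemma \ref{Thm11.7:TerGo},'' and you have correctly identified that $b^+=p$, $b^-=-q$ places $\lambda$ in the hypotheses of those lemmas, with the diameter $2$ following from $\dim W = D-1 = 3$ and the vanishing of $E^*_0W$ and $E^*_4W$. Your write-up is just a more explicit version of the same verification.
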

\begin{proof}
Apply Lemma \ref{Thm10.7:TerGo} and Lemma \ref{Thm11.7:TerGo}.
\end{proof}

Next, we discuss irreducible $T$-modules of $\Gamma$ with endpoint $2$.

\begin{lemma}\label{lem:W endpt=2}
If $W$ is an irreducible $T$-module with endpoint $2$, then $W$ has diameter $0$ and is thin.
\end{lemma}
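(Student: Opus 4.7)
My plan is to leverage the fact that $\Gamma$ is an antipodal double cover with $D=4$, so for any vertex $x \in X$ the antipode $\hat{x}$ satisfies $E^*_i(x) = E^*_{4-i}(\hat{x})$. By Lemma \ref{T(x)-mod T(x^)-mod} the subspace $W$ is simultaneously an irreducible $T(\hat{x})$-module, and the whole argument reduces to determining its endpoint with respect to $\hat{x}$.

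Since $W$ has endpoint $2$ as a $T(x)$-module, $E^*_0(x)W = E^*_1(x)W = 0$, which translates to $E^*_4(\hat{x})W = E^*_3(\hat{x})W = 0$. Let $r'$ denote the endpoint of $W$ as a $T(\hat{x})$-module; then $r' \in \{0,1,2\}$. If $r' = 0$, then $W$ is the primary $T(\hat{x})$-module, which has diameter $D=4$, so $E^*_4(\hat{x})W \ne 0$ — contradicting the displayed vanishing. If $r' = 1$, then because $\Gamma$ is tight with respect to every vertex by Lemma \ref{lem:tight char}(iii), the module $W$ is automatically thin with local eigenvalue in $\{p,-q\}$, so Lemma \ref{lem:W endpt=1} applied at $\hat{x}$ forces the diameter of $W$ to be $2$ and hence $E^*_3(\hat{x})W \ne 0$ — another contradiction. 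Thus $r' = 2$, that is, $E^*_4(x)W = E^*_3(x)W = 0$, and combined with the hypothesis we obtain $E^*_i(x)W = 0$ for every $i \ne 2$. This gives $W = E^*_2(x)W$ and shows that $W$ has diameter $0$.

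For thinness, I need to show $\dim W = 1$. Since $W \subseteq E^*_2(x)V$ and $W$ is $A$-invariant, the restriction $A|_W$ coincides with $E^*_2(x)AE^*_2(x)|_W$ and is therefore Hermitian, hence orthogonally diagonalizable. Each eigenspace of $A|_W$ is $A$-invariant and trivially $E^*_j(x)$-invariant for all $j$ (acting as the identity for $j=2$ and as zero otherwise), so it is a $T$-submodule of $W$. Irreducibility of $W$ then forces $A|_W$ to be a scalar operator, and once $A$ acts as a scalar on $W$ every line of $W$ is a $T$-submodule, which finally forces $\dim W = 1$.

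The main obstacle is arranging the endpoint analysis cleanly: the antipodal symmetry must be invoked both to move $W$ to a $T(\hat{x})$-module and to translate the vanishing statements, and the exclusion of $r'=1$ depends on combining tightness with the external input Lemma \ref{lem:W endpt=1}. Once diameter $0$ is in hand, thinness is a short semisimplicity argument.
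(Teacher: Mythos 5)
Your proof is correct and follows essentially the same route as the paper: both exclude the larger diameters by passing to the antipode $\hat{x}$ (your endpoint cases $r'=0$ and $r'=1$ correspond exactly to the paper's diameter cases $d=2$ and $d=1$, the latter handled identically via tightness and Lemma \ref{lem:W endpt=1}). The only substantive difference is that you spell out the final one-dimensionality argument, which the paper merely asserts from irreducibility.
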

\begin{proof}
Let $d$ denote the diameter of $W$.
Observe that the possible value for  $d$ is $0$, $1$, or $2$.
If $d=2$, then $\dim(W)=3$ and $E^*_4W=E^*_0(\hat{x})W\ne 0$.
Take a nozero vector $w \in E^*_0(\hat{x})W$.
Since $W$ is a $T$-module, we have $Mw \subseteq W$. 
Observe that $Mw$ is the primary $T(\hat{x})$-module. 
It follows $\dim (Mw)=5$, which is greater than $\dim(W)=3$, a contradiction.
If $d=1$, then $E^*_3W=E^*_1(\hat{x})W$ is nonzero. 
It follows that $W$ is an irreducible $T(\hat{x})$-module with endpoint $1$.
As $\Gamma$ is tight, by Lemma \ref{lem:tight char}(iii) $W$ is thin and has local eigenvalue $\lambda\in \{p, -q\}$.
By Lemma \ref{lem:W endpt=1}, we have $d=2$, a contradiction.
By these comments, we have $d=0$.
Since $W$ is an irreducible $T$-module, the dimension of $W$ is one. The result follows.
\end{proof}

\begin{lemma}\label{lem:W endpt=2 classf}
Let $W$ be an irreducible $T$-module with endpoint $2$.
Let $\eta$ be an eigenvalue of $E^*_2AE^*_2$ with an eigenspace $W$.
If $W'$ is another irreducible $T$-module with endpoint $2$, then $W$ and $W'$ are isomorphic as $T$-modules if and only if $W'$ is an eigenspace associated with the eigenvalue $\eta$.
\end{lemma}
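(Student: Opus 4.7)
The plan is to exploit the structural information already established in Lemma \ref{lem:W endpt=2}: every irreducible $T$-module $W$ with endpoint $2$ has diameter $0$, is thin, and consequently $W=E^*_2 W$ is one-dimensional. On any such $W$, both $A$ and $A^*$ act as scalars, and since $T$ is generated by $A$ and $A^*$, the $T$-isomorphism class of $W$ will be completely determined by those two scalars.

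First I would pin down the scalar action of $A$ on $W$. For a nonzero $w\in W=E^*_2 W$, the relation $E^*_iAE^*_j=0$ whenever $|i-j|>1$ gives
\begin{equation*}
Aw = E^*_1AE^*_2w + E^*_2AE^*_2w + E^*_3AE^*_2w.
\end{equation*}
Since $W$ is $T$-invariant, each summand lies in $W$; but $W\subseteq E^*_2V$ forces $E^*_1AE^*_2w=0$ and $E^*_3AE^*_2w=0$. Hence $Aw=E^*_2AE^*_2w$, so $A$ acts on the one-dimensional space $W$ as scalar multiplication by $\eta$, the eigenvalue of $E^*_2AE^*_2$ for which $W$ is the corresponding eigenline. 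Similarly, because $W=E^*_2W$, the dual adjacency matrix $A^*$ acts on $W$ by $\theta^*_2$.

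For the forward direction, suppose $\sigma:W\to W'$ is a $T$-module isomorphism and let $\eta'$ denote the $E^*_2AE^*_2$-eigenvalue on $W'$. Evaluating $\sigma A=A\sigma$ at a nonzero $w\in W$ yields $\eta\,\sigma(w)=A\sigma(w)=\eta'\,\sigma(w)$, so $\eta=\eta'$, i.e., $W'$ lies in the $\eta$-eigenspace of $E^*_2AE^*_2$. For the converse, assume $W'$ is an irreducible $T$-module with endpoint $2$ and with the same eigenvalue $\eta$. By the above, both $W$ and $W'$ are one-dimensional with $A$ acting as $\eta$ and $A^*$ as $\theta^*_2$. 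Fix nonzero vectors $w\in W$, $w'\in W'$ and define $\sigma:W\to W'$ by $\sigma(cw)=cw'$ for $c\in\mathbb{C}$. Then $\sigma$ is a $\mathbb{C}$-vector space isomorphism that commutes with both $A$ and $A^*$, and since $A$ and $A^*$ generate $T$, it is a $T$-module isomorphism.

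The only mildly delicate point is the observation that $A$ acts on $W$ through $E^*_2AE^*_2$ alone; once this is in hand, everything else is forced by one-dimensionality and the fact that $T=\langle A,A^*\rangle$.
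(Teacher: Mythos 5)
Your proposal is correct and follows essentially the same route as the paper: both arguments reduce to the one-dimensionality of $W$ and $W'$ guaranteed by Lemma \ref{lem:W endpt=2}, observe that the candidate isomorphism automatically commutes with each $E^*_i$, and note that it commutes with $A$ exactly when the two $E^*_2AE^*_2$-eigenvalues agree. Your justification that $A$ acts on $W$ through $E^*_2AE^*_2$ alone simply makes explicit a step the paper leaves implicit.
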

\begin{proof}
Let $\eta'$ denote an eigenvalue of $E^*_2AE^*_2$ with the eigenspace $W'$.
By Lemma \ref{lem:W endpt=2}, $W$ and $W'$ are both one-dimensional. 
Let $\rho$ denote a vector space isomorphism from $W$ and $W'$.
Apparently, $(E^*_i \rho - \rho E^*_i)W=0$ for all $0 \leq i \leq 4$.
Also, we find that $(A\rho-\rho A)W=0$  if and only if $\eta=\eta'$.
The result follows.
\end{proof}

\begin{lemma}\label{lem:q4,p,q}
Recall $\Gamma$ an antipodal tight graph $\mathrm{AT4}(p,q,2)$.
For each vertex $x\in X$ and for each $1 \leq i \leq 4$, the spectrum of $i$th subconstituent $\Delta_i(x)$ of $\Gamma$  is determined by the parameters $p, q$. 
\end{lemma}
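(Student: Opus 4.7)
The plan is to treat the four subconstituents separately. The cases $i=4$ and $i\in\{1,3\}$ are easy: $\Delta_4(x)$ consists of the single antipode $\hat{x}$ and has spectrum $\{0^1\}$; and because $\Gamma$ is an antipodal double cover with $D=4$, we have $\Gamma_3(x)=\Gamma_1(\hat{x})$, so $\Delta_3(x)$ coincides with the local graph $\Delta_1(\hat{x})$. By Lemma~\ref{lem:tight char}(ii) together with \eqref{AT4(p,q,2)local para} and \eqref{AT4(p,q,2);mult}, every local graph of $\Gamma$ is strongly regular with parameters and spectrum determined entirely by $p$ and $q$. This settles $i\in\{1,3,4\}$.

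For $i=2$ I would use the decomposition of the standard $T(x)$-module $V$ into thin irreducible $T(x)$-modules, available because $\Gamma$ is thin (as established earlier in this section via Lemmas~\ref{lem:W endpt=1} and \ref{lem:W endpt=2}). Every irreducible $T(x)$-module falls into one of four types: (a) the primary module $W_0$ of dimension $5$ (endpoint $0$); (b) an endpoint-$1$ module of dimension $3$ with local eigenvalue $b^+=p$; (c) an endpoint-$1$ module of dimension $3$ with local eigenvalue $b^-=-q$; (d) an endpoint-$2$ module of dimension $1$ on which $A$ acts as a scalar $\theta_j\in\{\theta_1,\theta_2,\theta_3,\theta_4\}$. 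Modules of endpoint $3$ or $4$ at $x$ are, via Lemma~\ref{T(x)-mod T(x^)-mod} and the identity $E^*_i(x)=E^*_{4-i}(\hat{x})$, already covered by the same list read at $\hat{x}$. Write $\alpha,\beta$ for the multiplicities of types (b) and (c) and $\gamma_j$ for the multiplicity of the endpoint-$2$ modules with $A$-eigenvalue $\theta_j$. Restricting the decomposition to $E^*_1V$ and comparing with the known spectrum of $\Delta_1(x)$ gives $\alpha=m_{b^+}$ and $\beta=m_{b^-}$, both determined by $p,q$ via \eqref{AT4(p,q,2);mult}. Next, applying Lemma~\ref{Thm10.7:TerGo} to read off $\dim(E_jW)$ for each module type yields the linear system
\begin{align*}
m_1 &= 1+\alpha+\gamma_1, & m_2 &= 1+\alpha+\beta+\gamma_2, \\
m_3 &= 1+\alpha+\beta+\gamma_3, & m_4 &= 1+\beta+\gamma_4,
\end{align*}
which determines every $\gamma_j$ in terms of $p$ and $q$.

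With the multiplicities in hand, the spectrum of $\Delta_2(x)$ can be read off the decomposition of $E^*_2V$: the primary module contributes the intersection number $a_2$ with multiplicity $1$; each endpoint-$1$ module $W$ of type (b) or (c) contributes on its one-dimensional slice $E^*_2W$ the scalar $a_1(W)$, which by \eqref{sum:ai=thi}, \eqref{eq:a_i = a_(d-i)} and Lemma~\ref{lem:dual endpt W} equals $\theta_t+\theta_{t+1}+\theta_{t+2}-2\lambda$ for the appropriate dual endpoint $t$ and local eigenvalue $\lambda\in\{p,-q\}$; and each endpoint-$2$ module contributes its $A$-eigenvalue $\theta_j$ with multiplicity $\gamma_j$. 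Every eigenvalue and every multiplicity is thereby expressed as a function of $p$ and $q$, completing the argument. The main obstacle is the bookkeeping for the endpoint-$2$ multiplicities: one must apply Lemma~\ref{Thm10.7:TerGo} carefully to separate the contributions of type (b) and type (c) modules to each $E_jV$; once this is set up, the remaining computation is routine.
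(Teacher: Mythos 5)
Your proposal is correct, and for the crucial case $i=2$ it takes a genuinely different route from the paper. The paper splits into two cases according to whether the Krein parameter $q^4_{44}$ vanishes: if $q^4_{44}=0$ it invokes Juri\v{s}i\'{c}'s result that $\Delta_2(x)$ is itself an antipodal distance-regular graph of diameter four with parameters determined by $p,q$; if $q^4_{44}\neq 0$ it uses the $1$-homogeneity of $\Gamma$ to conclude that $\Delta_2(x)$ is edge-regular, identifies the same candidate eigenvalues you do ($a_2$ from the primary module, one value $a_1(W)=\theta_t+\theta_{t+1}+\theta_{t+2}-2\lambda$ for each local eigenvalue $\lambda\in\{p,-q\}$, and $\theta_1,\ldots,\theta_4$ from the endpoint-$2$ modules), and then pins down the unknown multiplicities by solving the power-sum system $\mathrm{trace}(B^\ell)=\sum_i\theta_i^\ell m_i$, $\ell=0,1,2,3$, where $B=E^*_2AE^*_2$ and the traces are computable from edge-regularity. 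You instead determine the endpoint-$2$ multiplicities $\gamma_j$ by counting dimensions in $E_jV$: the primary module contributes $1$ to every $m_j$, the type-(b) and type-(c) modules contribute to $\{m_1,m_2,m_3\}$ and $\{m_2,m_3,m_4\}$ respectively by Lemma~\ref{Thm10.7:TerGo}, and the resulting linear system $m_1=1+\alpha+\gamma_1$, $m_2=1+\alpha+\beta+\gamma_2$, $m_3=1+\alpha+\beta+\gamma_3$, $m_4=1+\beta+\gamma_4$ is immediately solvable since $m_j$, $\alpha=m_{b^+}$, $\beta=m_{b^-}$ are explicit functions of $p,q$. This checks out (and I verified the dimension count $\sum_j m_j=5+3\alpha+3\beta+\sum_j\gamma_j=\dim V$ is consistent), so your argument buys a uniform treatment with no case split on $q^4_{44}$ and no appeal to $1$-homogeneity; what it gives up is the by-product the authors point out after their proof, namely that the trace computation yields a potential new feasibility condition for an $\mathrm{AT}4(p,q,2)$. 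One small point to tighten: rather than saying endpoint-$3$ and endpoint-$4$ modules are ``covered by the same list read at $\hat{x}$,'' you should note that they do not occur at all — an endpoint-$3$ or endpoint-$4$ module at $x$ would have endpoint $0$ or $1$ as a $T(\hat{x})$-module (and $T(x)=T(\hat{x})$), forcing it to be the primary module or a thin tight module supported on $E^*_1(x)V+E^*_2(x)V+E^*_3(x)V$, contradicting its endpoint at $x$; in any case such modules would meet neither $E^*_1(x)V$ nor $E^*_2(x)V$, so they cannot disturb either of your two counting arguments.
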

\begin{proof}
Fix a vertex $x$ in $\Gamma$.
Since $\Gamma$ is tight, $\Delta_1(x)=\Delta_3(x)$ is strongly regular with the parameters 
$
	(q(pq+p+q), p(q+1), 2p-q,p),
$
and has the nontrivial eigenvalues $p, -q$.
It suffices to show that the spectrum of the second subconstituent $\Delta_2=\Delta_2(x)$ of $\Gamma$ is determined by $p$ and $q$.
We first assume the Krein parameter $q^4_{44}=0$.
By \cite[Theorem 5.5]{2003Jurisic}, $\Delta_2$ is an antipodal distance-regular graph with diameter four, and intersection array and the spectrum of $\Delta_2$ are determined by $p$ and $q$. 
Now we assume $q^4_{44}\ne 0$.
As the graph $\Gamma$ is $1$-homogeneous in the sense of Nomura (cf. \cite[Theorem 11.7]{2000JKT}), $\Delta_2$ is edge-regular. 
We claim that (i) $\Delta_2$ has at most $7$ distinct eigenvalues; and (ii) each eigenvalue and its multiplicity are determined by $p$ and $q$.

First, we find that $a_2$ is the eigenvalue of $\Delta_2$ with multiplicity $1$.
Next, let $W$ denote a thin irreducible $T$-module with endpoint $1$ and local eigenvalue $\lambda \in \{p, -q\}$.
By Lemma \ref{lem:W endpt=1}, $W$ has diameter $2$.
Take a nonzero vector $w_0 \in E^*_1W$. 
Set $w_i = E^*_{i}Aw_{i-1}$ for $i=1,2$.
Then $\{w_i\}^2_{i=0}$ is a basis for $W$, and from \eqref{eq:action Av_i} the action of $A$ on the basis $\{w_i\}^2_{i=0}$ is given by
$$
	Aw_i = w_{i+1} + a_i(W)w_i + x_i(W)w_{i-1}. 
$$
Observe that $a_0(W)=\lambda$ and $a_0(W)=a_2(W) =\lambda$ by \eqref{eq:a_i = a_(d-i)}. 
Observe also that $a_1(W)$ is an eigenvalue of $E^*_2AE^*_2$, i.e., an eigenvalue of $\Delta_2$. 
By this and using \eqref{sum:ai=thi} and Lemma \ref{lem:dual endpt W}, we have
$$
	a_1(W) = \theta_t + \theta_{t+1} + \theta_{t+2} - 2\lambda,
$$
where $t=1$ if $\lambda=p$ and $t=2$ if $\lambda=-q$.
From this and by \eqref{eigvals:AT4(p,q,2)}, we find that $a_1(W)$ is determined by $p, q$.
Since each $\lambda$ yields the scalar $a_1(W)$, multiplicities of $\lambda$ and $a_1(W)$ are equal to each other.
By Lemma \ref{lem:W endpt=1} there are two non-isomorphic $T$-modules with endpoint $1$.
It follows that there are two eigenvalues of $\Delta_2$, which are obtained from irreducible $T$-modules with endpoint $1$.

Finally, let $W$ denote a thin irreducible $T$-module with endpoint $2$.
By Lemma \ref{lem:W endpt=2}, $W$ is a one-dimensional eigenspace for $E^*_2AE^*_2$.
It follows that the corresponding eigenvalue of $\Delta_2$ becomes an eigenvalue of $\Gamma$. 
From this, we find that $\Delta_2$ has four possible eigenvalues $\theta_1$, $\theta_2$, $\theta_3$, $\theta_4$. 
For each $1 \leq i \leq 4$, let $m_i$ denote the multiplicity of $\theta_i$.
Abbreviate $B=E^*_2AE^*_2$.
Since $\Delta_2$ is edge-regular and $\mathrm{trace} (B^\ell) = \sum^4_{i=1} \theta^\ell_i m_i$, $\ell=0,1,2,3$, we can solve this system of equations for $m_i$, $1\leq i \leq 4$, where $m_i$ are determined by $p, q$.
The result follows.
\end{proof}
\noindent
We remark that the proof of Lemma \ref{lem:q4,p,q} is motivated to give a new feasibility condition for an $\mathrm{AT}4(p,q,r)$; see \cite{2022XLK}

\begin{proposition}\label{prop:AT4thin}
An antipodal tight graph $\mathrm{AT}4(p,q,2)$ is thin.
\end{proposition}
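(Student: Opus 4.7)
The plan is to classify all possible endpoint--diameter pairs $(r,d)$ for an irreducible $T(x)$-module $W$ of $\Gamma = \mathrm{AT}4(p,q,2)$ and verify thinness in each surviving case. The lemmas immediately preceding the proposition already cover $r=1$ (Lemma \ref{lem:W endpt=1}, which exploits tightness) and $r=2$ (Lemma \ref{lem:W endpt=2}). The remaining work is to handle $r=0$ directly and to reduce the large-endpoint cases $r\in\{3,4\}$ to the treated ones via the antipodal symmetry of $\Gamma$.

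For the antipodal reduction, I would fix $x\in X$ with antipode $\hat{x}$ and observe that $E^*_i(x) = E^*_{4-i}(\hat{x})$ for $0\leq i\leq 4$, since the sets $\Gamma_i(x)$ and $\Gamma_{4-i}(\hat{x})$ coincide. Combined with Lemma \ref{T(x)-mod T(x^)-mod}, this shows that if $W$ is an irreducible $T(x)$-module with endpoint $r$ and diameter $d$, then $W$ is an irreducible $T(\hat{x})$-module with endpoint $4-r-d$ and the same diameter, and $\dim E^*_i(x) W = \dim E^*_{4-i}(\hat{x}) W$ for each $i$. In particular, thinness of $W$ as a $T(x)$-module is equivalent to thinness as a $T(\hat{x})$-module. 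Since $r+(4-r-d) = 4-d\leq 4$, at least one of the two endpoints lies in $\{0,1,2\}$; replacing $x$ with $\hat{x}$ if necessary, one may assume $r\in\{0,1,2\}$.

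It then suffices to handle these three cases. If $r=0$, then $E^*_0(x) W$ is a nonzero subspace of the one-dimensional space $E^*_0(x) V$, so $E^*_0(x) V\subseteq W$; hence $W$ contains, and therefore equals, the primary $T(x)$-module, which is thin with diameter $4$. If $r=1$, the comment following Lemma \ref{lem:tight char} shows that $\Gamma$ is tight (being non-bipartite antipodal $Q$-polynomial), so Lemma \ref{lem:tight char}(iii) yields that $W$ is thin with local eigenvalue $b^+=p$ or $b^-=-q$. If $r=2$, Lemma \ref{lem:W endpt=2} directly asserts that $W$ is thin and has diameter zero.

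The main obstacle is avoiding a direct analysis of the high-endpoint modules, which the earlier lemmas do not address and which could, a priori, harbor non-thin summands. The antipodal symmetry argument above dissolves this obstacle cleanly, and it mirrors the technique already used in the proof of Lemma \ref{lem:W endpt=2}, where the primary $T(\hat{x})$-module was invoked to constrain the diameter when $r=2$. With all five possible endpoints accounted for, every irreducible $T(x)$-module is thin, and since $x$ was arbitrary, the graph $\Gamma = \mathrm{AT}4(p,q,2)$ is thin.
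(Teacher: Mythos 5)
Your proof is correct and follows essentially the same route as the paper's: endpoint-$1$ modules are thin by tightness (Lemma \ref{lem:tight char}(iii)) and endpoint-$2$ modules by Lemma \ref{lem:W endpt=2}. The only difference is that you explicitly dispose of the endpoint-$0$ case (primary module) and use the antipodal identity $E^*_i(x)=E^*_{4-i}(\hat{x})$ to reduce endpoints $3$ and $4$ to the treated cases, which the paper's two-line proof leaves implicit; this is a welcome tightening rather than a new method.
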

\begin{proof}
For any vertex $x$, an irreducible $T(x)$-module of $\mathrm{AT}4(p,q,2)$ with endpoint $1$ is thin since $\mathrm{AT}4(p,q,2)$ is tight.
In addition, every irreducible $T(x)$-module of $\mathrm{AT}4(p,q,2)$ with endpoint $2$ is thin by Lemma \ref{lem:W endpt=2}.
The result follows.
\end{proof}

Recall the standard $T$-module $V$.
Since $T$ is semisimple and using Lemma \ref{lem:W endpt=1} and Lemma \ref{lem:W endpt=2 classf}, $V$ decomposes into an orthogonal direct sum of thin irreducible $T$-modules 
\begin{equation}\label{eq:decompV D=4}
	V=W_0 \oplus (\bigoplus_i W_i) \oplus (\bigoplus_j W'_j) \oplus (\bigoplus_h W''_h),
\end{equation}
where we denote by
\begin{equation*}
\renewcommand{\arraystretch}{1.2}
\begin{tabular}{ccccc}
	\hline
	$T$-module  & dimension & endpoint & diameter & comment\\
	\hline
	$W_0$ & 5 & 0 & 4 & the primary module \\
	 $W_i$  & 3 & 1 & 2 &  local eigenvalue $p$\\
	 $W'_j$  & 3 & 1 & 2 & local eigenvalue $-q$ \\
	 $W''_h$ & 1 & 2  & 0 & eigenspace of $E^*_2AE^*_2$\\
	\hline
\end{tabular}.
\end{equation*}

\begin{theorem}\label{thm:D=4}
An antipodal tight graph $\mathrm{AT}4(p,q,2)$ is pseudo-vertex-transitive.
\end{theorem}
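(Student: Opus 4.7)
The plan is to mirror the short proof of Theorem \ref{thm:D=3}, using the decomposition \eqref{eq:decompV D=4} of the standard $T(x)$-module into thin irreducible summands. It suffices to show that, for each type of summand appearing in \eqref{eq:decompV D=4}, both its isomorphism class and its multiplicity in $V$ depend only on the parameters $p$ and $q$; then by Definition \ref{pvt} and Lemma \ref{lem:[A]_B=[A']_B'} the standard $T(x)$- and $T(y)$-modules are $T$-isomorphic for any $x,y\in X$, whence $\Gamma$ is pseudo-vertex-transitive.

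The primary module $W_0$ is unique up to $T$-isomorphism, has multiplicity $1$, and the matrices representing $A$ and $A^*(x)$ on its standard basis are encoded by the intersection array \eqref{int_array:AT4}. For the irreducible $T(x)$-modules with endpoint $1$, Proposition \ref{prop:AT4thin} guarantees thinness, and Lemma \ref{lem:W endpt=1} says their isomorphism class is determined by the local eigenvalue $\lambda\in\{p,-q\}$. The multiplicity of either class equals the dimension of the $\lambda$-eigenspace of $E^*_1(x)AE^*_1(x)$ (automatically orthogonal to $E^*_1(x)\mathds{1}$ because $\lambda$ is a nontrivial local eigenvalue), and since $\Delta(x)$ is strongly regular with parameters \eqref{AT4(p,q,2)local para} depending only on $p,q$, these multiplicities are precisely $m_{b^+}$ and $m_{b^-}$ from \eqref{AT4(p,q,2);mult}.

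For the irreducible $T(x)$-modules with endpoint $2$, Lemma \ref{lem:W endpt=2} shows each is one-dimensional, hence spanned by an eigenvector of $A$ in $E^*_2(x)V$ whose eigenvalue $\eta$ must lie in $\{\theta_1,\theta_2,\theta_3,\theta_4\}$ (the trivial eigenvalue $\theta_0$ is excluded since $\mathds{1}\notin E^*_2(x)V$), and Lemma \ref{lem:W endpt=2 classf} pins down the isomorphism class by $\eta$. To obtain the multiplicity of each $\eta$ in \eqref{eq:decompV D=4}, I would take the multiplicity of $\eta$ in $\Spec(\Delta_2(x))$ and subtract the contributions already accounted for by $W_0$ (one copy of the valency $a_2$) and by the endpoint-$1$ modules (the value $a_1(W)=\theta_t+\theta_{t+1}+\theta_{t+2}-2\lambda$ computed inside the proof of Lemma \ref{lem:q4,p,q}, occurring with multiplicities $m_{b^+}$ and $m_{b^-}$). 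Since Lemma \ref{lem:q4,p,q} already guarantees that $\Spec(\Delta_2(x))$ is determined by $p,q$, this bookkeeping produces multiplicities that depend only on $p,q$. This last step is the main obstacle, but the heavy spectral work is already done in Lemma \ref{lem:q4,p,q}, and all that remains is linear arithmetic to distribute the spectrum among the three types of summands. Combining the three cases, the isomorphism class of the standard $T(x)$-module depends only on $p,q$, independent of $x$, and the theorem follows.
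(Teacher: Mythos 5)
Your proposal is correct and follows essentially the same route as the paper's proof: the paper likewise cites Lemma \ref{lem:W endpt=1} and Lemma \ref{lem:W endpt=2 classf} to fix the isomorphism classes of the summands in \eqref{eq:decompV D=4}, and Lemma \ref{lem:q4,p,q} together with \eqref{AT4(p,q,2);mult} to fix their multiplicities in terms of $p,q$. Your only addition is to spell out explicitly the bookkeeping that distributes $\Spec(\Delta_2(x))$ among the primary, endpoint-$1$, and endpoint-$2$ summands, which the paper leaves implicit.
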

\begin{proof}
For a given vertex $x\in X$, let $T(x)$ be the Terwilliger algebra of $\mathrm{AT}4(p,q,2)$. 
By Lemma \ref{lem:W endpt=1} and Lemma \ref{lem:W endpt=2 classf}, the isomorphism class of an irreducible $T(x)$-module is determined by the parameters $p$ and $q$.
Moreover, by Lemma \ref{lem:q4,p,q} together with \eqref{AT4(p,q,2);mult}, the multiplicities of irreducible $T(x)$-modules are determined by $p$ and $q$.
Therefore, the isomorphism class of the standard $T(x)$-module \eqref{eq:decompV D=4} is completely determined by $p$ and $q$. The result follows.
\end{proof}

We finish this section with a comment. 

\begin{proposition}
For each $x\in X$ the dimension of $T(x)$ of $\mathrm{AT}4(p,q,2)$ is given by
\begin{equation*}
	\dim(T(x))=\ell+43,
\end{equation*}
where  $\ell (\leq 7)$ is the number of distinct eigenvalues of $\Delta_2(x)$ except for the  eigenvalue $a_2$.
\end{proposition}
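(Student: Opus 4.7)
The plan is to invoke Proposition~\ref{Prop:Wedderburn} applied to the orthogonal direct sum decomposition~\eqref{eq:decompV D=4} of the standard $T(x)$-module $V$ into thin irreducible $T(x)$-submodules. The first step is to enumerate the distinct isomorphism classes of irreducible $T(x)$-modules and their dimensions: the primary module $W_0$ is the unique class of dimension $D+1=5$; by Lemma~\ref{lem:W endpt=1}, the endpoint-$1$ modules split into exactly two isomorphism classes, indexed by the two possible local eigenvalues $p$ and $-q$, each of dimension $D-1=3$; and by Lemmas~\ref{lem:W endpt=2} and~\ref{lem:W endpt=2 classf}, each endpoint-$2$ module is one-dimensional and its isomorphism class is determined by its eigenvalue under $E^*_2(x) A E^*_2(x)$.

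Applying the Wedderburn formula $\dim T(x)=\sum n_i^2$, the primary contributes $5^2=25$, the two endpoint-$1$ classes contribute $3^2+3^2=18$, and each endpoint-$2$ isomorphism class contributes $1^2=1$. Writing $\ell$ for the number of distinct endpoint-$2$ isomorphism classes, we arrive at
\[
\dim T(x) = 25 + 18 + \ell = \ell + 43.
\]

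The remaining task is to identify this $\ell$ with the number of distinct eigenvalues of $\Delta_2(x)$ other than $a_2$. To do this, one decomposes the spectrum of $\Delta_2(x)=E^*_2(x)AE^*_2(x)|_{E^*_2V}$ according to the direct sum~\eqref{eq:decompV D=4}: the primary module contributes the eigenvalue $a_2$ (with multiplicity $1$); each endpoint-$1$ class of local eigenvalue $\lambda\in\{p,-q\}$ contributes the single eigenvalue $a_1(W)=\theta_t+\theta_{t+1}+\theta_{t+2}-2\lambda$, where $t$ is the dual endpoint from Lemma~\ref{lem:dual endpt W}; and each endpoint-$2$ class contributes its own eigenvalue in $\{\theta_1,\theta_2,\theta_3,\theta_4\}$. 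The a priori bound $\ell\leq 7$ follows from the $1+2+4=7$ possible eigenvalue sources (primary, endpoint-$1$, endpoint-$2$) in $\Delta_2(x)$.

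The main obstacle is the careful bookkeeping in this last step: one must track any coincidences among the eigenvalues $a_2$, $\alpha_p=\theta_1+\theta_2+\theta_3-2p$, $\alpha_q=\theta_2+\theta_3+\theta_4+2q$, and $\{\theta_1,\theta_2,\theta_3,\theta_4\}$ so as to confirm that the distinct eigenvalues of $\Delta_2(x)$ other than $a_2$ are in bijection with the $\ell$ endpoint-$2$ isomorphism classes. Lemma~\ref{lem:q4,p,q}, which shows that the entire spectrum of $\Delta_2(x)$ is determined by $p$ and $q$, guarantees that this count is well defined; however, the verification that the identification $\ell=\dim T(x)-43$ holds uniformly across all admissible parameter pairs $(p,q)$ requires the kind of case-by-case comparison of eigenvalue sources carried out in the proof of Lemma~\ref{lem:q4,p,q}.
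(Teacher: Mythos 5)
Your Wedderburn computation is exactly the paper's argument: applying Proposition~\ref{Prop:Wedderburn} to the decomposition \eqref{eq:decompV D=4} gives $\dim T(x)=5^2+3^2+3^2+m=43+m$, where $m$ is the number of isomorphism classes of (one-dimensional) irreducible $T(x)$-modules with endpoint $2$. The paper stops there and simply asserts that $m$ equals the stated $\ell$; you correctly isolate this identification as the remaining content, but you leave it open, and the bookkeeping you propose does not in fact close it. By Lemma~\ref{lem:W endpt=2 classf} the endpoint-$2$ classes are indexed by those eigenvalues of $E^*_2AE^*_2$ that are eigenvalues of $\Gamma$, whereas $\Spec(\Delta_2(x))$ also contains $a_2=pq^2$ (from the primary module) and the two values $a_1(W)$, namely $pq$ (local eigenvalue $p$) and $p+q-q^2$ (local eigenvalue $-q$), contributed by the endpoint-$1$ modules. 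From \eqref{eigvals:AT4(p,q,2)} one checks that $pq$ is never an eigenvalue of $\Gamma$ and that $p+q-q^2$ is one only when $p=q(q-2)$; generically these are two further distinct eigenvalues of $\Delta_2(x)$, so the number of distinct eigenvalues of $\Delta_2(x)$ other than $a_2$ is $m+2$, not $m$.

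A concrete check: for $J(8,4)=\mathrm{AT4}(2,2,2)$ the second subconstituent is the Cartesian product $J(4,2)\,\square\,J(4,2)$, with distinct eigenvalues $8,4,2,0,-2,-4$. Here $a_2=8$, the values $4=pq$ and $0=p+q-q^2$ are accounted for entirely by the $6+9$ endpoint-$1$ modules, and only $2,-2,-4$ arise from endpoint-$2$ modules; hence $m=3$ and $\dim T(x)=25+9+9+3=46$, while the formula $\ell+43$ with $\ell$ as literally defined gives $5+43=48$. So your hesitation at the final step is warranted: the identification of $m$ with the stated $\ell$ is precisely where both your sketch and the paper's proof are incomplete, and it fails as stated. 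The computation is rescued only if $\ell$ is read as the number of distinct eigenvalues of $E^*_2AE^*_2$ realized on the endpoint-$2$ modules, equivalently the number of $\theta_i$ $(1\leq i\leq 4)$ occurring there, which is at most $4$ rather than $7$.
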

\begin{proof}

Applying Proposition \ref{Prop:Wedderburn} to \eqref{eq:decompV D=4}, the  algebra $T(x)$ is isomorphic to 
\begin{equation*}
	\mathrm{Mat}_{5}(\mathbb{C}) \oplus
	\mathrm{Mat}_{3}(\mathbb{C}) \oplus 
	\mathrm{Mat}_{3}(\mathbb{C}) \oplus 
	\big(\mathrm{Mat}_{1}(\mathbb{C})^{\oplus \ell}\big),
\end{equation*}
where $\ell$ is the number of distinct eigenvalues of $\Delta_2(x)$ except for the  eigenvalue $a_2$. 
From \eqref{T:dim formula}, it follows that 
$$
\dim(T(x))=\ell(1)^2+ 3^2+3^2+5^2=  \ell+43.
$$
\end{proof}

\section{Concluding remarks}\label{Sec:conclusion}
In this paper, we discussed pseudo-vertex transitivity of $Q$-polynomial distance-regular graphs with small diameter $D\in \{2,3,4\}$.
We briefly summarize the results.
For the case $D=2$, we showed that the local spectrum of a strongly regular graph characterizes its pseudo-vertex transitivity.
In addition, we showed that two strongly regular graphs are $T$-isomorphic if and only if they have the same parameters and their local spectra are equal to each other.
For the case $D=3$, we were concerned with a Taylor graph $\Gamma$ with intersection array $\{k,b,1;1,b,k\}$.
We showed that $\Gamma$ is thin and that the isomorphism classes of the standard module of the Terwilliger algebra $T(x)$ of $\Gamma$ only depend on the parameters $k,b$, and hence $\Gamma$ is pseudo-vertex-transitive.
For the case $D=4$, we were concerned with an antipodal tight graph $\mathrm{AT}4(p,q,2)$.
We proved that $\mathrm{AT}4(p,q,2)$ is thin and that the isomorphism classes of the standard module of Terwilliger algebra $T(x)$ of $\mathrm{AT}4(p,q,2)$ only depend on the parameters $p,q$, and hence $\mathrm{AT}4(p,q,2)$ is pseudo-vertex-transitive.
In both cases, $D=3$ and $D=4$, the thin property of distance-regular antipodal double covers played an important role in determining their pseudo-vertex transitivity.
We also noted that bipartite and/or dual bipartite $Q$-polynomial distance-regular graphs are thin and pseudo-vertex-transitive.
We anticipate that the thinness of a distance-regular graph still plays a significant role in determining pseudo-vertex transitivity for $D\geq 3$.
We give the following conjecture.
\begin{conjecture}
A thin $Q$-polynomial distance-regular graph with diameter $D\geq 3$ is pseudo-vertex-transitive.
\end{conjecture}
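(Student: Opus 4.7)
The plan is to reduce the conjecture to two independent sub-problems: (a) the list of isomorphism classes of thin irreducible $T(x)$-modules appearing in $V$ is independent of the base vertex $x$; and (b) the multiplicity with which each such isomorphism class appears in $V$ is independent of $x$. Granted both, for any $x,y \in X$ one constructs a $\mathbb{C}$-vector space isomorphism $\rho:V\to V$ that matches the isotypic components in the $T(x)$-decomposition of $V$ with those in the $T(y)$-decomposition, and carries a standard basis of each $T(x)$-summand to a standard basis of its matched $T(y)$-summand. Lemma \ref{lem:[A]_B=[A']_B'} then certifies the intertwining property demanded in Definition \ref{pvt}.

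For (a), I would exploit the fact, recalled around \eqref{scalars:ai,wi}--\eqref{eq:action Av_i}, that a thin irreducible $T(x)$-module $W$ with endpoint $r$, dual endpoint $t$, and diameter $d$ is determined up to $T$-module isomorphism by the triple $(r,t,d)$ together with the scalars $\{a_i(W)\}_{i=0}^{d}$ and $\{x_i(W)\}_{i=1}^{d}$, since these fix the matrix of $A$ in a standard basis while $A^*(x)$ acts on that basis by the intrinsic scalars $\theta^*_{r+i}$. Applying thinness in the dual picture as well and combining \eqref{action: 3-term} with the $Q$-polynomial recurrences, one expresses $a_i(W)$ and $x_i(W)$ as rational functions of the intersection numbers, Krein parameters, and the triple $(r,t,d)$. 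Consequently the possible isomorphism classes of thin irreducible $T(x)$-modules form a finite list indexed by invariants of $\Gamma$ alone.

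The heart of the argument, and what I expect to be the main obstacle, is (b). For the primary module the multiplicity is $1$ for every $x$, and at the opposite extreme the total dimension $|X|$ constrains the multiplicities linearly. In between, however, the multiplicities are a priori functions of local spectral data that may depend on $x$; Example \ref{ex:chang} already shows that even the local spectrum can vary with $x$ in the $D=2$ case. The approach is to write down, for each tuple $(i_0,i_1,\ldots,i_k)$, the global trace $\mathrm{trace}(E^*_{i_0}AE^*_{i_1}A\cdots AE^*_{i_k})$, which counts closed walks and is therefore determined by the intersection numbers of $\Gamma$, expand it over isotypic $T(x)$-components using the explicit matrices from (a), and view the result as a linear system in the multiplicities $\mathrm{mult}_x(r,t,d)$. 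If this system admits a unique solution, the multiplicities are automatically base-vertex-free and the conjecture follows.

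The main difficulty is proving non-degeneracy of this trace system in sufficient generality; already in the bipartite $Q$-polynomial case this is essentially Caughman's Conjecture 15.8 (compare Remark \ref{remark:thinness}), and the twisted Grassmann graph recalled in the introduction shows that the pseudo-vertex-transitivity conclusion genuinely fails without the thin hypothesis. As intermediate steps I would first test the argument on the dual bipartite $Q$-polynomial families, where explicit identities make the trace system tractable, and cross-check against the Taylor graphs of Section \ref{sec:D=3} and the $\mathrm{AT}4(p,q,2)$ family of Section \ref{sec:D=4}, whose multiplicities in \eqref{eq:decompV D=3} and \eqref{eq:decompV D=4} were shown to be polynomial in the intersection parameters. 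A structural alternative, should the trace approach fall short, would be to realize $T(x)$ as a quotient of a universal tridiagonal-type algebra whose irreducible modules are classified intrinsically, so that the isomorphism class of $V$ as a $T(x)$-module is forced by its character and therefore by intersection-number data alone.
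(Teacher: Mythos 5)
The statement you are addressing is posed in the paper as an open conjecture (a companion to the Ito--Koolen Problem \ref{IK problem}); the paper offers no proof of it and only verifies the conclusion for particular families (strongly regular graphs with constant local spectrum, Taylor graphs, and $\mathrm{AT}4(p,q,2)$). Your submission is likewise not a proof but a program, and you say so yourself; the question is whether the gaps you flag are the real ones. They are, but there is an additional gap in your step (a) that you treat as settled. For a thin irreducible $T(x)$-module $W$ of endpoint $1$, the scalar $a_0(W)$ is a local eigenvalue of $\Gamma$ at $x$, i.e.\ an eigenvalue of the local graph $\Delta(x)$, and (cf.\ Lemma \ref{Thm11.7:TerGo}) two such modules are isomorphic precisely when their local eigenvalues agree. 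The local spectrum is \emph{not} a function of the intersection numbers and Krein parameters alone: Lemma \ref{pvt=>Spec(D1)=Spec(D2)} makes constancy of the local spectra over $X$ a \emph{necessary} condition for pseudo-vertex-transitivity, and Example \ref{ex:chang} exhibits thin graphs (the Chang graphs, $D=2$) in which the local spectrum genuinely varies with the base vertex. So your claim that ``the possible isomorphism classes form a finite list indexed by invariants of $\Gamma$ alone'' is exactly the content one must prove for $D\geq 3$, not a consequence of the three-term recurrences; the triple $(r,t,d)$ does not determine the isomorphism class in general, since non-isomorphic endpoint-$1$ modules with distinct local eigenvalues can share all three.

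Your step (b) is the second genuine obstruction, and you identify it correctly: even granting a vertex-independent list of isomorphism classes, the multiplicities need not be vertex-independent --- the paper's remark on the twisted Grassmann graph following Definition \ref{pvt} records that its thin irreducible modules with endpoint $1$ at vertices of one orbit have the same isomorphism classes as those of the Grassmann graph but different multiplicities --- and the non-degeneracy of your trace system is unproven; in the (dual) bipartite case this is essentially Caughman's Conjecture 15.8, which the paper cites as open in Remark \ref{remark:thinness}. In short, your outline is a reasonable research plan that matches the paper's own framing of why the problem is hard, but the two places where your argument says ``if'' are precisely where the conjecture lives, and neither you nor the paper closes them.
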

As we saw in Problem \ref{IK problem}, the Ito-Koolen problem asks to classify all thin pseudo-vertex-transitive $Q$-polynomial distance-regular graphs with large enough diameter.
As a first step towards solving the Ito-Koolen problem, Tan \textit{et al.} \cite{2020TKCP} found an upper bound of the intersection number $c_2$ and parameter $\alpha$, respectively, for a thin $Q$-polynomial distance-regular graph with classical parameters $(D, b, \alpha, \beta)$. 
Each upper bound is given by a function in $b$, where $b=b_1/(\theta_1+1)$ and where $\theta_1$ is the second largest eigenvalue of $\Gamma$.
We give some suggestions for further research.
\begin{problem}
With reference to the result of \cite{2020TKCP}, improve the upper bound on $c_2$.
\end{problem}
\begin{problem}
Assume $\Gamma$ has the same parameters as a Grassmann graph $J_q(2D+1, D)$ and that $\Gamma$ is thin. 
Prove or disprove that $\Gamma$ and $J_q(2D+1, D)$ are $T$-isomorphic.
We remark that Liang \textit{et al.} \cite{TKL} have proved this problem for large enough diameter $D$.
\end{problem}

\section*{Acknowledgements}
The authors would like to express many thanks to the anonymous referees for their corrections and valuable comments.
J.-H. Lee would like to thank Paul Terwilliger for several valuable conversations and comments.
J.H. Koolen is partially supported by the National Key R and D Program of China (No. 2020YFA0713100), the National Natural Science Foundation of China (No. 12071454), and the Anhui Initiative in Quantum Information Technologies (No. AHY150000). 
Y.-Y Tan is supported by the National Natural Science Foundation of China (No.~11801007,~12171002) and Natural Science Foundation of Anhui Province (No. 1808085MA17) and the  foundation of Anhui Jianzhu University (No. 2018QD22).


\begin{thebibliography}{90}
\bibitem{1989BCN} 
	A. E. Brouwer, A. M. Cohen and A. Neumaier, 
	Distance-Regular Graphs,
	Springer-Verlag, Berlin, 1989.

\bibitem{bfk} 
	S. Bang, T. Fujisaki and J. H. Koolen, 
	The spectra of the local graphs of the twisted Grassmann graphs, 
	{\it European J. Combin.} \textbf{30(3)} (2009) 638-654.

\bibitem{bi} 
	E. Bannai and T. Ito, 
	Algebraic Combinatorics I: Association Schemes, 
	Benjamin-Cummings, Menlo Park, 1984.


\bibitem{1999CaughmanIV}
	J. S. Caughman IV, 
	The Terwilliger algebra of bipartite $P$- and $Q$- polynomial schemes,
	\textit{Discrete Math.}, \textbf{196} (1999),  65--95. 



\bibitem{2010Cerzo} 
	D. R. Cerzo, 
	Structure of thin irreducible modules of a $Q$-polynomial distance- regular graph, 
	\textit{Linear Algebra Appl.} \textbf{433} (2010), 1573--1613.

\bibitem{1962CurtisReiner} C. W. Curtis and I. Reiner, 
	Representation Theory of Finite Groups and Associative Algebras, 
	Pure and Applied Mathematics, Vol. XI, Interscience Publishers, a division of John Wiley \& Sons, New York-London, 1962.
	

\bibitem{2005vDK} 
	E. R. van Dam and J. H. Koolen, 
	A new family of distance-regular graphs with unbounded diameter, 
	\textit{Invent. Math.} \textbf{162} (2005) 189--193.

\bibitem{dkt}  
	E. R. van Dam, J. H. Koolen and H. Tanaka, 
	Distance-regular graphs, 
	\textit{Electron. J. Combin.} (2016) $\sharp$DS22.


\bibitem{1996DicTer} 
	G. Dickie and P. Terwilliger,
	Dual bipartite $Q$-polynomial distance-regular graphs,
	\textit{European J. Combin.} \textbf{17} (1996) 613--623.

\bibitem{2000Egge}
	E. Egge,
	A generalization of the Terwilliger algebra,
	\textit{J. Algebra} \textbf{233} (2000), 213--252.

\bibitem{2002GoTer} 
	J. T. Go and P. Terwilliger, 
	Tight distance-regular graphs and the subconstituent algebra, 
	\textit{European J. Combin.} \textbf{23} (2002), no. 7, 793--816.

\bibitem{god} 
	C. D. Godsil and G. Royle, 
	\emph{Algebraic Graph Theory}, 
	Springer-Verlag, Berlin, 2001.


\bibitem{2003Jurisic} 
	A. Juri\v{s}i\'{c}, 
	AT4 family and 2-homogeneous graphs, 
	\textit{Discrete Math.}, \textbf{264} (2003),  127--148. 

\bibitem{2000JKT} 
	A. Juri\v{s}i\'{c}, J. H. Koolen and P. Terwilliger, 
	Tight distance-regular graphs, 
	\textit{J. Algebra Combin.} \textbf{12(2)}(2000) 163-197.

\bibitem{2000JKEuroJC}
	A. Juri\v{s}i\'{c} and J. H. Koolen, 
	Nonexistence of some Antipodal Distance-regular Graphs of Diameter Four,
	\textit{European J. Combin.} \textbf{21} (2000), no. 8, 1039--1046.

\bibitem{2002JKDiscMath} 
	A. Juri\v{s}i\'{c} and J. H. Koolen, 
	Krein parameters and antipodal tight graphs with diameter $3$ and $4$,
	\textit{Discrete Math.}, \textbf{244}(1--3) (2002) 181--202.


\bibitem{TKL} 
	X. Liang, Y.-Y Tan and J. H. Koolen,
	Thin distance-regular graphs with classical parameters $(D, q, q, \frac{q^t -1}{q-1})$ with $t > D$ are the Grassmann graphs, 
	\textit{Electron. J. Combin.} (2021), $\sharp$P4.45.
		
\bibitem{2001Pascasio}
	A. A. Pascasio,
	Tight distance-regular graphs and the $Q$-polynomial property,
	Graphs Combin. \textbf{17} (2001), 149--169.

\bibitem{2001PascasioLAA}
	A. A. Pascasio,
	An inequality on the cosines of a tight distance-regular graph,
	Linear Algebra Appl. \textbf{325} (2001), 147--159.

\bibitem{pt} 
	S. E. Payne and J. A. Thas, 
	{\it Finite generalized quadrangles: Second Edition}, 
	EMS Series of Lectures in Mathematics, European Mathematical Society (EMS), Z$\ddot{u}$rich, 2009.

\bibitem{Spence} E. Spence,
	Regular Two-graphs, \url{http://www.maths.gla.ac.uk/~es/twograph/reg2Graph.php}

 
 \bibitem{2020TKCP}
 	{Y.-Y Tan, J. H. Koolen, M.-Y Cao and J. Park},
	 Thin $Q$-polynomial distance-regular graphs have bounded $c_2$,
	 \textit{preprint}.


\bibitem{2020TanakaWang} 
	{H. Tanaka and T. Wang},
	The Terwilliger algebra of the twisted Grassmann graph: the thin case,
	\textit{Electron. J. Combin.} \textbf{28} (2020) \#P4.15 (22pp.)

\bibitem{1988Ter}
	P. Terwilliger,
	Balanced sets and $Q$-polynomial association schemes, 
	\textit{Graphs Combin.} \textbf{4} (1988) 87--94. 

\bibitem{ter1} 
	P. Terwilliger, 
	The subconstituent algebra of an association scheme I, 
	\textit{J. Algebra Combin.} \textbf{1(4)}(1992) 363--388.

\bibitem{ter3} 
	P. Terwilliger, 
	The subconstituent algebra of an association scheme III, 
	\textit{J. Algebra Combin.} \textbf{2(2)}(1993) 177--210.

\bibitem{2yama} 
	M. Tomiyama and N. Yamazaki, 
	The subconstituent algebra of a strongly regular graph, 
	\textit{Kyushu J. Math.} \textbf{48(2)}(1994) 323--334.

\bibitem{2022XLK}
	Z.-J. Xia, J.-H. Lee and J. H. Koolen,
	A new feasible condition for the AT4 family,
	\textit{submitted}.
\end{thebibliography}
\end{document}